\xpatchcmd\swappedhead{~}{.~}{}{}
\theoremstyle{plain}
\newtheorem{FactCounter}{dummy}[section]
\newtheorem{Theorem}[FactCounter]{Theorem} %
\newtheorem{Proposition}[FactCounter]{Proposition} %
\newtheorem{Lemma}[FactCounter]{Lemma} %
\newtheorem{Corollary}[FactCounter]{Corollary} %
\theoremstyle{definition}
\theoremstyle{remark}
\newtheorem{Example}[FactCounter]{Example} %
\DeclareMathOperator{\ad}{ad}
\DeclareMathOperator{\gl}{\mathfrak{gl}}
\DeclareMathOperator{\fsl}{\mathfrak{fsl}}
\DeclareMathOperator{\Sl}{\mathfrak{sl}}
\DeclareMathOperator{\im}{Im}
\DeclareMathOperator{\id}{id}
\DeclareMathOperator{\GF}{GF}
\DeclareMathOperator{\Der}{Der}
\DeclareMathOperator{\Ind}{Ind}
\begin{document}
\title{Irreducible $L$-modules in Modular Lie Algebras}

\author{Eun H. Park}

\school{Mathematics}
\faculty{Science and Engineering}

\beforeabstract

The main objective of this project is to determine all irreducible modules of a given modular Lie algebra. In contrast to ordinary Lie algebras, modular Lie algebras require an additional structure known as the $p$-mapping. The minimal $p$-envelope of a modular Lie algebra is restrictable, and there exists a one-to-one correspondence between the induced modules and certain original modules. By exploiting the properties of induced modules, this project aims to decompose a modular Lie algebra $L$ into irreducible $L$-modules. Several examples will be presented to demonstrate how such decompositions can be achieved for specific modular Lie algebras.

\afterabstract

\prefacesection{Acknowledgements}
I am deeply thankful to Sons Hub, the student support centre, for all their help and encouragement, and to my friends and colleagues for their constant support. I would also like to express my sincere gratitude to Dr. David Stewart for suggesting this topic for my main project and for his valuable guidance throughout my work.

\afterpreface


\chapter{Introduction}

In Chapter 2, the concepts of restricted Lie algebras and restrictable Lie algebras are introduced. The notions of the $p$-mapping and $p$-semilinearity, together with some theoretical preliminaries relevant to the project, will be discussed in this chapter.

In Chapter 3, we examine the fact that every restricted Lie algebra possesses a restricted enveloping algebra. This chapter will cover the properties of $p$-envelopes, restricted enveloping algebras, and universal $p$-envelopes.

In Chapter 4, the concept of an induced $L$-module is introduced. This chapter will explore the one-to-one correspondence between induced modules and certain submodules of the original modules.

In Chapter 5, we consider representations of modular Lie algebras, which possess an invariant called a character—a linear form on the algebra. Given a character $S$, the construction of reduced enveloping algebras will be presented.

Finally, using these theoretical foundations, we will decompose each given modular Lie algebra $L$ into irreducible $L$-modules. Through detailed examples, we will demonstrate how all irreducible $L$-modules can be obtained via case-by-case analysis.

\chapter{Restricted Lie Algebras and Restrictable Lie Algebras}
\chaptermark{Restricted Lie Algebras} 
We simply write the Lie multiplication $xy$ for any $x,y$ in a Lie algebra $L$, instead of $[x,y]$. The theorems and lemmas refer to \cite[Section 3, Chapter 2]{SF}, \cite[Chapter 2]{N}, \cite{F} and \cite{H}.

Let $L$ be a Lie algebra over $F$ of $\operatorname{char} p$. A mapping $[p]: L \rightarrow L$ defined by $x \mapsto x^{[p]}$ is called a \textit{$p$-mapping}\index{p-mapping} if
\begin{enumerate}\label{pmap1}
\item[(1)]
$\ad a^{[p]} = (\ad a)^p$ \; for all $a \in L$.
\item[(2)]
$(\alpha a)^{[p]} = \alpha^p a^{[p]}$ \; for all $\alpha \in F$ and $a \in L$.
\item[(3)]
$(a + b)^{[p]} = a^{[p]} + b^{[p]} + \sum^{p-1}_{i=1} s_i(a, b)$ where $(\ad(a \otimes X + b \otimes 1))^{p-1}(a \otimes 1) = \sum^{p-1}_{i=1} i s_i(a, b) \otimes X^{i-1}$ in $L \otimes_F F[X]$ for all $a, b \in L$.
\end{enumerate}
A pair $(L, [p])$ consisting of a Lie algebra $L$ and a $p$-mapping $[p]$ is called a \textit{restricted Lie algebra}\index{restricted Lie algebra}.
$f:L\rightarrow L$ is said to be \textit{$p$-semilinear}\index{$p$-semilinear} if $f$ satisfies the property $f(ax+y)=a^pf(x)+f(y)$ for all $x,y\in L$ and $a\in F$. Let $(L, [p])$ is a restricted Lie algebra over $F$. A subalgebra $H\subset L$ is called a \textit{$p$-subalgebra}\index{$p$-subalgebra} if $x^{[p]}\in H$ for all $x\in H$. An ideal $I$ of $L$ is called a \textit{$p$-ideal}\index{$p$-ideal} if $x^{[p]}\in I$ for all $x\in I$. Let $S\subset L$ be a subset of a restricted Lie algebra $(L, [p])$. Define $S_p$ as $\cap H$ where the intersection is over all $p$-subalgebras $H$ containing $S$. Then $S_p$ is a $p$-subalgebra and is the smallest $p$-subalgebra of $(L, [p])$ containing $S$. $S_p$ is called the \textit{$p$-subalgebra generated by $S$ in $L$}\index{$p$-subalgebra generated by $S$}. Let $(L_1, [p]_1)$ and $(L_2, [p]_2)$ be restricted Lie algebras over $F$. A mapping $f:L_1\rightarrow L_2$ is said to be \textit{restricted}\index{restricted homomorphism} or is called \textit{$p$-homomorphism}\index{$p$-homomorphism} if $f$ is a homomorphism of Lie algebras and $f(x^{[p]_1})={f(x)}^{[p]_2}$ for all $x\in L$. A representation $\rho:L\rightarrow \gl(V)$ is said to be \textit{restricted}\index{restricted representation} if $\rho(x^{[p]})={\rho(x)}^p$ for all $x\in L$.

The given restricted Lie algebra $L$ in the following proposition \cite[Chapter 1]{SF} is not necessarily a $p$-subalgebra of $(G, [p])$.
\begin{Proposition}\label{p1pmappingpsemilinear}
Let $L$ be a subalgebra of a restricted Lie algebra $(G, [p])$ and $[p]_1: L \rightarrow L$ be a mapping. Then the following statements are equivalent:
\begin{enumerate}
\item[(1)]
$[p]_1$ is a $p$-mapping on $L$.
\item[(2)]
There exists a $p$-semilinear mapping $f: L\rightarrow C_G(L)$ such that $[p]_1=[p]+f$.
\end{enumerate}
\begin{proof}
Assume that $[p]_1$ is a $p$-mapping on $L$. Consider $f: L\rightarrow G$ defined by $f(x)=x^{[p]_1}-x^{[p]}$. As $(\ad f(x))(y)=0$ for all $x,y\in L$, $f$ maps $L$ into $C_G(L)$. For $x,y\in L$ and $a\in F$,
\begin{align*}
f(ax+y)&=a^px^{[p]_1}+y^{[p]_1}+\sum_{i=1}^{p-1}s_i(ax,y)-a^px^{[p]}-y^{[p]}-\sum_{i=1}^{p-1}s_i(ax,y)\\
&=a^pf(x)+f(y).
\end{align*}
This shows that $f$ is $p$-semilinear.

Conversely, as $f$ is $p$-semilinear, it remains to show that the sum of any two elements satisfies the third property of the definition. That is, for $x,y\in L$,
\begin{align*}
(x+y)^{[p]_1}&=(x+y)^{[p]}+f(x+y)\\
 & =x^{[p]}+f(x)+y^{[p]}+f(y)+\sum_{i=1}^{p-1}s_i(x,y)\\
 & = x^{[p]_1}+y^{[p]_1}+\sum_{i=1}^{p-1}s_i(x,y).
\end{align*}
\end{proof}
\end{Proposition}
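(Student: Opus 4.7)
The plan is to exploit the observation that the polynomials $s_i(a,b)$ appearing in axiom~(3) of a $p$-mapping are defined purely in terms of the Lie bracket in $L\otimes_F F[X]$, so the same family of $s_i$ serves every $p$-mapping on $L$. This suggests that the difference $f(x):=x^{[p]_1}-x^{[p]}$ should be much better behaved than either summand alone, and it is the natural candidate to realise the equivalence.

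For $(1)\Rightarrow(2)$, I would set $f(x):=x^{[p]_1}-x^{[p]}$ and first argue that $f$ lands in $C_G(L)$. Applying axiom~(1) to both $p$-mappings gives $\ad(x^{[p]_1})=(\ad x)^p=\ad(x^{[p]})$ as operators on $L$ inside $G$, so $\ad f(x)$ annihilates $L$. To check $p$-semilinearity, I would expand $f(ax+y)$ by applying axioms~(2) and~(3) to each of $(ax+y)^{[p]_1}$ and $(ax+y)^{[p]}$; the common $s_i(ax,y)$ contributions cancel, leaving precisely $a^p f(x)+f(y)$.

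For $(2)\Rightarrow(1)$, I would verify the three axioms for $[p]_1=[p]+f$ in turn. Axiom~(1) is immediate because $f(x)\in C_G(L)$ gives $\ad f(x)=0$ on $L$, so $\ad(x^{[p]_1})=\ad(x^{[p]})=(\ad x)^p$. Axiom~(2) follows by splitting $(\alpha x)^{[p]_1}=(\alpha x)^{[p]}+f(\alpha x)$ and using the corresponding property of $[p]$ together with the $p$-semilinearity of $f$. Axiom~(3) is the main bookkeeping step, but it reduces to adding the $[p]$-version of axiom~(3) to the identity $f(x+y)=f(x)+f(y)$ (the case $\alpha=1$ of $p$-semilinearity) and invoking once more that the $s_i(x,y)$ are intrinsic to the bracket.

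The only substantive obstacle is conceptual rather than computational: one must recognise at the outset that the $s_i(a,b)$ are determined by the Lie bracket of $L$ alone, so they are shared between $[p]$ and $[p]_1$. Once this is noted, the two directions become mirror images of one another and nothing beyond careful symbol-pushing remains.
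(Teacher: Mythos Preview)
Your proposal is correct and follows essentially the same route as the paper: define $f(x)=x^{[p]_1}-x^{[p]}$, use axiom~(1) for both $p$-mappings to land in $C_G(L)$, cancel the common $s_i$-terms to obtain $p$-semilinearity, and run the computation in reverse for the converse. The only difference is cosmetic: you spell out the verification of axioms~(1) and~(2) in the direction $(2)\Rightarrow(1)$, whereas the paper treats them as immediate and records only axiom~(3).
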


$L$ is restrictable if and only if there exists a mapping $[p]:L\rightarrow L$ which makes $L$ a restricted algebra. A Lie algebra $L$ is said to be \textit{restrictable}\index{restrictable} if $\ad (L)$ is a $p$-subalgebra of $\Der(L)$, that is, $(\ad x)^p\in \ad(L)$ for all $x\in L$.

\begin{Proposition}\label{L1restL2rest}
Let $f: L_1 \rightarrow L_2$ be a surjective homomorphism of Lie algebras. If $L_1$ is restrictable, then $L_2$ is restrictable.
\end{Proposition}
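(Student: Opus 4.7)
The plan is to use the characterization of restrictability stated just before the proposition: $L_2$ is restrictable if and only if $(\ad y)^p \in \ad(L_2)$ for every $y \in L_2$. So I would fix an arbitrary $y \in L_2$ and try to exhibit an element of $L_2$ whose adjoint realizes $(\ad y)^p$.

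First, since $f$ is surjective, choose $x \in L_1$ with $f(x) = y$. Because $L_1$ is restrictable, $(\ad x)^p$ is inner on $L_1$, so there exists $z \in L_1$ with $(\ad x)^p = \ad z$ in $\Der(L_1)$. The candidate for the required element of $L_2$ is then $f(z)$, and the claim to verify is
\[
(\ad y)^p = \ad f(z) \quad \text{in } \End(L_2).
\]

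The key identity that makes this work is that a Lie algebra homomorphism intertwines adjoint actions: for any $a,b \in L_1$ one has $f(\ad a \cdot b) = f(ab) = f(a)f(b) = \ad f(a) \cdot f(b)$. By a straightforward induction this upgrades to $f \circ (\ad a)^k = (\ad f(a))^k \circ f$ for every $k \ge 0$, in particular for $k = p$. Applied to $a = x$, this gives $(\ad f(x))^p \circ f = f \circ (\ad x)^p = f \circ \ad z = \ad f(z) \circ f$. Now for any $w \in L_2$, surjectivity of $f$ yields $v \in L_1$ with $f(v) = w$, and the displayed chain evaluated on $v$ gives $(\ad y)^p(w) = \ad f(z) \cdot w$, completing the verification.

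There is no real obstacle here; the only thing that even needs a moment's thought is the intertwining identity $f \circ (\ad x)^p = (\ad f(x))^p \circ f$, and surjectivity of $f$ is used precisely twice—once to lift $y$ to $x$ at the outset, and once at the end to reduce the endomorphism identity on $L_2$ to an identity that can be checked after applying $f$.
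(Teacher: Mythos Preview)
Your argument is correct and is essentially the approach intended by the paper, only spelled out in full: the paper's proof is the one-line remark that ``$f$ preserves the structure of $L_1$ to $L_2$, so $f(L_1)=L_2$ is restrictable,'' and your write-up supplies exactly the computation behind that sentence. In particular, your use of the intertwining identity $f\circ(\ad x)^p=(\ad f(x))^p\circ f$ together with surjectivity is the precise content of ``$f$ preserves the structure.''
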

\begin{proof}
As $f$ preserves the structure of $L_1$ to $L_2$, $f(L_1)=L_2$ is restrictable.
\end{proof}

\subsection{The Jordan-Chevalley decomposition}

One of important decompositions in Lie theory is \textit{Jordan–Chevalley decomposition} which is stated in the below and proved in Theorem \ref{jordandecomp}. The following Lemma refers to  \cite[Lemma 2.3.1]{SF}

\begin{Lemma}\label{kerf=0}
Let $V$ and $W$ be $F$-vector spaces and $f:V\rightarrow W$ be a $p$-semilinear mapping. Then the followings hold:
\begin{enumerate}
\item[(1)]
$\dim_F V=\dim_F \ker(f) +\dim_{F^p} f(V)$.
\item[(2)]
If $\langle f(V) \rangle =W$ and $\dim_F W=\dim_F V$, then $\ker(f)=0$.
\end{enumerate}
\end{Lemma}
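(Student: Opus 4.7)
The plan is to exploit $p$-semilinearity to see the kernel as an $F$-subspace and the image as an $F^p$-subspace, and then transfer a basis from a complement of the kernel to the image.

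First I would verify the basic closure properties. The kernel is an $F$-subspace of $V$: if $f(x) = 0$ then $f(ax) = a^p f(x) = 0$. The image $f(V)$ is closed under addition and under multiplication by elements of $F^p$, because $a^p f(x) = f(ax)$; so $f(V)$ is naturally an $F^p$-subspace of $W$, which justifies the appearance of $\dim_{F^p}$ in the statement.

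For part (1), I would choose an $F$-linear complement $U$ of $\ker(f)$ in $V$, so that $V = \ker(f) \oplus U$ and $\dim_F U = \dim_F V - \dim_F \ker(f)$. The restriction $f|_U \colon U \to f(V)$ is $p$-semilinear, surjective onto $f(V)$ (since kernel elements contribute nothing), and injective (its kernel is $\ker(f) \cap U = 0$). Taking an $F$-basis $\{e_i\}$ of $U$, I would check that $\{f(e_i)\}$ is an $F^p$-basis of $f(V)$: any element of $f(V)$ has the form $f(\sum a_i e_i) = \sum a_i^p f(e_i)$, giving $F^p$-spanning; and if $\sum b_i f(e_i) = 0$ with $b_i = a_i^p \in F^p$, then $f(\sum a_i e_i) = 0$ forces $\sum a_i e_i \in \ker(f) \cap U = 0$, hence each $b_i = 0$. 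This yields $\dim_{F^p} f(V) = \dim_F U$, which rearranges to the stated equality.

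For part (2), I would observe that an $F^p$-basis of $f(V)$ is automatically an $F$-spanning set of the $F$-span $\langle f(V) \rangle = W$, so $\dim_F W \leq \dim_{F^p} f(V)$. Combined with part (1) and the hypothesis $\dim_F W = \dim_F V$, this gives
\[
\dim_F V = \dim_F W \leq \dim_{F^p} f(V) = \dim_F V - \dim_F \ker(f),
\]
forcing $\dim_F \ker(f) = 0$, i.e.\ $\ker(f) = 0$. The only subtle point—and the main place to be careful—is remembering that $f$ is not $F$-linear, so the image lives in the $F^p$-category while the kernel lives in the $F$-category; everything else is bookkeeping via the Frobenius twist $a \mapsto a^p$.
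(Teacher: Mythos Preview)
Your proposal is correct and follows essentially the same route as the paper. The only cosmetic difference is in part~(1): the paper equips $W$ with a twisted $F$-action $\alpha\cdot w:=\alpha^p w$, so that $f$ becomes genuinely $F$-linear and the first isomorphism theorem gives $V/\ker f\cong f(V)$ directly, whereas you carry out the equivalent basis-transfer by hand on a complement of $\ker(f)$; part~(2) is identical in both.
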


\begin{proof}
\begin{enumerate}
\item[(1)]
$W$ has the structure of an $F$-vector space via $\alpha \cdot w \coloneqq \alpha^p w$ for all $\alpha\in F$ and $w\in W$. The subspaces of $W$ under this structure are exactly the $F^p$-subspaces of $W$, so $\dim_F f(V)=\dim_{F^p} f(V)$. As $f:V \rightarrow W$ is linear with respect to the $\cdot$-structure, $V/\ker f \cong f(V)$ by the first isomorphism theorem. By considering the dimensions of the modules of the both sides, This shows $(1)$.

\item[(2)]
Since every $F^p$-basis of $f(V)$ is a generating set for the $F$-space $\langle f(V)\rangle$, $\dim_F\langle f(V)\rangle \leq \dim_{F^p} f(V)$. Let $\langle f(V) \rangle =W$ and $\dim_F W=\dim_F V$. Then
\begin{align*}
\dim_F W & = \dim_F V\\
& = \dim _F \ker (f) + \dim_{F^p} f(V) \\
& =\dim_F \ker(f) + \dim_F \langle f(V)\rangle \\
& = \dim_F \ker(f) + \dim_F W
\end{align*}
 and this shows $\dim_F \ker(f)=0$.
\end{enumerate}
\end{proof}

\begin{Lemma}\label{p-semilinear<f(v)>=V}
Let $f:V\rightarrow V$ be a $p$-semilinear. Then the following statements are equivalent.
\begin{enumerate}
\item[(1)]
$\langle f(V) \rangle =V$.
\item[(2)]
For every $v\in V$, there exist elements $\alpha_1,\cdots, \alpha_n$ in $F$ such that $v=\sum_{i=1}^n \alpha_i f^i(v)$.
\end{enumerate}
\end{Lemma}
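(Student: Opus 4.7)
The plan is to prove the two implications separately. The direction $(2)\Rightarrow(1)$ is immediate: the hypothesis places each $v\in V$ in the $F$-span of $\{f^i(v):i\geq 1\}\subseteq f(V)$, so $V\subseteq\langle f(V)\rangle$, and equality follows.

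For the substantive direction $(1)\Rightarrow(2)$, my approach is to work in the quotient $V/W_v$, where $W_v:=\langle f(v),f^2(v),\ldots\rangle_F$, and show that $\bar v=0$. First I would verify that $W_v$ is $f$-stable: for any $w=\sum_{i\geq 1}\beta_i f^i(v)\in W_v$, $p$-semilinearity of $f$ gives $f(w)=\sum_i\beta_i^p f^{i+1}(v)\in W_v$. Hence $f$ descends to a $p$-semilinear map $\bar f:V/W_v\to V/W_v$. Next, since the quotient map $\pi:V\to V/W_v$ is $F$-linear,
\[
\langle\bar f(V/W_v)\rangle=\langle\pi(f(V))\rangle=\pi(\langle f(V)\rangle)=\pi(V)=V/W_v,
\]
so $\bar f$ inherits the hypothesis of Lemma~\ref{kerf=0}(2) (the domain and codomain having the same $F$-dimension) and therefore $\ker(\bar f)=0$. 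Since $f(v)\in W_v$ by definition, $\bar f(\bar v)=\overline{f(v)}=0$, which forces $\bar v=0$, i.e.\ $v\in W_v$; this produces the required expression $v=\sum_{i=1}^n\alpha_i f^i(v)$.

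The main conceptual step is the reformulation via the quotient $V/W_v$; once made, the remainder consists only of routine checks (the $f$-stability of $W_v$ and the behaviour of $\langle\cdot\rangle$ under the $F$-linear projection $\pi$) together with a single invocation of Lemma~\ref{kerf=0}(2). A more hands-on alternative would be to take a minimal linear dependence $f^m(v)=\sum_{i=0}^{m-1}c_i f^i(v)$ and argue by contradiction that $c_0\neq 0$; this works too, but is more delicate since it apparently requires extracting $p$-th roots of the scalars $c_i$ and so forces perfectness of $F$, whereas the quotient argument sidesteps this subtlety entirely.
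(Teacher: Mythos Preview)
Your proof is correct and follows essentially the same route as the paper's: both define the $f$-invariant subspace $U=\sum_{i\geq 1}Ff^i(v)$ (your $W_v$), pass to the quotient $V/U$, observe that the induced $p$-semilinear map still satisfies $\langle\bar f(V/U)\rangle=V/U$, and then apply Lemma~\ref{kerf=0}(2) to conclude from $\bar f(\bar v)=0$ that $v\in U$. Your write-up is in fact more explicit about the $f$-stability check and the image computation than the paper's, but the argument is the same.
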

\begin{proof}
It is clear that if (2), then $\langle f(V) \rangle =V$. Conversely, let $v\in V$ and define $U \coloneqq \sum_{i \geq 1} Ff^i(v)$. Then $U$ is an $f$-invariant subspace of $V$. $f$ induces a $p$-semilinear map $\bar{f}: V/U \rightarrow V/U$ defined by $x+U \mapsto f(x)+U$ for all $x\in V$, which also satisfies $\langle \bar{f}(V/U)\rangle =V/U$. Note that $v+U \in \ker(\bar{f})$. By Lemma \ref{kerf=0} (2), $\ker (\bar{f})=0$ and thus $v\in U$.
\end{proof}

Let $(L, [p])$ be a restricted Lie algebra over $F$. An element $x\in L$ is said to be \textit{semisimple}\index{semisimple element} if $x\in (Fx^{[p]})_p$. An element $x\in L$ is said to be \textit{toral}\index{toral} if $x^{[p]}=x$.

The following proposition refers to  \cite[Lemma 2.3.3]{SF}

\begin{Proposition}\label{ysemisimpleinFxp}
\begin{enumerate}
\item[(1)]
If $x$ and $y$ are semisimple and $xy=0$, then $x+y$ is semisimple.
\item[(2)]
If $x$ is semisimple, then $y$ is semisimple for every $y\in (Fx)_p$.
\end{enumerate}
\end{Proposition}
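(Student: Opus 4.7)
The plan is to reduce both parts to Lemma \ref{p-semilinear<f(v)>=V} by restricting the $p$-mapping to an appropriate abelian $p$-subalgebra $A$, on which $[p]$ automatically becomes a $p$-semilinear endomorphism $f$. The reason this works is that on an abelian $p$-subalgebra the polynomials $s_i(a,b)$ vanish (since $\ad(aX+b)(a)=0$ once $[a,b]=0$), so condition (3) of the $p$-mapping reduces to $(a+b)^{[p]} = a^{[p]} + b^{[p]}$, which together with condition (2) is precisely $p$-semilinearity of $f$.

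For part (2) I would take $A = (Fx)_p$. Using condition (1) of the $p$-mapping, $\ad x^{[p^i]} = (\ad x)^{p^i}$, so all of the iterates $x, x^{[p]}, x^{[p^2]}, \ldots$ pairwise commute and hence $A$ is an abelian $p$-subalgebra equal to $\sum_{k\geq 0} F x^{[p^k]}$. Semisimplicity of $x$ identifies $(Fx^{[p]})_p$ with $\sum_{k\geq 1} F x^{[p^k]}$ and places $x$ already inside this sum, so $A = \sum_{k\geq 1} F f^k(x) \subseteq \langle f(A)\rangle$, forcing $\langle f(A)\rangle = A$. Lemma \ref{p-semilinear<f(v)>=V} applied to $f\colon A \to A$ then writes each $y \in A$ as $y = \sum_i \alpha_i f^i(y) = \sum_i \alpha_i y^{[p^i]} \in (Fy^{[p]})_p$, which is exactly the statement that $y$ is semisimple.

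For part (1) I would use the same strategy with $A = (Fx + Fy)_p$. The hypothesis $xy = 0$, combined with the same iterated-$\ad$ argument as above, forces every pair of generators $x^{[p^i]}, y^{[p^j]}$ to commute, so $A$ is again abelian and $[p]$ restricts to a $p$-semilinear $f\colon A \to A$. Semisimplicity of both $x$ and $y$ now gives $A = \sum_{k\geq 1} F x^{[p^k]} + \sum_{k\geq 1} F y^{[p^k]} \subseteq \langle f(A)\rangle$, after which Lemma \ref{p-semilinear<f(v)>=V} again applies and in particular the element $x+y \in A$ is semisimple.

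The main obstacle I anticipate is the structural bookkeeping rather than any deep idea: one must verify carefully that each of the $p$-subalgebras $(Fx)_p$ and $(Fx+Fy)_p$ really is abelian (so that the $s_i$ terms disappear and $[p]$ is $p$-semilinear on $A$), and that in this abelian setting $(Fz^{[p]})_p$ really does collapse to the $F$-span of the iterates $z^{[p^k]}$ for $k \geq 1$. Once these structural facts are in place the proof becomes essentially a direct quotation of the previous lemma.
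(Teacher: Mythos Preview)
Your approach is correct and, for part (1), essentially identical to the paper's: both set $V=(Fx+Fy)_p$, observe it is abelian so that $[p]$ is $p$-semilinear there, use semisimplicity of $x$ and $y$ to get $\langle V^{[p]}\rangle=V$, and invoke Lemma~\ref{p-semilinear<f(v)>=V}. For part (2) there is a small structural difference worth noting: you argue directly by applying Lemma~\ref{p-semilinear<f(v)>=V} to $A=(Fx)_p$, whereas the paper instead observes that each iterate $x^{[p]^i}$ is semisimple, writes $y\in(Fx)_p$ as a finite sum $\sum_i\alpha_i x^{[p]^i}$ of pairwise commuting semisimple elements, and then appeals to part (1) (iterated). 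Your route is marginally more self-contained; the paper's route has the virtue of making (2) an immediate corollary of (1). You are also more careful than the paper on one point: the paper asserts ``any $p$-mapping is $p$-semilinear'' without qualification, while you correctly isolate the abelianness of $A$ as the reason the $s_i$ terms vanish.
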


\begin{proof}
\begin{enumerate}
\item[(1)]
Define a $p$-subalgebra $V$ as $(Fx+Fy)_p$ and consider the $p$-mapping $[p]: V \rightarrow V$. Note that any $p$-mapping is $p$-semilinear. By the semisimplicity of $x$ and $y$, $x\in (Fx^{[p]})_p$ and $y\in (Fy^{[p]})_p$. This implies $x,y \in \langle V^{[p]} \rangle$. This shows $\langle V^{[p]}\rangle=V$.
\item[(2)]
If $x$ is semisimple, then $x^{[p]^i}$ is semisimple for every $i\in \mathbb{N}$. Let $y=\sum_{i\geq 0} \alpha_i x^{[p]^i}\in (Fx)_p$. Then $y$ is a sum of commuting semisimple elements. By (1), this implies that $y$ is semisimple.
\end{enumerate}
\end{proof}

\begin{Theorem}\label{x[p]ksemisimple}
Let $(L, [p])$ be a finite-dimensional restricted Lie algebra over $F$. Then, for every $x\in L$, there exists a positive integer $k\in\mathbb{N}$ such that $x^{[p]^k}$ is semisimple.
\end{Theorem}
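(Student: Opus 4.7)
The plan is to reduce to the $p$-subalgebra generated by $x$ and then apply Lemma \ref{p-semilinear<f(v)>=V} to a stable piece of it. Set $V \coloneqq (Fx)_p$. Since $\ad x^{[p]^i} = (\ad x)^{p^i}$, the elements $x^{[p]^i}$ pairwise commute, so $V$ is an abelian $p$-subalgebra of $L$, finite-dimensional because $L$ is. Restricted to an abelian $p$-subalgebra, property~(3) of the $p$-mapping collapses (all the $s_i$ vanish), so $[p]$ is additive on $V$, and combined with property~(2) this makes $[p]\colon V \to V$ a $p$-semilinear map in the sense defined earlier.

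Next I would chase a stabilisation. Consider the descending chain of $F$-subspaces
\begin{equation*}
V \;\supseteq\; \langle [p](V)\rangle \;\supseteq\; \langle [p]^2(V)\rangle \;\supseteq\; \cdots,
\end{equation*}
which must terminate since $\dim_F V < \infty$. Let $W \coloneqq \langle [p]^k(V)\rangle$ for any $k$ past the point of stabilisation. Because $[p]$ is $p$-semilinear, $[p](\langle [p]^k(V)\rangle) \subseteq \langle [p]^{k+1}(V)\rangle$, and conversely $[p]^{k+1}(V) \subseteq [p](\langle [p]^k(V)\rangle)$, whence $\langle [p]^{k+1}(V)\rangle = \langle [p](\langle [p]^k(V)\rangle)\rangle$. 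At the stable level this reads $W = \langle [p](W)\rangle$, so $[p]$ restricts to a $p$-semilinear selfmap of $W$ with $\langle [p](W)\rangle = W$.

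Now I would invoke Lemma \ref{p-semilinear<f(v)>=V}(2) applied to $[p]|_W$: every $w \in W$ admits scalars $\alpha_1,\ldots,\alpha_n \in F$ with $w = \sum_{i=1}^n \alpha_i w^{[p]^i}$. Since each $w^{[p]^i}$ lies in $(Fw^{[p]})_p$, this exhibits $w \in (Fw^{[p]})_p$, i.e.\ every element of $W$ is semisimple. Because $x^{[p]^k} \in [p]^k(V) \subseteq W$, the element $x^{[p]^k}$ is semisimple, proving the theorem.

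The only place where anything non-formal happens is checking that the chain $\langle [p]^j(V)\rangle$ really is decreasing and that its stabilisation yields $\langle [p](W)\rangle = W$; after that, the heavy lifting is entirely done by Lemma \ref{p-semilinear<f(v)>=V}. The main mild obstacle is keeping straight that $[p]$ is $p$-semilinear only after one knows $(Fx)_p$ is abelian, which is why I would spell out the commutation of the iterates $x^{[p]^i}$ at the outset.
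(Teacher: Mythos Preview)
Your argument is correct, but it is considerably more elaborate than the paper's. The paper simply observes that the sequence $x, x^{[p]}, x^{[p]^2}, \ldots$ lies in the finite-dimensional space $L$ and hence is linearly dependent; taking $k$ minimal so that $x^{[p]^k}$ falls into the span of the later iterates immediately yields $x^{[p]^k} = \sum_{i=1}^n \alpha_i\, x^{[p]^{k+i}}$, which by definition places $x^{[p]^k}$ in $(F(x^{[p]^k})^{[p]})_p$ and finishes the proof in one line. Your route---building $(Fx)_p$, stabilising the chain $\langle [p]^j(V)\rangle$, and invoking Lemma~\ref{p-semilinear<f(v)>=V}---recovers exactly the same relation for $w = x^{[p]^k}$, but with more scaffolding. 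What your approach buys is a mild bonus: you show at once that \emph{every} element of the stable piece $W$ is semisimple, not just $x^{[p]^k}$; the paper establishes that fact separately in Proposition~\ref{ysemisimpleinFxp}. Conversely, the paper's bare linear-dependence argument avoids any appeal to Lemma~\ref{p-semilinear<f(v)>=V} and does not need to verify that $(Fx)_p$ is abelian or that $[p]$ is $p$-semilinear there.
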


\begin{proof}
The family $(x^{[p]^i})_{i\geq0}$ is linearly independent. Then there exists $k\geq 0$ and $\alpha_1,\cdots, \alpha_n \in F$ such that $x^{[p]^k}=\sum_{i=1}^n \alpha_i x^{[p]^{k+i}}$. This shows that $x^{[p]^k}$ is semisimple.
\end{proof}

\paragraph{The Jordan-Chevalley decomposition}
Let $F$ is an algebraically closed field. When $F$ is perfect, then the \textit{Jordan-Chevalley decomposition} of an endomorphism $g:V\rightarrow V$ is defined as the following. If $V$ is finite-dimensional, there exist two endomorphisms $S,N:V\rightarrow V$ where $S$ is semisimple and $N$ is nilpotent such that $g=S+N$ and $[S, N]=0$.

A \textit{perfect} field $F$ is a field where every irreducible polynomial has distinct roots. All algebraically closed fields are perfect.

\begin{Theorem}\label{jordandecomp}
Let $F$ be perfect and $(L, [p])$ be a finite-dimensional restricted Lie algebra over $F$. For any $x\in L$, there exists a uniquely determined elements $x_n, x_s\in L$ where $x_n$ is $p$-nilpotent and $x_s$ is semisimple satisfying $x=x_s+x_n$ and $x_sx_n=0$.
\end{Theorem}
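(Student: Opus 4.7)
The plan is to work inside the $p$-subalgebra $V := (Fx)_p$, where the $p$-semilinear machinery of Lemmas \ref{kerf=0} and \ref{p-semilinear<f(v)>=V} directly applies. The decisive observation is that $V$ is \emph{abelian}: it is spanned by the pairwise commuting elements $x, x^{[p]}, x^{[p]^2}, \ldots$, so the restricted $p$-mapping $\pi := [p]|_V : V \to V$ is additive (the $s_i(u,v)$ terms from axiom (3) vanish whenever $[u,v]=0$) in addition to being $p$-semilinear. Because $F$ is perfect, $F^{p^i} = F$, so each iterated image $\pi^i(V)$ is in fact an $F$-subspace of $V$, not merely an $F^{p^i}$-subspace, and each $\ker \pi^i$ is automatically an $F$-subspace.

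For existence, I will produce the splitting on $V$ itself. By finite-dimensionality, the descending chain $V \supseteq \pi(V) \supseteq \pi^2(V) \supseteq \cdots$ stabilizes at some $V_s := \pi^N(V)$, and (enlarging $N$ if necessary) the ascending chain $\ker \pi^i$ stabilizes at $V_n := \ker \pi^N$. On $V_s$ the map $\pi$ is surjective, because $\pi(V_s) = \pi^{N+1}(V) = V_s$, so Lemma \ref{p-semilinear<f(v)>=V} identifies every element of $V_s$ as semisimple; elements of $V_n$ are $p$-nilpotent by construction. For $V = V_s + V_n$, given $v \in V$ I use that $\pi^N|_{V_s}:V_s\to V_s$ is surjective to pick $w \in V_s$ with $\pi^N(w) = \pi^N(v)$; then $v = w + (v - w)$ with $v - w \in V_n$. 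For $V_s \cap V_n = 0$, any semisimple $v$ has an expression $v = \sum_{i \geq 1} \alpha_i v^{[p]^i}$, so $v^{[p]^N} = 0$ forces $v^{[p]^{N-1}} = 0$ upon applying $\pi^{N-1}$, and iterating brings this down to $v = 0$. Writing $x = x_s + x_n$ with $x_s \in V_s$ and $x_n \in V_n$ then gives the required decomposition, and $x_s x_n = 0$ is automatic because $V$ is abelian.

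For uniqueness, suppose $x = y_s + y_n$ is any other such decomposition. I will first check that $y_s$ and $y_n$ centralize all of $V$: since $[y_s, y_n] = 0$ and $(a+b)^{[p]} = a^{[p]} + b^{[p]}$ for commuting $a,b$, one has $x^{[p]^i} = y_s^{[p]^i} + y_n^{[p]^i}$, and an easy induction using $\ad(w^{[p]}) = (\ad w)^p$ gives $[y_s, x^{[p]^i}] = [y_n, x^{[p]^i}] = 0$ for every $i$. Consequently $x_s, x_n, y_s, y_n$ all lie in a common abelian $p$-subalgebra, where the identity $x_s - y_s = y_n - x_n$ displays a semisimple element on the left (Proposition \ref{ysemisimpleinFxp}(1) applied to the commuting pair $x_s, -y_s$) equal to a $p$-nilpotent element on the right (as a sum of commuting $p$-nilpotents). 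By the same triviality of simultaneously semisimple and $p$-nilpotent elements, both sides vanish.

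The hardest step is verifying that $V_s$ and $V_n$ really furnish a vector-space direct-sum decomposition. The difficulty is that $\pi$ is only $p$-semilinear, so the iterated image $\pi^N(V)$ is not obviously closed under $F$-scalar multiplication: one needs $\alpha w = \pi^N(\alpha^{1/p^N}\cdot v)$ to make sense when $w = \pi^N(v)$, which demands that $\alpha^{1/p^N}$ lie in $F$. It is exactly here that perfectness of $F$ is indispensable, and combined with Lemma \ref{kerf=0}(1) it is what allows the $p$-semilinear analogue of rank-nullity to underwrite the splitting.
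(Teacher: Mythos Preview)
Your argument is correct and uses the same toolkit (Lemmas \ref{kerf=0}, \ref{p-semilinear<f(v)>=V}, Proposition \ref{ysemisimpleinFxp}) as the paper, but the architecture is different. The paper first invokes Theorem \ref{x[p]ksemisimple} to find $k$ with $x^{[p]^k}$ semisimple, works inside the smaller algebra $(Fx^{[p]^k})_p$ on which $[p]$ is \emph{bijective} (perfectness enters via Lemma \ref{kerf=0}(2) to get $V^{[p]}=\langle V^{[p]}\rangle$), and then pulls $x^{[p]^k}$ back to a semisimple $x_s$ with $x_s^{[p]^k}=x^{[p]^k}$. You instead stay in the larger $(Fx)_p$ and run a Fitting decomposition of the $p$-semilinear map $[p]$ directly, obtaining a global splitting $V=V_s\oplus V_n$ from which the decomposition of $x$ is read off by projection; this bypasses Theorem \ref{x[p]ksemisimple} (your $V_s$ recovers its content) and yields a slightly stronger conclusion, namely that all of $(Fx)_p$ decomposes, not just $x$. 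For uniqueness the paper shows any competing $y_s,y_n$ actually lie in $(Fx)_p$, whereas you only show they \emph{centralize} $(Fx)_p$, which is enough to make $x_s-y_s=y_n-x_n$ simultaneously semisimple (Proposition \ref{ysemisimpleinFxp}(1)) and $p$-nilpotent. Both routes use perfectness in the same essential place: to force the $p$-semilinear image $\pi^N(V)$ (respectively $V^{[p]}$) to be a genuine $F$-subspace so that the rank--nullity bookkeeping goes through.
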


\begin{proof}
By Theorem \ref{x[p]ksemisimple}, there exists $k\in \mathbb{N}$ such that $x^{[p]^k}$ is semisimple. Define $V$ as $(Fx^{[p]^k})_p$. Then by Proposition \ref{ysemisimpleinFxp}(2), every $v\in V$ is semisimple. $[p]$ is semilinear on the abelian subalgebra $V$. By Lemma \ref{p-semilinear<f(v)>=V}, $V=\langle V^{[p]}\rangle$. As $F$ is perfect, by Lemma\ref{kerf=0}(2), $V^{[p]}=\langle V^{[p]}\rangle$. This shows that there exists $x_s \in V$ such that $x^{[p]^k}={x_s}^{[p]^k}$. As $xV=0$, the element $x_n$, defined as $x-x_s$, is nilpotent and $x_sx_n=0$. It remains to show that such decomposition is unique. For any decomposition $x=x_s+x_n$ where $x_n$ is $p$-nilpotent and $x_s$ is semisimple such that $x_sx_n=0$, there exists $m\in \mathbb{N}$ such that ${x_s}^{[p]^m}=x^{[p]^m}\in (Fx)_p$. Then $x_s\in (F{x_s}^{[p]^m})_p\subset (Fx)_p$. As $x\in (Fx)_p$, $x_n=x-x_s\in (Fx)_p$. Let $x=x_n+x_s={x_n}'+{x_s}'$ be two Jordan-Chevalley decompositions of $x$. Then $x_s{x_s}'=x_n{x_n}'=0$ and $x_s-{x_s}'={x_n}'-x_n$ is both $p$-nilpotent and semisimple. This shows that $x_s-{x_s}'={x_n}'-x_n=0$.
\end{proof}

\begin{Corollary}
Let $(L, [p])$ be a finite-dimensional restricted Lie algebra over an algebraically closed field $F$. Consider the root space decomposition $L=H \oplus_{\alpha\in \phi} L_{\alpha}$ with respect to a Cartan subalgebra $H$. Then

\begin{enumerate}
\item[(1)]
If $h\in H$ is semisimple, then $\ad h\vert_{L_{\alpha}}=\alpha(h)\id L_{\alpha}$ and $\alpha(h)\in GF(p)$ for all toral $h\in H$.

\item[(2)]
If $h=h_n+h_s$ where $h_n$ is $p$-nilpotent and $h_s$ is semisimple, then $\alpha(h)=\alpha(h_s)$.
\end{enumerate}

\begin{proof}
\begin{enumerate}
\item[(1)]
Since $\ad h|_{L_{\alpha}}$ is semisimple, $\ad h|_{L_{\alpha}}$ is diagonalisable. This implies $\alpha(h)$ is the only eigenvalue of $\ad h|_{L_{\alpha}}$ and thus $\ad h|_{L_{\alpha}} = \alpha(h)\id_{L_{\alpha}}$. Let $h$ be a toral element. Then $\alpha(h)\id_{L_{\alpha}}=\alpha(h^{[p]})\id_{L_{\alpha}}=\ad h^{[p]}|_{L_{\alpha}}=(\ad h)^p|_{L_{\alpha}}=\alpha(h)^p\id_{L_{\alpha}}$. Simply, $\alpha(h)=\alpha(h)^p$. This shows that $\alpha(h)\in \GF(p)$.
\item[(2)]
For all $h\in H$, by (1), $(\ad h - \alpha(h_s)\id)|_{L_{\alpha}}=(\ad h-\ad h_s)|_{L_{\alpha}}=\ad h_n|_{L_{\alpha}}$, which is nilpotent. By the definition of nilpotent, $\alpha(h)=\alpha(h_s)$.
\end{enumerate}
\end{proof}
\end{Corollary}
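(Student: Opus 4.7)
I want to prove (1) by translating the $p$-mapping notion of semisimplicity for $h \in L$ into diagonalisability of the endomorphism $\ad h$ on $L$; then (2) follows from (1) applied to the semisimple part $h_s$, together with the nilpotence of $\ad h_n$. The workhorse throughout is the axiom $\ad x^{[p]} = (\ad x)^p$ built into the $p$-mapping.

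\textbf{Part (1).} Since $(Fh^{[p]})_p = \sum_{i\geq 1} F h^{[p]^i}$ (the iterates of $h^{[p]}$ commute, and on commuting elements $[p]$ is additive), semisimplicity of $h$ means there exist scalars $\alpha_i \in F$ with
\[
h \;=\; \sum_{i\geq 1} \alpha_i\, h^{[p]^i}.
\]
Applying $\ad$ yields $\ad h = \sum_{i\geq 1} \alpha_i (\ad h)^{p^i}$, so $\ad h$ is annihilated by $q(X) = X - \sum_{i\geq 1}\alpha_i X^{p^i}$. In characteristic $p$ one has $q'(X) = 1$, so $q$ is separable, forcing the minimal polynomial of $\ad h$ to have distinct roots. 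Hence $\ad h$ is diagonalisable on $L$, and in particular on the invariant subspace $L_\alpha$; since by definition of $L_\alpha$ the only eigenvalue of $\ad h$ on $L_\alpha$ is $\alpha(h)$, I conclude $\ad h|_{L_\alpha} = \alpha(h)\id_{L_\alpha}$. If moreover $h$ is toral, then
\[
\alpha(h)\id_{L_\alpha} \;=\; \ad h|_{L_\alpha} \;=\; \ad h^{[p]}|_{L_\alpha} \;=\; (\ad h)^p|_{L_\alpha} \;=\; \alpha(h)^p\id_{L_\alpha},
\]
so $\alpha(h)^p = \alpha(h)$, placing $\alpha(h) \in \GF(p)$.

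\textbf{Part (2).} By Theorem \ref{jordandecomp} I may write $h = h_s + h_n$ with $h_s$ semisimple, $h_n$ $p$-nilpotent, and $h_s h_n = 0$. Part (1) gives $\ad h_s|_{L_\alpha} = \alpha(h_s)\id_{L_\alpha}$. Since $h_n$ is $p$-nilpotent, $(\ad h_n)^{p^k} = \ad h_n^{[p]^k} = 0$ for some $k$, so $\ad h_n$ is a nilpotent endomorphism of $L$. Therefore
\[
(\ad h - \alpha(h_s)\id)|_{L_\alpha} \;=\; \ad h_n|_{L_\alpha}
\]
is nilpotent, which identifies $\alpha(h_s)$ as the unique eigenvalue of $\ad h$ on $L_\alpha$; by the definition of root space this eigenvalue is $\alpha(h)$, so $\alpha(h) = \alpha(h_s)$.

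\textbf{Main obstacle.} The only non-mechanical step is the bridge in (1) from the abstract definition $h \in (Fh^{[p]})_p$ to the concrete statement that $\ad h$ is diagonalisable; the separability trick $q'(X) = 1$ is the key, and this is also where algebraic closure of $F$ is quietly used, since all eigenvalues must then lie in $F$. Once that bridge is built, the rest reduces to routine applications of $\ad x^{[p]} = (\ad x)^p$.
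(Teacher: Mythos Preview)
Your proof is correct and follows essentially the same route as the paper's. The only difference is that you actually justify the step ``$h$ semisimple $\Rightarrow$ $\ad h$ diagonalisable'' via the separable polynomial $q(X)=X-\sum_i\alpha_iX^{p^i}$ with $q'(X)=1$, whereas the paper simply asserts this implication; your toral computation and your argument for (2) via nilpotence of $\ad h_n$ are then identical to the paper's.
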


\chapter{Restricted Enveloping Algebras and Universal $p$-Envelopes}\chaptermark{$p$-Envelopes}

Note that the theorems and lemmas refer to  \cite[Section 5, Chapter 2]{SF}.
\section{Restricted Enveloping Algebras for Restricted Lie Algebras}\chaptermark{Restricted Enveloping Algebras}

Let $L$ be a Lie algebra over a field $F$. Suppose $i: L\rightarrow U(L)^-$ is a homomorphism of Lie algebras of $L$ into the Lie algebra associated with the associative $F$-algebra $U(L)$. The pair $(U(L), i)$ is called \textit{universal enveloping algebra of $L$} if for every associative $F$-algebra $A$ and every homomorphism $f:L\rightarrow A^-$ of Lie algebras, there exists a unique associative homomorphism $\overline{f}:U(L)\rightarrow A$ such that the diagram
\begin{align*}
\begin{matrix}
U(L)&&\\
\uparrow i&\searrow&\overline{f}\\
L&\xrightarrow[]{f}&A
\end{matrix}
\end{align*}
commutes.

\begin{Theorem}\label{universalenvelopingalgebraunique}
Let $L$ be a Lie algebra. Then if $(U(L), i)$ and $(V(L), j)$ are universal enveloping algebras of $L$, then there exists a unique isomorphism $h: U(L) \rightarrow V(L)$ such that $h\circ i=j$.
\end{Theorem}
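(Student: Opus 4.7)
The plan is to exploit the defining universal property of each of the two universal enveloping algebras in the standard abstract-nonsense style: each universal object maps uniquely to the other, and the two composites must be the identity by another appeal to uniqueness.

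First, I would apply the universal property of $(U(L),i)$ to the associative $F$-algebra $V(L)$ equipped with the Lie algebra homomorphism $j: L \to V(L)^-$. This yields a unique associative homomorphism $h: U(L) \to V(L)$ with $h \circ i = j$. Symmetrically, applying the universal property of $(V(L),j)$ to $U(L)$ together with $i: L \to U(L)^-$ produces a unique associative homomorphism $k: V(L) \to U(L)$ satisfying $k \circ j = i$. The existence and uniqueness of $h$ is exactly the statement required, so the only remaining task is to show that $h$ is bijective.

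For bijectivity, I would consider the composite $k \circ h: U(L) \to U(L)$. It is an associative $F$-algebra homomorphism, and it satisfies
\begin{align*}
(k \circ h) \circ i \;=\; k \circ (h \circ i) \;=\; k \circ j \;=\; i.
\end{align*}
On the other hand, the identity map $\mathrm{id}_{U(L)}$ is also an associative homomorphism $U(L) \to U(L)$ with $\mathrm{id}_{U(L)} \circ i = i$. Now I invoke the uniqueness clause in the universal property of $(U(L),i)$ one more time, this time applied to the associative algebra $A = U(L)$ with the Lie algebra homomorphism $i$ itself: there is at most one associative homomorphism $\overline{i}: U(L) \to U(L)$ with $\overline{i} \circ i = i$. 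Hence $k \circ h = \mathrm{id}_{U(L)}$. Swapping the roles of $U(L)$ and $V(L)$, the identical argument applied to $(V(L),j)$ gives $h \circ k = \mathrm{id}_{V(L)}$, so $h$ is an isomorphism with inverse $k$.

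The proof is essentially a formal diagram chase; there is no real obstacle, only the need to invoke the universal property in three places (once to produce $h$, once to produce $k$, and twice more for the uniqueness step that forces the composites to be identities). The one subtlety worth flagging is that the uniqueness of $h$ asserted in the statement comes \emph{for free} from the initial application of the universal property of $(U(L),i)$, so no further argument is needed for that part.
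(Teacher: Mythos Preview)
Your proof is correct and follows essentially the same approach as the paper: both produce the two comparison maps from the respective universal properties and then use the uniqueness clause (applied to the identity) to show the composites are identities. You even make explicit the uniqueness of $h$, which the paper leaves implicit.
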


\begin{proof}
The universal property of $U(L)$ and $V(L)$ implies that there are two maps $f$ and $g$ such that the following diagrams
\begin{align*}
\begin{matrix}
U(L) && \\
i \uparrow &\searrow f &\\
L & \xrightarrow[j]{} & V(L)
\end{matrix}\;\;,
\quad&\quad
\begin{matrix}
V(L) && \\
j \uparrow &\searrow g &\\
L & \xrightarrow[i]{} & U(L)
\end{matrix}
\end{align*}
commute. This implies $(g\circ f)\circ i = g\circ j =i$ and $(f\circ g)\circ j=f\circ i=j$. That is, the following diagrams
\begin{align*}
\begin{matrix}
U(L) && \\
i \uparrow &\searrow &g\circ f \\
L & \xrightarrow[i]{} & U(L)
\end{matrix}\;\;,
\quad\quad&\quad
\begin{matrix}
V(L) && \\
j \uparrow &\searrow &f\circ g \\
L & \xrightarrow[j]{} & V(L)
\end{matrix}
\end{align*}
commute. On the other hand, the following diagrams
\begin{align*}
\begin{matrix}
U(L) && \\
i \uparrow &\searrow &\id_{U(L)} \\
L & \xrightarrow[i]{} & U(L)
\end{matrix}\;\;,
\quad\quad&\quad
\begin{matrix}
V(L) && \\
j \uparrow &\searrow &\id_{V(L)} \\
L & \xrightarrow[j]{} & V(L)
\end{matrix}
\end{align*}
commute. By the uniqueness in the definition, this implies that $g\circ f=\id_{U(L)}$ and $f\circ g=\id_{V(L)}$. It follows that $f$ and $g$ are isomorphisms.
\end{proof}
For any $k\geq 0$, the subspace $U_{(k)}$ of a universal enveloping algebra $U(L)$ is defined as $U_{(0)} := F1$ and $U_{(k)}:=\langle \left\{x_1\cdots x_l \;\;|\;\; l \leq k,\; x_j \in L \right\}\rangle + F1$. By the definition, $U(L)=\cup_{k\in \mathbb{N}_0} U_{(k)}$, $U_{(k-1)} \subset U_{(k)}$, and $U_{(k)}U_{(l)} \subset U_{(k+l)}$.

\begin{Lemma}\label{basisofU(L)}
Let $(e_i)_{i\in I}$ be an ordered basis of a Lie algebra $L$. Assume that there are a function $k: I \rightarrow \mathbb{N}$ and the families $(v_i)_{i\in I}$ and $(z_i)_{i\in I}$ such that for every $i\in I$, $e_i^{k(i)}=v_i+z_i$, $v_i\in U_{(k(i)-1)}$, $z_i\in C(U(L))$. Then the set $B := \{z^re^s\;|\; r,s\in N(I), \,s(i)<k(i),\;\; \text{for all}\;\; i\in I\}$ is a basis of $U(L)$.
\end{Lemma}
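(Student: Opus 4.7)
The plan is to handle spanning and linear independence separately, using the classical PBW theorem for $U(L)$ together with the identification of the associated graded algebra $\operatorname{gr}(U(L))$ with the symmetric algebra $S(L)$, which has an $F$-basis of ordered monomials $\prod_i \bar e_i^{m(i)}$, $m\in N(I)$.

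For spanning, by PBW the ordered monomials $e^t=\prod_i e_i^{t(i)}$, $t\in N(I)$, already span $U(L)$, so it suffices to show each $e^t$ lies in $\operatorname{span}(B)$. I argue by induction on $|t|:=\sum_i t(i)$. If $t(i)<k(i)$ for every $i$, then $e^t\in B$ and we are done. Otherwise choose $i_0$ with $t(i_0)\ge k(i_0)$ and substitute $e_{i_0}^{k(i_0)}=v_{i_0}+z_{i_0}$ inside the product. Since $z_{i_0}\in C(U(L))$, the $z_{i_0}$-term can be pulled to the front, yielding $z_{i_0}\cdot e^{t'}$, where $t'$ is $t$ with $t(i_0)$ decreased by $k(i_0)$; by the inductive hypothesis $e^{t'}\in\operatorname{span}(B)$, and multiplying by the central $z_{i_0}$ keeps us inside $\operatorname{span}(B)$. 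The $v_{i_0}$-term lies in $U_{(|t|-1)}$, because $v_{i_0}\in U_{(k(i_0)-1)}$ and the remaining factors contribute filtration at most $|t|-k(i_0)$; applying PBW writes it as an $F$-combination of ordered monomials $e^u$ with $|u|\le|t|-1$, to which the induction applies.

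For linear independence, I pass to $\operatorname{gr}(U(L))\cong S(L)$ under the standard filtration. The relation $z_i=e_i^{k(i)}-v_i$ with $v_i\in U_{(k(i)-1)}$ shows that the image of $z_i$ in $\operatorname{gr}_{k(i)}(U(L))=S^{k(i)}(L)$ is exactly $\bar e_i^{k(i)}$. Consequently, setting $N(r,s):=\sum_i\bigl(r(i)k(i)+s(i)\bigr)$, the image of $z^r e^s$ in $\operatorname{gr}_{N(r,s)}(U(L))$ is the monomial $\prod_i \bar e_i^{r(i)k(i)+s(i)}$. The constraint $0\le s(i)<k(i)$ makes the map $(r,s)\mapsto\bigl(r(i)k(i)+s(i)\bigr)_i$ injective by Euclidean division, so these monomials are pairwise distinct basis vectors of $S(L)$. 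If $\sum_{(r,s)} c_{r,s}z^r e^s=0$ were a nontrivial relation, taking $N$ to be the maximum of $N(r,s)$ over indices with $c_{r,s}\ne 0$ and projecting to $\operatorname{gr}_N(U(L))$ would produce a nontrivial $F$-linear dependence among distinct basis monomials of $S(L)$, a contradiction.

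The main obstacle is the linear independence step, which hinges on two ingredients: the leading symbol of $z_i$ in $\operatorname{gr}(U(L))$ is exactly $\bar e_i^{k(i)}$ (this is where the hypothesis $v_i\in U_{(k(i)-1)}$ is essential), and the Euclidean division bijection between admissible pairs $(r,s)$ and exponent tuples $m\in N(I)$. Once $\operatorname{gr}(U(L))$ is identified with $S(L)$ via PBW, both facts are immediate and the argument reduces to a comparison of leading terms.
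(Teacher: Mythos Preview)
Your proof is correct and follows essentially the same approach as the paper: both rely on the congruence $z^r e^s \equiv \prod_i e_i^{r(i)k(i)+s(i)} \bmod U_{(N(r,s)-1)}$ together with the Euclidean-division bijection $(r,s)\mapsto (r(i)k(i)+s(i))_i$, and then compare with the PBW basis. The paper packages this as a single triangular change of basis showing $B_t$ is a basis of $U_{(t)}$ for each $t$, whereas you separate spanning (via substitution and induction on $|t|$) from linear independence (via the symbol map to $\operatorname{gr}(U(L))\cong S(L)$), but the content is the same.
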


\begin{proof}
For any $t\in \mathbb{N}$, the set $B_t:=\{z^re^s \;|\; \sum_{i\in I}r(i)k(i)+|s| \leq t, s(i)<k(i)\}$ is a basis of $U_{(t)}$ by the following. As $z_i \equiv e_i^{k(i)} \mod U_{(k(i)-1)}$,
$$z_i^{r(i)}\equiv e_i^{k(i)r(i)}\mod U_{(r(i)k(i)-1)}.$$
This implies that
\begin{align*}
z^re^s&=\Pi_{i\in I}\,z_i^{r(i)}e_i^{s(i)}\\
&\equiv \Pi_{i\in I}\,e_i^{r(i)k(i)+s(i)} \mod U_{(t-1)}.
\end{align*}
To prove the uniqueness, let $(r,s)$ and $(r',s')$ be pairs such that $r(i)k(i)+s(i)=r'(i)k(i)+s'(i)$ for all $i\in I$. By the assumption, $0 \leq s(i)$ and $s'(i)<k(i)$ imply that $s(i)=s'(i)$ and $r(i)=r'(i)$ for all $i\in I$. By   , $B_t$ is linearly independent. For any $n\in N(i)$, there is a decomposition $n(i)=r(i)k(i)+s(i)$ where $0\leq s(i)<k(i)$ for all $i\in I$. This implies $B_t$ is a generating set of $U_{(t)}$. It follows that $B$ is a basis of $U(L)=\cup_{t\geq0}U_{(t)}$.

\end{proof}

\section{Restricted Enveloping Algebras and Universal $p$-Envelopes}
For restricted Lie algebras, there exists the universal enveloping algebra with the additional structure of restrictedness. Let $(L,[p])$ be a restricted Lie algebra. Then a pair $(u(L),i)$ consisting of an associative $F$-algebra $u(L)$ with its unity and a restricted homomorphism $i: L \rightarrow u(L)^-$ is called a \textit{restricted universal enveloping algebra} if given any associative $F$-algebra $A$ with its unity and any restricted homomorphism $f:L\rightarrow A^-$, there is a unique homomorphism $\bar{f}: u(L) \rightarrow A$ of associative $F$-algebras such that $\bar{f}\circ i=f$. By the universal property in Theorem \ref{universalenvelopingalgebraunique}, any two restricted universal enveloping algebras of $L$ are isomorphic.

\begin{Theorem}
Let $(L, [p])$ be a restricted Lie algebra. Then
\begin{enumerate}
\item[(1)]
There exists the restricted universal enveloping algebra.

\item[(2)]
Let $(u(L), i)$ be a restricted universal enveloping algebra and $(e_j)_{j\in J}$ be an ordered basis of $L$ over $F$. Then the elements $i(e_{j_1})^{s_1}, \cdots, i(e_{j_n})^{s_n}$ for $j_1 < \cdots < j_n, n\geq 0, 0 \leq s_k \leq p-1, 1\leq k \leq n$ form a basis of $u(L)$ over $F$.

By (2), $i:L\rightarrow u(L)$ is injective and $\dim_F u(L)=p^n$ if $\dim_FL=n$.
\end{enumerate}

\end{Theorem}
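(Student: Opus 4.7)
The plan is to construct $u(L)$ as a quotient of the ordinary universal enveloping algebra $U(L)$. Let $I$ be the two-sided ideal of $U(L)$ generated by the elements $x^p-x^{[p]}$ for $x\in L$, and set $u(L):=U(L)/I$, with $i:L\to u(L)$ the composition of the canonical embedding $L\hookrightarrow U(L)$ with the projection. By construction $i(x)^p=i(x^{[p]})$, so $i$ is a restricted homomorphism. For part (1), given any restricted homomorphism $f:L\to A^-$, the ordinary universal property supplies a unique associative homomorphism $\tilde f:U(L)\to A$ with $\tilde f|_L=f$; because $f$ is restricted, $\tilde f(x^p-x^{[p]})=f(x)^p-f(x^{[p]})=0$ for every $x\in L$, so $\tilde f$ annihilates $I$ and descends to the required $\bar f:u(L)\to A$, with uniqueness inherited from that of $\tilde f$.

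For part (2), the key observation is that for every basis element $e_j$ the difference $\xi_j:=e_j^p-e_j^{[p]}\in U(L)$ is central. In any associative algebra of characteristic $p$, $\ad a=L_a-R_a$ is the difference of two commuting endomorphisms, so $(\ad a)^p=\ad(a^p)$; applied in $U(L)$ with $a=e_j$ and $y\in L$ this yields $[e_j^p,y]=(\ad e_j)^p(y)=[e_j^{[p]},y]$ by axiom (1) of the $p$-mapping. Hence $[\xi_j,y]=0$ for all $y\in L$, and since $L$ generates $U(L)$ as an associative algebra, $\xi_j\in C(U(L))$.

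With this centrality in hand I would invoke Lemma \ref{basisofU(L)}, taking $k(j)=p$, $v_j=e_j^{[p]}\in U_{(1)}\subset U_{(p-1)}$ and $z_j=\xi_j$; the ordered monomials $\xi^{r}e^{s}$ with $0\le s(j)<p$ then form an $F$-basis of $U(L)$. It remains to identify $I$ with the ideal $\sum_j U(L)\xi_j$ generated by the $\xi_j$. For this I check that the map $L\to U(L)$, $x\mapsto x^p-x^{[p]}$, is $p$-semilinear: the scalar case is immediate, and additivity follows because $(x+y)^p$ in $U(L)$ admits a Jacobson-type expansion with precisely the correction terms $s_i(x,y)$ that feature in axiom (3) of the $p$-mapping, so they cancel. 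Writing $x=\sum_j\alpha_je_j$ then gives $x^p-x^{[p]}=\sum_j\alpha_j^p\xi_j$, so $I=\sum_j U(L)\xi_j$. Passing to the quotient kills the $\xi$-variables, leaving $\{e^s:0\le s(j)<p\}$ as an $F$-basis of $u(L)$; in particular $i$ is injective and $\dim_F u(L)=p^n$ when $\dim_F L=n$. The main technical obstacle is verifying the Jacobson expansion of $(x+y)^p$ in $U(L)$, since this is what licenses the combinatorial identification of the ideal $I$ with the central ideal generated by a basis.
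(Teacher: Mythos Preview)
Your proposal is correct and follows essentially the same route as the paper: define $u(L)=U(L)/I$, observe that the elements $\xi_j=e_j^{\,p}-e_j^{[p]}$ are central, apply Lemma~\ref{basisofU(L)} with $k(j)=p$, $v_j=e_j^{[p]}$, $z_j=\xi_j$, and use the $p$-semilinearity of $x\mapsto x^p-x^{[p]}$ to reduce everything to basis elements. The only cosmetic difference is the order: the paper \emph{defines} $I$ as the central ideal $\sum_j z_jU(L)$ generated by the $\xi_j$ and then uses $p$-semilinearity to deduce $i(x)^p=i(x^{[p]})$ for all $x$, whereas you define $I$ as the ideal generated by all $x^p-x^{[p]}$ and then use $p$-semilinearity to collapse it back to $\sum_j U(L)\xi_j$; the two arguments are dual and contain the same content. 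Your ``main technical obstacle''---the Jacobson expansion ensuring $p$-semilinearity---is exactly what the paper dispatches by invoking Proposition~\ref{p1pmappingpsemilinear}, applied to $L$ sitting inside $U(L)^-$ with the associative $p$-th power as ambient $p$-map.
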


\begin{proof}
To apply Lemma \ref{basisofU(L)}, define $k(j)$ as $p$, $z_j$ as ${e_j}^p-{e_j}^{[p]}$ so $z_j \in C(U(L))$ and $v_i$ as ${e_i}^{[p]}$ so $v_i \in U_{(1)} \subset U_{(p-1)}$. Let $I := \sum_{j\in J}z_jU(L)$. $z_j \in C(U(L))$ means that all $z_j$ lie centrally in $U(L)$. This implies that $I$ is a two-sided ideal of $U(L)$. By Lemma \ref{basisofU(L)}, $I=\sum_{\sum r(j) \geq 1, 0\leq s(j)\leq p-1} Fz^re^s$ and $I \neq U(L)$. The elements $(e_{j_1}+I)^{s_1},\cdots, (e_{j_n}+I)^{s_n}$ for $j_1<\cdots<j_n, n\geq 0, 0 \leq s_k\leq p-1, 1\leq k\leq n$ form a basis of $U(L)/I$.

Define $u(L)$ as $U(L)/I$ and $i(x)$ as $x+I$, for all $x\in L$. To prove the theorem, it remains to show that $(u(L), i)$ is a restricted universal enveloping algebra. By \ref{p1pmappingpsemilinear}, the mapping $L\rightarrow U(L)$ defined by $x \mapsto x^p-x^{[p]}$ is $p$-semilinear. Then, for arbitrary $x=\sum \alpha_je_j\in L$,
\begin{align*}
(\sum \alpha_je_j)^p-(\sum\alpha_je_j)^{[p]}&=\sum {\alpha_j}^p({e_j}^p-{e_j}^{[p]})\\
&\equiv 0 \mod I.
\end{align*}
This implies $i(x)^p=i(x^{[p]})$ for all $x\in L$. Let $A$ be an associative algebra and $f:L\rightarrow A^-$ be a homomorphism such that $f(x)^p=f(x^{[p]})$ for all $x\in L$. Then $f$ has a unique extension $g:U(L)\rightarrow A$. $g(z_j)=g({e_j}^p-{e_j}^{[p]})=g(e_j)^p-g({e_j}^{[p]})=f(e_j)^p-f({e_j}^{[p]})=0$ for all $j$. This shows that $g(I)=0$. It follows that there exists a homomorphism $\bar{f}: U/I \rightarrow A$ such that the diagram
\begin{align*}
\begin{matrix}
L \;\;& \hookrightarrow \;\;& U(L) &\xrightarrow[]{\text{canonical}} & U(L)/I\\
  \;\;& f \searrow\;\; & \downarrow g & & \swarrow \bar{f}\\
  \;\;& \;\;& A & & 
\end{matrix}
\end{align*}
commutes. As $U(L)/I$ is generated by $i(L)$, $\bar{f}$ is uniquely determined by the equation $\bar{f}\circ i =f $.
\end{proof}

In $U(L)$, the Lie algebra $L$ is often identified with $i(L)$.

Let $L$ be any Lie algebra. A triple $(G, [p], i)$ consisting of a restricted Lie algebra $(G,[p])$ and a Lie algebra homomorphism $i: L\rightarrow G$ is called a \textit{$p$-envelope}\index{$p$-envelope} of $L$ if $i$ is injective and $(i(L))_p=G$. A $p$-envelope $(G, [p], i)$ is said to be \textit{universal}\index{universal} if it satisfies the following universal property: For every restricted Lie algebra $(H, [p]')$ and every homomorphism $f:L \rightarrow H$, there exists only one restricted homomorphism $g:(G, [p]) \rightarrow (H, [p]')$ such that $g\circ i=f$.

\begin{Theorem}[The existence of universal $p$-envelopes]
Every Lie algebra $L$ has a universal $p$-envelope $\widehat{L}$.
\end{Theorem}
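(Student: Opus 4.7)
The plan is to build $\widehat{L}$ inside the ordinary universal enveloping algebra $U(L)$, exploiting the fact that in characteristic $p$ every associative $F$-algebra $A$ gives rise to a restricted Lie algebra $A^-$ with canonical $p$-mapping $a\mapsto a^p$. Concretely, I would take the canonical Lie algebra embedding $i:L\hookrightarrow U(L)^-$, which is injective by the PBW basis exhibited in Lemma \ref{basisofU(L)}, and set
$$\widehat{L}:=(i(L))_p\subseteq U(L)^-,$$
the smallest $p$-subalgebra containing $i(L)$ under the associative $p$-th power. Then $(\widehat{L},[p],i)$ is a $p$-envelope of $L$ by construction: $i$ is an injective Lie algebra homomorphism and $(i(L))_p=\widehat{L}$.

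For the universal property, fix a restricted Lie algebra $(H,[p]')$ and a Lie algebra homomorphism $f:L\to H$. By the preceding theorem, let $(u(H),j)$ be a restricted universal enveloping algebra of $H$; recall that $j:H\to u(H)^-$ is an injective restricted homomorphism. The composite $j\circ f:L\to u(H)^-$ is then a Lie algebra homomorphism into an associative algebra, and by the universal property of $U(L)$ it extends uniquely to an associative $F$-algebra homomorphism $F:U(L)\to u(H)$ with $F\circ i=j\circ f$.

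Because $F$ preserves both commutators and $p$-th powers, it induces a restricted Lie algebra homomorphism $U(L)^-\to u(H)^-$. Since $j$ is restricted, $j(H)$ is a $p$-subalgebra of $u(H)^-$; combined with $F(i(L))=j(f(L))\subseteq j(H)$, this forces $F(\widehat{L})=F((i(L))_p)\subseteq (j(H))_p=j(H)$. Injectivity of $j$ then lets me define $g:=j^{-1}\circ F|_{\widehat{L}}:\widehat{L}\to H$, which is a restricted homomorphism satisfying $g\circ i=f$. Uniqueness is immediate: any restricted $g'$ with $g'\circ i=f$ agrees with $g$ on $i(L)$, and two restricted homomorphisms that coincide on a subset must coincide on the $p$-subalgebra it generates, namely $\widehat{L}$.

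I expect the main obstacle to be precisely the containment $F(\widehat{L})\subseteq j(H)$: one must invoke the \emph{restricted} enveloping algebra $u(H)$ rather than the ordinary $U(H)$, so that $j$ is restricted and its image is closed under the ambient $p$-th power of the associative algebra. Without this point, the $p$-powers formed in $U(L)$ and then transported by $F$ might land outside any copy of $H$, and the map $g$ could not be defined at all.
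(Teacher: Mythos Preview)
Your proposal is correct and follows essentially the same route as the paper: define $\widehat{L}$ as the $p$-subalgebra of $U(L)^-$ generated by $i(L)$, pass to the restricted enveloping algebra $u(H)$ so that the image of $H$ is closed under associative $p$-th powers, and extend $j\circ f$ to an associative map $U(L)\to u(H)$ whose restriction to $\widehat{L}$ lands in $j(H)$. The only cosmetic difference is that the paper phrases the key containment as $\widehat{L}\subseteq \bar f^{-1}(H)$ while you phrase it as $F(\widehat{L})\subseteq j(H)$, and you are more explicit about why the restricted (rather than ordinary) enveloping algebra of $H$ is essential.
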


\begin{proof}
Let $\widehat{L}$ be the $p$-subalgebra of $U(L)^-$ generated by $L$,  $(H, [p]')$ be any every restricted Lie algebra and $f:L\rightarrow H$ be a homomorphism. Consider $H$ as a subalgebra of $u(H)$ isomorphic to $H$. By the universal property of $U(L)$, there is an associative homomorphism $\bar{f}: U(L) \rightarrow u(H)$. We want to prove that $\bar{f}^{-1}(H)$ is a $p$-subalgebra containing $L$ and thus it contains $\widehat{L}$. Clearly, $L \subset \bar{f}^{-1}(H)$. If $x\in f^{-1}(H)$, then $f(x)=\bar{f}(x)\in H$ and $\bar{f}(x^p)=\bar{f}(x)^p={\bar{f}(x)}^{[p]}\in H$ in $u(H)$. This implies $x^p\in \bar{f}^{-1}(H)$. The homomorphism $\bar{f}: \widehat{L}\rightarrow H$ is an extension of $f$. As $\widehat{L}$ is generated by $L$ and $p$th powers of $L$, $\bar{f}$ is unique.
\end{proof}

\begin{Proposition}
Let $L$ be a Lie algebra. Then
\begin{enumerate}
\item[(1)]
Let $(\overline{L}, [p], i)$ be a $p$-enveloping of $L$. If $L$ is finite-dimensional, then $\overline{L}/C(\overline{L})$ is finite-dimensional.
\item[(2)]
If $\dim_FL$ is finite, then $L$ has a finite-dimensional $p$-envelope.
\end{enumerate}
\end{Proposition}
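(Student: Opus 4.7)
For (1), the plan is to embed $\overline{L}/C(\overline{L})$ into the finite-dimensional restricted Lie algebra $\gl(i(L))$ via the adjoint action (identifying $L$ with $i(L)$ throughout). This rests on two observations that both use the same ``pull the property back to a $p$-subalgebra containing $i(L)$'' trick that was already applied in the existence proof for universal $p$-envelopes.

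First I would show that $i(L)$ is an \emph{ideal}, not just a Lie subalgebra, of $\overline{L}$. Let $N=\{z\in\overline{L}\mid [z,i(L)]\subseteq i(L)\}$ be the normaliser of $i(L)$; it is a Lie subalgebra containing $i(L)$, and for any $z\in N$ we have $[z^{[p]},i(L)]=(\ad z)^p(i(L))\subseteq i(L)$ by property~(1) of the $p$-mapping, so $N$ is closed under $[p]$. Hence $N$ is a $p$-subalgebra containing $i(L)$, and minimality of $(i(L))_p$ gives $\overline{L}=(i(L))_p\subseteq N$, so $N=\overline{L}$.

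Since $i(L)$ is an ideal, the map $\ad\colon\overline{L}\to\gl(i(L))$, $z\mapsto(\ad z)\vert_{i(L)}$, is a well-defined restricted homomorphism (restrictedness is immediate from $\ad(z^{[p]})=(\ad z)^p$) with kernel $C_{\overline{L}}(i(L))$. The second step is to identify $C_{\overline{L}}(i(L))$ with $C(\overline{L})$: given $z\in C_{\overline{L}}(i(L))$, set $T=\{y\in\overline{L}\mid[z,y]=0\}$. Then $T$ is a Lie subalgebra by the Jacobi identity, and for any $y\in T$ we have
\[
[z,y^{[p]}]=-(\ad y^{[p]})(z)=-(\ad y)^p(z)=0,
\]
since $(\ad y)(z)=-[z,y]=0$. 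Thus $T$ is a $p$-subalgebra containing $i(L)$, so $T=\overline{L}$ and $z\in C(\overline{L})$. Consequently $\overline{L}/C(\overline{L})\hookrightarrow\gl(i(L))$, and (1) follows because $\dim_F\gl(i(L))=(\dim_F L)^2<\infty$.

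For (2), the quickest route is to invoke Iwasawa's theorem (the modular analogue of Ado's theorem), which supplies a faithful finite-dimensional representation $\rho\colon L\hookrightarrow\gl(V)$. Then $\gl(V)$ is a finite-dimensional restricted Lie algebra under $A\mapsto A^p$, so the $p$-subalgebra $(\rho(L))_p\subseteq\gl(V)$ is automatically finite-dimensional, contains an injective copy of $L$, and is generated as a $p$-subalgebra by $\rho(L)$; thus $((\rho(L))_p,[p],\rho)$ is a finite-dimensional $p$-envelope of $L$. The main obstacle here is the appeal to this external theorem; a self-contained alternative would start with the universal $p$-envelope $\widehat{L}$ constructed in the previous theorem, use (1) to see $\widehat{L}/C(\widehat{L})$ is finite-dimensional, and quotient $\widehat{L}$ by a finite-codimensional $p$-ideal $J\subseteq C(\widehat{L})$ with $J\cap i(L)=0$. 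Producing such a $J$ amounts to choosing a $[p]$-stable complement of the finite-dimensional subspace $i(C(L))$ inside $C(\widehat{L})$ under the $p$-semilinear operator $[p]$, and that is the delicate point of the internal argument.
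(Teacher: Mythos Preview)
Your argument for (1) is essentially the paper's: the paper also maps $\overline{L}$ into the endomorphisms of $i(L)$ by the restricted adjoint action (phrased there as $\varphi:\overline{L}\to\Der_F(L)$, $x\mapsto(\ad x)|_L$) and identifies $\ker\varphi$ with $C(\overline{L})$ via the same ``$p$-subalgebra containing $i(L)$'' trick. The only cosmetic difference is that the paper obtains the ideal property of $i(L)$ in one line from $\overline{L}^{(1)}\subset i(L)$ rather than through the normaliser.

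For (2) the paper takes precisely your internal alternative, and the step you flag as delicate turns out not to be. One does \emph{not} need a $[p]$-stable complement: the paper simply picks an arbitrary vector-space complement $V$ of $C(\widehat{L})\cap i(L)$ inside $C(\widehat{L})$. Since $V$ lies in the centre it is automatically a Lie ideal, so $\widehat{L}/V$ is a Lie algebra; and because restrictability passes along surjective Lie homomorphisms, $\widehat{L}/V$ is restrictable and therefore carries \emph{some} $p$-mapping, with no requirement that it descend from the one on $\widehat{L}$. The $p$-subalgebra generated by the image of $L$ in $\widehat{L}/V$, taken with respect to this new $p$-map, is the desired finite-dimensional $p$-envelope (finite-dimensionality comes from (1) together with $\dim(C(\widehat{L})\cap L)\le\dim L$). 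Your Iwasawa route is correct but imports a substantially heavier external theorem; the paper's device of trading the specific $[p]$ for mere restrictability of the quotient is the lighter, self-contained way through and dissolves exactly the obstacle you were worried about.
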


\begin{proof}
\begin{enumerate}
\item[(1)]
As $\overline{L}=L_p$, $\overline{L}^{(1)} \subset L$. $L$ is an ideal of $\overline{L}$. Consider the homomorphism $\varphi: \overline{L} \rightarrow \Der_F(L)$ defined by $\varphi(x)=(\ad x)|_L$. If $x \in \ker(\varphi)$, then $(\ad x)(L)=0$. $\ker (\ad x)$ is a $p$-subalgebra of $\overline{L}$. Then $x\in C(\overline{L})$ for all $x\in \ker(\varphi)$, so $\ker (\varphi)=C(\overline{L})$. By the first isomorphism theorem, $\dim_F \overline{L}/C(\overline{L})=\dim_F \im(\varphi)\leq \dim_F\Der_F(L)$, so $\dim_F \overline{L}/C(\overline{L})$ is finite.
\item[(2)]
Let $\widehat{L}$ be a universal $p$-envelope of $L$. Choose a subspace $V \subset C(\widehat{L})$ such that $C(\widehat{L})=V \oplus (C(\widehat{L})\cap L)$. By Proposition \ref{L1restL2rest}, $\widehat{L}/V$ is restrictable. $\widehat{L}/V$ contains $L$ isomorphically. By (1), $\dim_F \widehat{L}/V = \dim_F \widehat{L}/C(\widehat{L})+\dim_FC(\widehat{L})\cap L\leq \dim_F \widehat{L}/C(\widehat{L})+\dim_F L$ is finite. Then the $p$-subalgebra generated by $L$ in $\widehat{L}/V$ is a finite-dimensional $p$-envelope of $L$.
\end{enumerate}
\end{proof}

Let $L$ be a finite-dimensional. A $p$-envelope of a finite-dimensional Lie algebra is said to be \textit{minimal}\index{minimal} if its dimension is minimal among the dimensions of all $p$-envelopes of $L$.

\begin{Lemma}\label{vectorspaceconiLhasidealofG}
Let $(G, [p], i)$ be a $p$-envelope of $L$. Then any vector space $V$ containing $i(L)$ is an ideal of $G$.
\end{Lemma}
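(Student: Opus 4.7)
The plan is to reduce the lemma to a single structural fact about $G$, namely $[G,G]\subseteq i(L)$. Once this is in hand, for any subspace $V$ with $i(L)\subseteq V\subseteq G$ we immediately obtain $[G,V]\subseteq [G,G]\subseteq i(L)\subseteq V$, which is exactly what it means for $V$ to be an ideal of $G$.

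To establish $[G,G]\subseteq i(L)$, I would first give an explicit description of $G=(i(L))_p$. The claim is that
\[
G \;=\; i(L) \;+\; F\text{-span}\bigl\{\, i(x)^{[p]^{k}} : x \in L,\ k \geq 1 \,\bigr\}.
\]
Call the right-hand side $W$. Clearly $W\subseteq G$, so it suffices to check that $W$ is a $p$-subalgebra containing $i(L)$; minimality of $(i(L))_p$ then forces $W=G$. The main computation is closure of $W$ under the Lie bracket, and it rests entirely on the defining identity $\ad(a^{[p]})=(\ad a)^{p}$. For $x, y \in L$ one has
\[
[\,i(x)^{[p]^{k}},\, i(y)\,] \;=\; (\ad i(x))^{p^{k}}(i(y)) \;\in\; i(L),
\]
because $i(L)$ is a Lie subalgebra and hence $\ad i(x)$-invariant. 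The same idea handles $[i(x)^{[p]^{j}}, i(y)^{[p]^{k}}]$: a single application of $\ad i(x)$ to $i(y)^{[p]^{k}}$ already lands in $i(L)$ by the previous calculation, and iterating keeps us there. These computations give $[W,W]\subseteq i(L)\subseteq W$, which serves simultaneously as the bracket-closure needed for $W$ to be a subalgebra and, once $W=G$ is known, as the structural fact $[G,G]\subseteq i(L)$ we set out to prove.

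Closure of $W$ under $[p]$ follows from the $p$-mapping axioms combined with the bracket-closure above: applying axiom (2) ($p$-semilinearity) together with $(i(x)^{[p]^{k}})^{[p]} = i(x)^{[p]^{k+1}}$ handles the linear parts, while the correction terms $s_i(a,b)$ appearing in axiom (3) are Lie polynomials in the summands and so lie in $W$ by what we just proved. This verification of $[p]$-closure, with its interplay between the linear and correction parts of axiom (3), is the main technical hurdle; all other steps are direct consequences of $\ad(a^{[p]})=(\ad a)^{p}$ and the Lie-subalgebra property of $i(L)$.
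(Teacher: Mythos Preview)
Your reduction to $[G,G]\subseteq i(L)$ is exactly what the paper does: its entire proof is the single line $VG\subset G^{(1)}\subset i(L)\subset V$. The paper treats the key inclusion $G^{(1)}\subset i(L)$ as already known (it asserts the same fact, without proof, a few lines earlier in the proposition on finite-dimensional $p$-envelopes), whereas you supply a full argument for it via the explicit description $G=i(L)+\operatorname{span}\{i(x)^{[p]^k}\}$ and the identity $\ad(a^{[p]})=(\ad a)^p$. So the route is identical; your version simply fills in the justification the paper leaves implicit.
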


\begin{proof}
$VG \subset G^{(1)} \subset i(L)\subset V$.
\end{proof}

\begin{Proposition}\label{twopenvelopeshomexist}
Let $(G, [p], i)$ and $(G', [p]', i')$ be two $p$-envelopes of $L$. Then there exists a (not necessarily restricted) homomorphism $f:G \rightarrow G'$ such that $f\circ i = i'$.
\end{Proposition}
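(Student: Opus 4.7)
The plan is to route both $p$-envelopes through the universal $p$-envelope $\widehat{L}$ of $L$ and then modify the resulting map into $G'$ until it descends to $G$. By the universal property of $\widehat{L}$, there exist unique restricted homomorphisms $\pi: \widehat{L} \to G$ and $\pi': \widehat{L} \to G'$ with $\pi \circ \iota = i$ and $\pi' \circ \iota = i'$, where $\iota: L \hookrightarrow \widehat{L}$ denotes the canonical inclusion. Both are surjective as Lie algebra homomorphisms: for instance, since $\pi$ is restricted, $\pi(\widehat{L}) = \pi(\iota(L)_p) = (i(L))_p = G$.

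The crucial observation is that $\ker\pi$ lies in the centre $C(\widehat{L})$. By Lemma \ref{vectorspaceconiLhasidealofG} applied with $\widehat{L}$ in place of $G$, one has $\widehat{L}^{(1)} \subseteq \iota(L)$, so $[\ker\pi, \widehat{L}] \subseteq \ker\pi \cap \iota(L)$; the right-hand side is trivial because $\pi|_{\iota(L)}$ is identified with the injective map $i$. Consequently, since $\pi'$ is a surjective Lie algebra homomorphism, $\pi'(\ker\pi) \subseteq \pi'(C(\widehat{L})) \subseteq C(G')$.

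With this in hand I would perturb $\pi'$ by a linear correction valued in $C(G')$. Choose a vector space decomposition $\widehat{L} = \iota(L) \oplus \ker\pi \oplus U$, and define a linear map $\delta: \widehat{L} \to C(G')$ by $\delta|_{\iota(L)} = 0$, $\delta|_{\ker\pi} = -\pi'|_{\ker\pi}$, and $\delta|_U = 0$. Set $\tilde\pi' := \pi' + \delta$. For $x,y \in \widehat{L}$ the bracket $[x,y]$ lies in $\widehat{L}^{(1)} \subseteq \iota(L)$, where $\delta$ vanishes, and the image of $\delta$ being central in $G'$ forces every cross-bracket to die; hence $\tilde\pi'([x,y]) = \pi'([x,y]) = [\pi'(x), \pi'(y)] = [\tilde\pi'(x), \tilde\pi'(y)]$, so $\tilde\pi'$ is a Lie algebra homomorphism. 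By construction $\tilde\pi' \circ \iota = i'$ and $\ker\pi \subseteq \ker\tilde\pi'$, so $\tilde\pi'$ factors through the surjection $\pi$ as a Lie algebra homomorphism $f: G \to G'$, and then $f \circ i = f \circ \pi \circ \iota = \tilde\pi' \circ \iota = i'$.

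The main obstacle is pinpointing the centrality of $\ker\pi$ in $\widehat{L}$; without this, no linear perturbation of $\pi'$ could kill $\ker\pi$ while preserving brackets. The fact that $\pi'(\ker\pi)$ is only forced to be central, rather than zero, also explains why $f$ is generally not a $p$-homomorphism: the correction $\delta$ is a purely linear gadget with no reason to be compatible with the $p$-mapping.
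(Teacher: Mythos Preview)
Your argument is correct. Both your proof and the paper's route everything through the universal $p$-envelope $\widehat{L}$ and rest on the same structural fact, namely $\widehat{L}^{(1)}\subseteq\iota(L)$ (Lemma~\ref{vectorspaceconiLhasidealofG}), which forces $\ker\pi$ to be central. The difference is in how the descent to $G$ is organised. The paper chooses a linear complement $V\supseteq\iota(L)$ of $\ker\pi$ inside $\widehat{L}$; Lemma~\ref{vectorspaceconiLhasidealofG} makes $V$ a subalgebra automatically, so $\widehat{i}\,|_V:V\to G$ is a Lie isomorphism and one simply sets $f=\widehat{i}'\circ(\widehat{i}\,|_V)^{-1}$. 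You instead keep the quotient picture and perturb $\pi'$ by a linear map into $C(G')$ so that the modified map annihilates $\ker\pi$; the centrality of the correction is exactly what guarantees the perturbed map still respects brackets. These are two packagings of the same idea: your decomposition $\widehat{L}=\iota(L)\oplus\ker\pi\oplus U$ produces the complement $\iota(L)\oplus U$ playing the role of the paper's $V$, and on that complement your $\tilde\pi'$ agrees with $\pi'$. The paper's version is slightly slicker because it avoids checking the bracket identity by hand, while yours makes the mechanism (central perturbation) more transparent and explains concretely why the resulting $f$ need not be restricted.
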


\begin{proof}
Let $\widehat{L} \supset L$ be a universal $p$-envelope of $L$. By the definition of the universal $p$-envelope, there exist $p$-homomorphisms $\widehat{i}$ extending $i$ and $\widehat{i}'$ extending $i'$. Then $\widehat{i}(\widehat{L})$ is a $p$-subalgebra of $G$ containing $i(L)$. Then $G={i(L)}_p=\widehat{i}(\widehat{L})$. This implies that $\widehat{i}:\widehat{L}\rightarrow G$ is surjective. By the definition of $\widehat{L}$, $\widehat{i} \mid_L=i$ is injective. Set $V$ as a subspace of $\widehat{L}$ containing $L$ such that $\widehat{i}\mid_V$ is an isomorphism of vector spaces. By Lemma \ref{vectorspaceconiLhasidealofG}, $V$ is a subalgebra of $\widehat{L}$. Define $x$ as $\widehat{i}\mid_V$, a restriction of a Lie algebra homomorphism. Then $x$ is an isomorphism of Lie algebras. $x^{-1}\circ i=x^{-1}\circ\widehat{i} \mid_L=\id_L$. From the commutative diagram
\begin{align*}
\begin{matrix}
 & & L & & \\
 i&\swarrow&\cap &\searrow&i'\\
G &\xrightarrow[\sim]{x^{-1}}&V&&G'\\
 \widehat{i}&\nwarrow&\cap&\nearrow&\widehat{i}'\\
 &&\widehat{L}&&
\end{matrix}
\end{align*}, $f:= \widehat{i}'\circ x^{-1}$ is a homomorphism from $G$ to $G'$ that satisfies $f\circ i=i'$.
\end{proof}

\begin{Proposition}\label{existpenvelope}
Let $L$ be a finite-dimensional and $(G, [p], i)$ and $(G', [p]', i')$ be two finite-dimensional $p$-envelopes of $L$. Suppose that $f:G \rightarrow G'$ is a homomorphism such that $f\circ i=i'$. Then
\begin{enumerate}
\item[(1)]
There exists an ideal $J\subset C(G)$ such that $G'=f(G)\oplus J$.

\item[(2)]
There exists a $p$-envelope $H\subset f(G)$ of $L$.
\end{enumerate}
\end{Proposition}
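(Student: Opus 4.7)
The plan for part (1) is to produce an ideal $J$ of $G'$ sitting inside the center $C(G')$ that complements $f(G)$ as a vector space. I will first invoke Lemma~\ref{vectorspaceconiLhasidealofG} to conclude that $f(G)$ is an ideal of $G'$, since it contains $i'(L)$, and in fact that $(G')^{(1)} \subset i'(L) \subset f(G)$, so $G'/f(G)$ is abelian. The heart of the argument will be the claim that $G' = f(G) + C(G')$; once this is in hand, any vector space complement $J$ of $f(G) \cap C(G')$ inside $C(G')$ gives $G' = f(G) \oplus J$, and $J \subset C(G')$ automatically makes $J$ an ideal of $G'$.

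To prove this equality I introduce the obstruction map $d : G \to G'$ defined by $d(g) := f(g)^{[p]'} - f(g^{[p]})$, which measures the failure of $f$ to be a restricted homomorphism. The key technical step is to show $d(g) \in C(G')$ for every $g \in G$. First, one computes $[d(g), f(h)] = 0$ for all $h \in G$: expanding $[f(g)^{[p]'}, f(h)] = (\ad f(g))^p(f(h))$ and using that $f$ is a Lie algebra homomorphism together with $(\ad g)^p = \ad g^{[p]}$ in $G$, one obtains $(\ad f(g))^p(f(h)) = f([g^{[p]}, h]) = [f(g^{[p]}), f(h)]$, so the two terms in $[d(g), f(h)]$ cancel. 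In particular $\ad d(g)$ vanishes on $i'(L)$, and the restricted-derivation identity $(\ad w)(x^{[p]'}) = (\ad x)^{p-1}((\ad w)(x))$ makes $\ad d(g)$ a restricted derivation of $G'$, so its vanishing on the $p$-generating set $i'(L)$ forces it to vanish on all of $G' = (i'(L))_p$. Once $d(g) \in C(G')$ is established, $f(G) + C(G')$ is closed under brackets (since $C(G')$ is central) and closed under $[p]'$ (using the Jacobson formula together with $f(g)^{[p]'} = f(g^{[p]}) + d(g) \in f(G) + C(G')$), so it is a $p$-subalgebra containing $i'(L)$ and must equal $G'$.

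For part (2) the approach is direct and does not need part (1). Since $G$ is restrictable and $f$ surjects onto $f(G)$, Proposition~\ref{L1restL2rest} guarantees $f(G)$ is restrictable; I fix any $p$-mapping $[p]''$ on $f(G)$ making it a restricted Lie algebra and let $H$ be the $p$-subalgebra of $(f(G), [p]'')$ generated by $i'(L)$. Then $H \subset f(G)$ is a restricted Lie algebra, $i' : L \hookrightarrow H$ is an injective Lie algebra homomorphism, and $(i'(L))_p = H$ by construction, so $H$ is a $p$-envelope of $L$ contained in $f(G)$. The main obstacle is the step $d(g) \in C(G')$ in part (1): the vanishing of $\ad d(g)$ on $i'(L)$ is a direct calculation, but propagating it to all of $G'$ via the restricted-derivation identity is the conceptual heart of the proof.
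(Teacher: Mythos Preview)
Your proof of part~(2) is essentially identical to the paper's: both invoke Proposition~\ref{L1restL2rest} to make $f(G)$ restrictable, choose a $p$-mapping on it, and take the $p$-subalgebra generated by $i'(L)$.

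Your proof of part~(1), however, is correct but follows a genuinely different route. The paper appeals to Proposition~\ref{twopenvelopeshomexist} to obtain a homomorphism $j':G'\to G$ with $j'\circ i'=i$, sets $\mu=f\circ j'$, and takes the Fitting decomposition $G'=G_0'\oplus G_1'$ relative to $\mu$; since $\mu\circ i'=i'$, the nil-component $G_0'$ meets $i'(L)$ trivially and (via Lemma~\ref{vectorspaceconiLhasidealofG}) lies in $C(G')$, while $G_1'\subset f(G)$, giving $G'=f(G)+C(G')$. You reach the same equality without invoking a reverse map or Fitting theory: the obstruction $d(g)=f(g)^{[p]'}-f(g^{[p]})$ commutes with $f(G)\supset i'(L)$ by a direct computation, and since the centralizer of any element is a $p$-subalgebra this forces $d(g)\in C(G')$; then $f(G)+C(G')$ is visibly a $p$-subalgebra containing $i'(L)$, hence all of $G'$. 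Your argument is more self-contained and avoids the auxiliary machinery, at the cost of a slightly more hands-on verification that $f(G)+C(G')$ is $[p]'$-closed; the paper's Fitting argument is more structural but leans on the earlier existence result for the reverse homomorphism.
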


\begin{proof}
\begin{enumerate}
\item[(1)]
By Proposition \ref{twopenvelopeshomexist}, there exists a homomorphism $j': G' \rightarrow G$ such that $j'\circ i'=i$. Define $\mu$ as $f\circ j'$. Decompose $G'={G_0}'\oplus {G_1}'$ into its Fitting components with respect to $\mu$. As $\mu \circ i' = f\circ j' \circ i'=f\circ i=i'$, $\mu^k\circ i'=i'$ for any $k\in \mathbb{N}$. There exists $n$ such that ${G_0}'=\ker(\mu^n)$ and $\mu^n\circ i'=i'$. ${G_0}'$ is an ideal of $G'$. ${G_0}'$ intersects $i'(L)$ trivially. By Lemma \ref{vectorspaceconiLhasidealofG}, ${G_0}' \subset C(G')$. ${G_1}'=\mu({G_1}')\subset f(G)$. This implies $G'=f(G)+C(G)$. Set a direct complement $J$ of $f(G)$ which lies in $C(G')$.
\item[(2)]
As $G$ is restrictable, $f(G)$ is restrictable. Let $[p]''$ be a $p$-mapping on $f(G)$ and $H\subset f(G)$ be the $p$-subalgera of $f(G)$ which is generated by $i'(L)$. Then $(H, [p]'', i)$ is a $p$-envelope of $L$.
\end{enumerate}
\end{proof}

\begin{Theorem}
Let $L$ be a finite-dimensional Lie algebra. Then
\begin{enumerate}
\item[(1)]
Any two minimal $p$-envelopes of $L$ are isomorphic as ordinary Lie algebras.

\item[(2)]
If $(G, [p], i)$ is a finite-dimensional $p$-envelope of $L$, then there exist a minimal $p$-envelope $H \subset G$ and an ideal $J \subset C(G)$ such that $G= H\oplus J$ and $i(L)\subset H$.

\item[(3)]
A finite-dimensional $p$-envelope $(G, [p], i)$ is minimal if and only if $C(G)\subset i(L)$.
\end{enumerate}
\end{Theorem}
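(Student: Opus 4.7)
The plan is to prove (2) first, obtain (1) as a special case of the same argument, and deduce (3) from (2) together with a quotient construction. Throughout, the main tools will be Propositions \ref{twopenvelopeshomexist} and \ref{existpenvelope}, combined with a dimension squeeze against a fixed minimal $p$-envelope. For (2), I fix an auxiliary minimal $p$-envelope $(M,[p]',j)$ of $L$, which exists since $p$-envelopes of a finite-dimensional $L$ have bounded dimension. Proposition \ref{twopenvelopeshomexist} produces a Lie algebra homomorphism $\psi\colon M\to G$ with $\psi\circ j=i$, and applying Proposition \ref{existpenvelope} to $\psi$ yields an ideal $J\subset C(G)$ with $G=\psi(M)\oplus J$ together with a $p$-envelope $H\subset\psi(M)$ of $L$. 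The chain $\dim M\le\dim H\le\dim\psi(M)\le\dim M$, where the leftmost inequality uses minimality of $M$ against the fact that $H$ is another $p$-envelope of $L$, forces equalities throughout. Hence $\psi$ is injective, $H=\psi(M)$ is itself minimal, and since $i(L)=\psi(j(L))\subset\psi(M)=H$, I obtain the required decomposition $G=H\oplus J$ with $i(L)\subset H$.

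For (1), the same argument applied to two minimal $p$-envelopes $(M_1,i_1)$ and $(M_2,i_2)$ produces $\psi\colon M_1\to M_2$ from Proposition \ref{twopenvelopeshomexist}; the analogous chain $\dim M_1\le\dim H\le\dim\psi(M_1)\le\dim M_1=\dim M_2$ forces $J=0$ and $\psi(M_1)=M_2$ with $\psi$ injective, giving a Lie algebra isomorphism (not necessarily restricted, which matches the statement of (1)).

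For (3), the reverse direction uses (2) directly: if $C(G)\subset i(L)$, then the decomposition $G=H\oplus J$ from (2) has $J\subset C(G)\subset i(L)\subset H$, so directness of the sum forces $J=0$ and $G=H$ is minimal. For the forward direction I suppose $G$ is minimal and choose a vector space complement $V$ with $C(G)=V\oplus(C(G)\cap i(L))$. Since $V\subset C(G)$, it is automatically an ideal of $G$, so Proposition \ref{L1restL2rest} makes $G/V$ restrictable; the composite $L\to G\to G/V$ is injective because any element of $i(L)\cap V$ already lies in $C(G)\cap i(L)$ and hence in $V\cap(C(G)\cap i(L))=0$. The $p$-subalgebra generated by the image of $L$ in $G/V$ is then a $p$-envelope of $L$ of dimension at most $\dim G-\dim V$, and minimality of $G$ forces $\dim V=0$, i.e.\ $C(G)\subset i(L)$.

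I expect the most delicate step to be the quotient construction in the forward direction of (3): I need $V$ to be a genuine ideal of $G$ (supplied by centrality), Proposition \ref{L1restL2rest} to promote $G/V$ to a restrictable Lie algebra so that a $p$-subalgebra can be generated inside it, and the induced map $L\to G/V$ to remain injective so that the resulting $p$-subalgebra is a bona fide $p$-envelope of $L$ against which minimality of $G$ can be tested. The rest of the argument amounts to careful bookkeeping around the two earlier propositions.
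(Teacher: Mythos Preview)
Your proposal is correct and follows essentially the same route as the paper: parts (1) and (2) are obtained by combining Propositions \ref{twopenvelopeshomexist} and \ref{existpenvelope} with a dimension squeeze against a chosen minimal $p$-envelope, and part (3) is handled by the same central-complement quotient construction (for the forward direction) and the same application of (2) (for the converse). Your write-up is in fact more explicit than the paper's on the points you flagged as delicate, namely the injectivity of $L\to G/V$ and the verification that $H=\psi(M)$ is itself minimal.
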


\begin{proof}
\begin{enumerate}
\item[(1)]
Let $(G, [p], i)$ and $(G', [p]', i')$ be minimal $p$-envelopes of $L$. By Proposition \ref{twopenvelopeshomexist} and Proposition \ref{existpenvelope}, there exist a homomorphism $f:G \rightarrow G'$ and an ideal $J\subset C(G')$ such that $f\circ i=i'$ and $G'=f(G)\oplus J$. By Proposition \ref{existpenvelope}(2), there exists a $p$-envelope $H\subset f(G)\subset G'$. As $G'$ is minimal, $H=G'$. By the inclusion relation, $H=f(G)=G'$, so $f$ is surjective. By the definition of the minimal $p$-envelope, $\dim_FG=\dim_FG'$. It follows that $f$ is bijective.

\item[(2)]
Let $(G', [p]', i')$ be a minimal $p$-envelope of $L$. By Proposition \ref{twopenvelopeshomexist}, $G=f(G')\oplus J$ where $f:G' \rightarrow G$ is a homomorphism with $f\circ i'=i$. By Proposition \ref{existpenvelope} (2), we set a $p$-envelope $H\subset f(G')$. By the minimality of $G'$, $H=f(G')$.

\item[(3)]
Let $G$ be minimal. $C(G)=C(G)\cap i(L) \oplus I$. Then $I$ is an ideal of $G$ with $I\cap i(L)=0$. By Theorem \ref{jordandecomp}, there exists a $p$-envelope $(G', [p]', i')$ with $G' \subset G/I$. By the minimality of $G$, $\dim_F G \leq \dim_F G' \leq \dim_F G/I$. This shows that $I=0$ and $C(G)\subset i(L)$.

Conversely, suppose $C(G)\subset i(L)$. By $(2)$, $G=H \oplus J$ where $H$ is minimal and $J \subset C(G)$. Note that $i(L)\subset H$. Then $J \cap i(L)=0$. As $J \subset C(G) \subset i(L)$, $J=J \cap i(L)=0$. Hence, $G=H$ and this implies that $G$ is minimal.
\end{enumerate}
\end{proof}

\chapter{Induced $L$-Modules}\chaptermark{Induced Representations of $L$}

The theorems and lemmas refer to  \cite[Section 6, Chapter 5]{SF}. A \textit{representation $\rho$ of a Lie algebra $L$} is a Lie homomorphism $\rho:L \rightarrow \gl(V)$. The vector space $V$ is called \text{the $L$-module corresponding to $\rho$}.

\begin{Theorem}\label{existextendreplsubmoduleGsubmodule}
Let $(G, [p], i)$ be a $p$-envelope of Lie algebra $L$. Suppose $\rho:L \rightarrow \gl(V)$ is a representation of $L$. Then there exists a representation $\widehat{\rho}: G \rightarrow \gl(V)$ extending $\rho$ such that every $L$-submodule of $V$ is a $G$-submodule.
\end{Theorem}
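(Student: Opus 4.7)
The plan is to factor the desired extension $\widehat{\rho}$ through the universal enveloping algebra $U(L)$. The two ingredients are an associative extension of $\rho$ to $U(L)$, and a (not necessarily restricted) Lie homomorphism from $G$ into $U(L)^-$.

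By the universal property of $U(L)$, the Lie homomorphism $\rho : L \to \gl(V) = \End(V)^-$ extends uniquely to an associative algebra homomorphism $\bar{\rho} : U(L) \to \End(V)$, which is in particular a Lie homomorphism $U(L)^- \to \gl(V)$. Next, I would realise the universal $p$-envelope $\widehat{L}$ concretely as the $p$-subalgebra of $U(L)^-$ generated by the canonical image of $L$, as in the existence proof for universal $p$-envelopes in Chapter 3; this is itself a $p$-envelope of $L$. Since $(G,[p],i)$ and $\widehat{L}$ are both $p$-envelopes of $L$, Proposition \ref{twopenvelopeshomexist} supplies a Lie homomorphism $j : G \to \widehat{L} \hookrightarrow U(L)^-$ satisfying $j \circ i = i_L$, where $i_L : L \hookrightarrow U(L)$ is the canonical embedding.

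Set $\widehat{\rho} := \bar{\rho} \circ j : G \to \gl(V)$. As a composition of Lie homomorphisms, it is itself one, and
$\widehat{\rho} \circ i = \bar{\rho} \circ (j \circ i) = \bar{\rho} \circ i_L = \rho$, so $\widehat{\rho}$ extends $\rho$. For the submodule property, any $L$-submodule $W \subset V$ is automatically a $U(L)$-submodule via $\bar{\rho}$, since $U(L)$ is generated as an associative algebra by $L$; hence $W$ is stable under every element of $\bar{\rho}(U(L))$. Because $\widehat{\rho}(G) = \bar{\rho}(j(G)) \subset \bar{\rho}(U(L))$, the subspace $W$ is also $\widehat{\rho}(G)$-invariant, i.e.\ a $G$-submodule.

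The only conceptually delicate point---and the observation that makes the argument work---is that the theorem does \emph{not} require $\widehat{\rho}$ to be a restricted representation. This is what allows us to apply Proposition \ref{twopenvelopeshomexist} to obtain $j$ in the direction $G \to \widehat{L}$; the universal property of $\widehat{L}$ itself would only produce a restricted map in the wrong direction $\widehat{L} \to G$, which cannot in general be inverted (for instance, when $G$ is a minimal $p$-envelope much smaller than $\widehat{L}$). Once one accepts non-restrictedness of $\widehat{\rho}$, the proof reduces to a straightforward diagram chase through $U(L)$.
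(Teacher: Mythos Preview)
Your proof is correct and follows essentially the same route as the paper: both factor through $U(L)$ by extending $\rho$ to $\bar\rho$ on $U(L)$, invoke Proposition~\ref{twopenvelopeshomexist} to obtain a (not necessarily restricted) Lie homomorphism $G\to\widehat{L}\subset U(L)^-$, and set $\widehat{\rho}=\bar\rho\circ j$. Your closing remark on why one needs the non-restricted direction $G\to\widehat{L}$ rather than the universal property of $\widehat{L}$ is a useful clarification that the paper leaves implicit.
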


\begin{proof}
Let $\widehat{L}$ be the universal $p$-envelope of $L$, that is, $L \hookrightarrow \widehat{L}\subset U(L)$. By Proposition \ref{twopenvelopeshomexist}, a Lie algebra homomorphism $f: G \rightarrow \widehat{L}$ such that the diagram
\begin{align*}
\begin{matrix}
L & \hookrightarrow & \widehat{L}\\
i\downarrow &\nearrow f&  \\
G
\end{matrix}
\end{align*}
commutes. The representation $\rho$ of $L$ extends uniquely to a representation $\overline{\rho}$ of $U(L)$ which respects submodules of $L$. Then $\widehat{\rho}$ defined as $\overline{\rho}\circ f$ is an extension of $\rho$ to $G$. $\widehat{\rho}:G \rightarrow \gl(V)$ extending $\rho$ is a representation such that every $L$-submodule of $V$ is a $G$-submodule.

\end{proof}

We want to study modules of a given Lie algebra $L$ via some certain modules of subalgebra $H\subset L$. Throughout this section, we assume that an $F$-algebra $R$ is associative and has a unity 1 over the field $F$. Let $R$ and $S$ be $F$-algebras. A vector space $M$ is called \textit{$(R,S)$-bimodule}\index{$(R,S)$-bimodule}
if $M$ is both a left $R$-module and a right $S$-module and $r(ms)=(rm)s$ for all $r\in R, s\in S$ and $m\in M$. Let $S$ be an $F$-algebra, $M$ a right $S$-module and $N$ a left $S$-module. For a vector space $T$ and an $F$-bilinear mapping $f: M\times N \rightarrow T$, $f$ is said to be \textit{$S$-balanced}\index{$S$-balanced} if $f(ms, n)=f(m,sn)$ for all $(m,n)\in M\times N$ and $s\in S$. The pair $(T,f)$ is called a \textit{tensor product of $M$ and $N$}\index{tensor product} if, for any $F$-vector space $P$ and any balanced mapping $g:M\times N\rightarrow P$, there exists a uniquely determined linear mapping $\psi:T\rightarrow P$ such that $\psi\circ f=g$. By the universal property in the definition of a tensor product, the pair $(T,f)$ is determined up to isomorphisms. We denote $T=M\otimes_S N$ and $f(m,n)=m\otimes n$. The vector space $T$ is generated by the elements $m\otimes n$ where $m\in M$ and $n\in N$.

\begin{Lemma}\label{rmodulestructureontensorproduct}
Let $R$ and $S$ be $F$-algebras. Suppose $M$ is an $(R,S)$-module and $N$ is a left $S$-module. Then there exists a left $R$-module struture on $M\otimes_S N$ such that $r(m\otimes n) := (rm)\otimes n$ for all $(m,n)\in M\times N$ and $r\in R$.
\end{Lemma}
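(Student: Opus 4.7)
The plan is to construct the $R$-action via the universal property of the tensor product, applied one $r \in R$ at a time, and then verify the module axioms on elementary tensors.

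First, I would fix $r \in R$ and define a map $\phi_r : M \times N \to M \otimes_S N$ by $\phi_r(m,n) = (rm) \otimes n$. Routine checks show that $\phi_r$ is $F$-bilinear, since left multiplication by $r$ is $F$-linear on $M$ and tensor product is $F$-bilinear. The crucial step is to verify that $\phi_r$ is $S$-balanced: for $m \in M$, $n \in N$, $s \in S$ I would compute
\begin{align*}
\phi_r(ms, n) = (r(ms)) \otimes n = ((rm)s) \otimes n = (rm) \otimes (sn) = \phi_r(m, sn),
\end{align*}
where the second equality uses the bimodule compatibility $r(ms) = (rm)s$ and the third uses the $S$-balancedness built into $\otimes_S$. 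The universal property of $M \otimes_S N$ then yields a unique $F$-linear map $\tilde\phi_r : M \otimes_S N \to M \otimes_S N$ satisfying $\tilde\phi_r(m \otimes n) = (rm) \otimes n$.

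Next, I would define the action by $r \cdot x := \tilde\phi_r(x)$ for $x \in M \otimes_S N$. The $R$-module axioms reduce to identities on the generating elementary tensors, where they follow from the corresponding identities in $M$. Concretely, $1_R \cdot (m \otimes n) = (1_R m) \otimes n = m \otimes n$, so $1_R$ acts as the identity; additivity $(r_1+r_2)\cdot(m \otimes n) = ((r_1+r_2)m)\otimes n = (r_1 m)\otimes n + (r_2 m)\otimes n$ is immediate; and associativity $(r_1 r_2)\cdot (m \otimes n) = ((r_1 r_2)m)\otimes n = (r_1(r_2 m))\otimes n = r_1 \cdot (r_2 \cdot (m \otimes n))$ follows from the left $R$-module structure on $M$. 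Each identity then extends to arbitrary elements of $M \otimes_S N$ by $F$-linearity of $\tilde\phi_r$.

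The only genuinely content-bearing step is the verification that $\phi_r$ is $S$-balanced, and the entire lemma hinges on precisely the bimodule axiom $r(ms) = (rm)s$; without it one cannot transfer the action of $r$ across the tensor symbol $\otimes_S$. Everything else is a bookkeeping check on generators, so I would not dwell on the routine extensions beyond noting that the uniqueness half of the universal property is what guarantees $\tilde\phi_r$ is well-defined and that the module identities, once verified on elementary tensors, hold globally.
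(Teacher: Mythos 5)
Your proposal is correct and follows essentially the same route as the paper: fix $r$, check that $(m,n)\mapsto (rm)\otimes n$ is bilinear and $S$-balanced, invoke the universal property to obtain the induced linear map, and define the action through it. You supply more detail than the paper does (the explicit balancedness computation via $r(ms)=(rm)s$ and the verification of the module axioms on elementary tensors), which the paper leaves as assertions.
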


\begin{proof}
Let $r$ be an element of $R$. The mapping $f_r: M\times N \rightarrow M \otimes_S N$ defined by $f_r(m,n)=(rm)\otimes n$ is $F$-bilinear and balanced. Then there is a linear map $\psi_r: M\otimes_S N \rightarrow M \otimes_S N$ defined by $\psi_r(m\otimes n)=(rm)\otimes n$. For $v\in M\otimes_S N$ and $r\in R$, define $r\cdot v$ as $\psi_r(v)$. By the module properties, $\psi_r$ is uniquely determined.
\end{proof}

Suppose $M$ is a right $R$-module and $N$ is an $(R,S)$-bimodule. Then $M\otimes_S N$ is considered as a right $S$-module. If $M_1$ is a right $R$-module, $M_2$ is an $(R,S)$-bimodule, and $M_3$ is a left $S$-module, then we have a unique isomorphism $(M_1 \otimes_R M_2)\otimes_S M_3 \cong M_1 \otimes_R (M_2\otimes_S M_3)$ and the corresponding isomorphism is defined by $(m_1\otimes m_2)\otimes m_3\mapsto m_1\otimes(m_2\otimes m_3)$ for all $m_1\in M_1$, $m_2\in M_2$ and $m_3\in M_3$.

Let $H$ be a $p$-subalgebra of a restricted Lie algebra $(L, [p])$. As in enveloping algebras, for $S\in L^*$, $u(H, S|_H)$ can be embedded into $u(L,S)$. $u(L,S)$ becomes a free right $u(H, S|_H)$-module. The left and right multiplication of $u(L,S)$ by elements of $u(H, S|_H)$ provides $u(L,S)$ with the structure of an $(u(H, S|_H), u(H, S|_H))$-bimodule. Let $M$ be a left $H$-module with character $S|_H$. The \textit{induced $L$-module $\Ind^L_H(M,S)$ by $H$-module $M$}\index{Induced $L$-module by $H$-module} is defined as $u(L,S)\otimes_{u(H, S|_H)} M$, a left $L$-module with character $S$. The induced module $\Ind^L_H(M,S)$ is clearly a module by Lemma \ref{rmodulestructureontensorproduct}.

\begin{Proposition}\label{dimind}
Let $(L, [p])$ be a finite-dimensional restricted Lie algebra over $F$ and $S\in L^*$ be a linear form. Suppose that $H$ is a $p$-subalgebra of $L$ and $M$ is a finite-dimensional $H$-module with character $S|_H$. Then $\dim_F\Ind^L_F(M,S)=p^{\dim_F L/H}\cdot \dim_FM$.
\end{Proposition}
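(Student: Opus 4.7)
The plan is to compute the rank of $u(L,S)$ as a free right $u(H, S|_H)$-module and then use the standard behaviour of tensor products over a free module.

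First I would choose an ordered basis that is compatible with the subalgebra inclusion: pick a basis $(h_1, \ldots, h_m)$ of $H$ and extend it to a basis $(h_1, \ldots, h_m, e_1, \ldots, e_{n-m})$ of $L$, where $n = \dim_F L$ and $m = \dim_F H$, so that $n - m = \dim_F L/H$. Applying the PBW-type basis theorem for restricted (reduced) enveloping algebras gives that the ordered monomials
\begin{equation*}
h_1^{a_1}\cdots h_m^{a_m}\, e_1^{b_1}\cdots e_{n-m}^{b_{n-m}}, \qquad 0 \le a_i, b_j \le p-1,
\end{equation*}
form an $F$-basis of $u(L,S)$, while the submonomials in the $h_i$'s alone form an $F$-basis of $u(H, S|_H)$.

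Next I would argue that the $p^{n-m}$ monomials $e_1^{b_1}\cdots e_{n-m}^{b_{n-m}}$ (with $0 \le b_j \le p-1$) constitute a right $u(H, S|_H)$-basis of $u(L,S)$. The text has already asserted that $u(L,S)$ is free as a right $u(H, S|_H)$-module, so only the rank needs to be identified. Each basis element of $u(L,S)$ is, by construction, of the form $(\text{monomial in the }h_i)\cdot(\text{monomial in the }e_j)$; using the commutation relations in $u(L,S)$ to move the $h_i$ factors to the right changes only the coefficient from $u(H, S|_H)$ (working modulo lower-order terms and iterating), so the $e$-monomials span $u(L,S)$ on the right over $u(H, S|_H)$. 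A dimension count $p^n = p^{n-m}\cdot p^m$ then forces linear independence over $u(H, S|_H)$, so $u(L,S) \cong u(H, S|_H)^{\,p^{\dim_F L/H}}$ as right $u(H, S|_H)$-modules.

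Finally, tensoring with $M$ over $u(H, S|_H)$ commutes with finite direct sums, so
\begin{equation*}
\Ind^L_H(M, S) \;=\; u(L,S) \otimes_{u(H, S|_H)} M \;\cong\; M^{\,p^{\dim_F L/H}},
\end{equation*}
which has $F$-dimension $p^{\dim_F L/H} \cdot \dim_F M$, as claimed. The main obstacle is the middle step: turning the PBW basis of $u(L,S)$ (which is an $F$-basis) into an explicit free $u(H, S|_H)$-basis. Once the paper's assertion that $u(L,S)$ is right-free over $u(H, S|_H)$ is accepted, the rank is pinned down by the $F$-dimension computation $\dim_F u(L,S) = p^n$ and $\dim_F u(H, S|_H) = p^m$, and the rest is a routine tensor-product identity.
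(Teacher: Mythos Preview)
Your approach is correct and follows the same strategy as the paper: exhibit $u(L,S)$ as a free right $u(H,S|_H)$-module of rank $p^{\dim_F L/H}$ via the PBW basis, then tensor with $M$. The one difference is the basis ordering. You put the $H$-basis first and the complement second, so a PBW monomial $h^a e^b$ naturally factors as an element of $u(H,S|_H)$ times a complement monomial --- this gives the free \emph{left} module structure directly, but forces the commutation argument you flag as ``the main obstacle'' in order to get the \emph{right} module structure needed for the tensor product. The paper instead orders the basis with the complement $e_1,\ldots,e_m$ first and the $H$-basis $e_{m+1},\ldots,e_n$ second; then every PBW monomial is already of the form $(e_1^{a_1}\cdots e_m^{a_m})\cdot(\text{element of }u(H,S|_H))$, and the decomposition $u(L,S)=\bigoplus_{a} e^a\, u(H,S|_H)$ as a right module is immediate with no commutation needed. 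Your dimension-count fallback is valid, but simply reversing the order of the two blocks in your basis removes the obstacle entirely.
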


\begin{proof}
Let $\{e_1, \cdots, e_n\}$ be a basis for $L$ over $F$ such that $\{e_{m+1}, \cdots, e_n\}$ is a basis for $H$ over $F$. Define $\tau := (p-1, \cdots, p-1)$. Then $\{e^a \mid 0\leq a \leq \tau\}$ is a basis of $u(L,S)$ and $\{e^a\mid 0 \leq a \leq \tau-\sum_{i=1}^m (p-1)\epsilon_i\}$ is a basis of $u(H, S|_H)$ over $F$. This shows that
$$u(L,S)=\bigoplus_{0\leq a\leq r-(p-1)\sum_{i=m+1}^n \epsilon_i}e^au(H, S|_H).$$
By the definition of the induced module $\Ind_H^L(M, S)$ by $H$-module $M$, $F$-vector space isomorphisms
\begin{align*}
\Ind_H^L(M, S)&\cong\bigoplus_{0\leq a_i\leq p-1, 1\leq i \leq m}e^{a_i}u(H, S|_H)\otimes_{u(H, S|_H)}M\\
 &\cong\bigoplus u(H, S|_H)\otimes_{u(H, S|_H)}M \quad\quad(p^m \text{ summands})\\
 &\cong\bigoplus M \quad\quad(p^m \text{ summands})
\end{align*}
as $u(H, S|_H)\otimes_{u(H, S|_H)}M\cong M$. This shows that
\begin{align*}
\dim_F\Ind^L_H(M,S)&=p^m\cdot \dim_FM\\
& =p^{\dim_FL/H}\cdot \dim_FM.
\end{align*}
\end{proof}

\begin{Theorem}\label{IndandVhm}
Let $(L, [p])$ be a restricted Lie algebra over $F$ and $S\in L^*$. Suppose $V$ is an $L$-module with character $S$ and $H\subset L$ is a $p$-subalgebra. Assume that $M$ is an $H$-module with character $S|_H$ and $\psi:M\rightarrow V$ is an $H$-module homomorphism. Then there exists a homomorphism $\varphi: \Ind^L_H(M,S)\rightarrow V$ of $L$-modules defined by $\varphi(x\otimes m)=x\cdot\psi(m)$ for every $x\in u(L,S)$ and $m\in M$.
\end{Theorem}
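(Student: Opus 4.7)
The plan is to construct $\varphi$ by invoking the universal property of the tensor product, and then to verify that the resulting $F$-linear map respects the left $L$-action. Concretely, I would first define an auxiliary $F$-bilinear map
\begin{equation*}
g: u(L,S)\times M \longrightarrow V, \qquad g(x,m) = x\cdot \psi(m),
\end{equation*}
where the action on the right uses the fact that $V$, being an $L$-module with character $S$, naturally becomes a $u(L,S)$-module. Bilinearity over $F$ follows immediately from the bilinearity of the scalar action on $V$ together with the $F$-linearity of $\psi$.

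The crucial point is to check that $g$ is $u(H,S|_H)$-balanced, since only then does the universal property of $\otimes_{u(H,S|_H)}$ apply. For this one needs to verify $g(xh,m) = g(x,hm)$ for all $x\in u(L,S)$, $h\in u(H,S|_H)$ and $m\in M$. Using the embedding $u(H,S|_H)\hookrightarrow u(L,S)$ and associativity of the $u(L,S)$-action on $V$,
\begin{equation*}
g(xh,m) = (xh)\cdot \psi(m) = x\cdot\bigl(h\cdot \psi(m)\bigr).
\end{equation*}
Because $\psi: M\to V$ is a homomorphism of $H$-modules both carrying character $S|_H$, it extends uniquely to a homomorphism of $u(H,S|_H)$-modules, so $h\cdot\psi(m)=\psi(h\cdot m)$, which gives $g(xh,m)=x\cdot\psi(hm)=g(x,hm)$ as required. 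The universal property of the tensor product then yields a unique $F$-linear map
\begin{equation*}
\varphi: \Ind^L_H(M,S) = u(L,S)\otimes_{u(H,S|_H)} M \longrightarrow V
\end{equation*}
with $\varphi(x\otimes m) = x\cdot\psi(m)$.

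It remains to show $\varphi$ is a morphism of $L$-modules, i.e.\ of $u(L,S)$-modules. Recall from Lemma \ref{rmodulestructureontensorproduct} that the left $u(L,S)$-structure on $\Ind^L_H(M,S)$ is given by $y(x\otimes m)=(yx)\otimes m$. Hence for $y\in u(L,S)$,
\begin{equation*}
\varphi\bigl(y\cdot(x\otimes m)\bigr) = \varphi\bigl((yx)\otimes m\bigr) = (yx)\cdot\psi(m) = y\cdot\bigl(x\cdot\psi(m)\bigr) = y\cdot\varphi(x\otimes m),
\end{equation*}
and restricting to $y\in L\subset u(L,S)$ shows $\varphi$ is an $L$-module homomorphism. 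I expect the only genuine obstacle to be the balanced condition above; once one observes that any $H$-module homomorphism between modules of the same character $S|_H$ automatically lifts to $u(H,S|_H)$, the rest is a routine application of the universal property and of Lemma \ref{rmodulestructureontensorproduct}.
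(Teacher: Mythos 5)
Your proposal is correct and follows essentially the same route as the paper: define the $F$-bilinear map $(x,m)\mapsto x\cdot\psi(m)$, check it is $u(H,S|_H)$-balanced, invoke the universal property of the tensor product, and then verify $L$-equivariance from the left $u(L,S)$-module structure of Lemma \ref{rmodulestructureontensorproduct}. The only difference is that you spell out the balancedness check (via the extension of $\psi$ to a $u(H,S|_H)$-module homomorphism), which the paper dismisses as clear.
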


\begin{proof}
It is clear that the $F$-bilinear mapping $f: u(L,S)\times M \rightarrow V$ defined by $f(x,m)=x\psi(m)$ is balanced with respect to $u(H, S|_H)$. Then there exists an $F$-linear mapping $\varphi: u(L,S)\otimes_{u(H, S|_H)}M\rightarrow V$ defined by $\varphi(x\otimes m)=x\cdot\psi(m)$. By definition of the $u(L,S)$-module structure on $\Ind_H^L(M,S)$, $\varphi$ is a homomorphism of $u(L,S)$-modules.
\end{proof}

If $V$ is irreducible and $\psi \neq 0$, then $\varphi$ is surjective by the following. $\varphi(\Ind_H^L(M,S))$ is a submodule of the irreducible module $V$, so $\varphi(\Ind_H^L(M,S))=V$ as $\psi$ is nonzero. This implies $\varphi$ is surjective.

\chapter{$S$-representations}\chaptermark{$S$-representations}
The theorems and lemmas refer to \cite[Section 3, Section 7, Chapter 5]{SF}.

\section{Character $S$}
\begin{Theorem}\label{Spxpx[p]=S}
Let $(L, [p])$ be a finite-dimensional restricted Lie algebra over an algebraically closed field $F$ and $\rho:L \rightarrow \gl(V)$ be an irreducible representation of $L$. Then there exists a linear form $S\in L^*$ such that $\rho(x)^p-\rho(x^{[p]})={S(x)}^p\id_V$ for all $x\in L$.
\end{Theorem}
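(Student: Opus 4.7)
The plan is to apply Schur's lemma to a cleverly chosen operator. First, I would introduce the map $f : L \to \gl(V)$ defined by $f(x) := \rho(x)^p - \rho(x^{[p]})$ and show that its image lies in the centraliser of $\rho(L)$ in $\gl(V)$. This rests on the associative Jacobson identity $[z^p, w] = (\ad z)^p(w)$, which holds in any characteristic-$p$ associative algebra. Applied in $\gl(V)$ and combined with the fact that $\rho$ intertwines the adjoint actions, one obtains
\begin{align*}
[\rho(x)^p, \rho(y)] \;=\; (\ad \rho(x))^p(\rho(y)) \;=\; \rho((\ad x)^p(y)) \;=\; \rho([x^{[p]}, y]) \;=\; [\rho(x^{[p]}), \rho(y)],
\end{align*}
where the third equality uses axiom (1) of the $p$-mapping. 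Hence $[f(x), \rho(y)] = 0$ for all $x, y \in L$.

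Since $V$ is irreducible (and, as is standard in this setting, finite-dimensional) over the algebraically closed field $F$, Schur's lemma produces a unique scalar $\lambda(x) \in F$ with $f(x) = \lambda(x) \id_V$, defining a map $\lambda : L \to F$. I would next verify that $\lambda$ is $p$-semilinear. The scaling axiom $\lambda(\alpha x) = \alpha^p \lambda(x)$ is immediate from $\rho(\alpha x)^p = \alpha^p \rho(x)^p$ and $(\alpha x)^{[p]} = \alpha^p x^{[p]}$. For additivity, invoke the associative Jacobson formula $(a + b)^p = a^p + b^p + \sum_{i=1}^{p-1} s_i(a, b)$, in which the $s_i$ are the same Lie polynomials that appear in axiom (3) of a $p$-mapping. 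Because $\rho$ is a Lie homomorphism, $s_i(\rho(x), \rho(y)) = \rho(s_i(x, y))$, and the $s_i$-contributions in $\rho(x+y)^p$ cancel exactly against those arising from $\rho((x+y)^{[p]})$, leaving $f(x + y) = f(x) + f(y)$.

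Finally, since $F$ is algebraically closed of characteristic $p$ and hence perfect, the Frobenius $\alpha \mapsto \alpha^p$ is a bijection on $F$; define $S(x)$ to be the unique $p$-th root of $\lambda(x)$. Additivity of $S$ follows from $(S(x) + S(y))^p = S(x)^p + S(y)^p = \lambda(x) + \lambda(y) = S(x+y)^p$ together with the injectivity of Frobenius, and $F$-linearity follows similarly from the $p$-homogeneity of $\lambda$. Thus $S \in L^*$ and $\rho(x)^p - \rho(x^{[p]}) = S(x)^p \id_V$, as required. The main technical point is the centralising step, which hinges on correctly applying the Jacobson identity in two forms (once in $\gl(V)$ and once via axiom (1) to replace $(\ad x)^p$ by $\ad x^{[p]}$); once that is in hand, the remaining verifications are essentially bookkeeping.
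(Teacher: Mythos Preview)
Your proposal is correct and follows essentially the same line as the paper: show that $\rho(x)^p-\rho(x^{[p]})$ centralises $\rho(L)$, deduce it is a scalar on the irreducible module, check $p$-semilinearity of the resulting function, and take $p$th roots. The only cosmetic difference is that the paper obtains the scalar by picking an eigenvalue and arguing that its eigenspace is a nonzero $L$-submodule (i.e.\ it inlines the usual proof of Schur's lemma over an algebraically closed field), whereas you invoke Schur's lemma directly; your treatment of the centralising step and of $p$-semilinearity is in fact more explicit than the paper's.
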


\begin{proof}
As $V$ is finite-dimensional over $F$, every endomorphism ${\rho(x)}^p-\rho(x^{[p]})$ on $V$ has an eigenvalue $\alpha(x)\in F$. This shows $\left[{\rho(x)}^p-\rho(x^{[p]})-\alpha(x)\id_V, \rho(L)\right]=0$. $\ker({\rho(x)}^p-\rho(x^{[p]})-\alpha(x)\id_V)\neq 0$ is an $L$-submodule of $V$. By the irreducibility of $V$, $\ker({\rho(x)}^p-\rho(x^{[p]})-\alpha(x)\id_V)=V$. This implies that ${\rho(x)}^p-\rho(x^{[p]})-\alpha(x)\id_V$ is a zero map, i.e., ${\rho(x)}^p-\rho(x^{[p]})=\alpha(x)\id_V$. As $x \mapsto{\rho(x)}^p-\rho(x^{[p]})$ is $p$-semilinear, $\alpha$ is $p$-semilinear. Then $S(x) := \alpha(x)^{1/p}$ is a linear form in $L^*$ such that ${\rho(x)}^p-\rho(x^{[p]})={S(x)}^p\id_V$ for all $x\in L$.
\end{proof}

\begin{Example}\label{sl2isomorphismclasses}
Consider the retricted Lie algebra $\Sl(2,F)=Fe\oplus Ff\oplus Fh$ where $ef=h$, $he=2e$, $hf=-2f$, $h^{[p]}=h$, and $e^{[p]}=f^{[p]}=0$. Let $\rho: L\rightarrow \gl(V)$ be an irreducible representation of $L$ and $S$ be a linear form in $L^*$ such that ${\rho(x)}^p-\rho(x^{[p]})={S(x)}^p\id_V$ for all $x\in L$. Then
\begin{align*}
&{\rho(e)}^p={S(e)}^p\id_V,\\
&{\rho(f)}^p={S(f)}^p\id_V,\\
&{\rho(h)}^p-\rho(h)={S(h)}^p\id_V.
\end{align*}

\begin{enumerate}
\item[(a)] Let $S(e)=0$. Then ${\rho(e)}^p-\rho(e^{[p]})={S(e)}^p\id_V$ implies ${\rho(e)}^p=0$, i.e., $\rho(e)$ is nilpotent. The vector subspace $W$ of $V$ defined as $\{v\in V \mid \rho(e)(v)=0\}$ is nonzero, i.e., $W\neq (0)$. $W$ is invariant under $\rho(h)$. There exists a nonzero element $v\in W$ such that $\rho(e)(v)=0$, $\rho(h)(v)=\alpha v$ and $\alpha^p-\alpha = {S(h)}^p$. By induction on $i$,
\begin{align}
&\rho(h){\rho(f)}^i(v)=(\alpha-2i){\rho(f)}^i(v),\\
\label{rhoerhof}&\rho(e){\rho(f)}^{i+1}(v)= (i+1)(\alpha-i){\rho(f)}^i(v),\\
&{\rho(f)}^p={S(f)}^p\id_V.
\end{align}
$\sum_{i=0}^{p-1}F{\rho(f)}^i(v)$ is a nonzero submodule of $V$ and by the irreducibility of $V$, $V=\sum_{i=0}^{p-1}F{\rho(f)}^i(v)$. By the above relations, all the $\rho(f)^i(v)$ lie in different eigenspaces of $\rho(h)$.

\item[(a.1)] Let $S(f)\neq 0$. Then $\rho(f)^i(v)\neq 0$ for $0 \leq i \leq p-1$. $V= \oplus^{p-1}_{i=0}F{\rho(f)}^i(v).$

\item[(a.2)]
Let $S(f)=0$. For $k:=\min\{i \mid {\rho(f)}^i(v)=0\}$,
$$V=\sum_{i=0}^{p-1}F{\rho(f)}^i(v)= \oplus^{k-1}_{i=0}F{\rho(f)}^i(v).$$

\item[(a.2.1)] Let $S(h)\neq 0$. As $\alpha\notin \GF(p)$, $k=p$. If $k\neq p$, then $\alpha \in \GF(p)$. Assume $\alpha \not\equiv k-1 \mod p$. By the relation \ref{rhoerhof}, $0=\rho(e)\circ {\rho(f)}^k(v)=k(\alpha-k+1){\rho(f)}^{k-1}(v)$. This shows $\sum_{i=\alpha+1}^{p-1}F{\rho(f)}^i(v)$ is a proper submodule of $V$, which leads to a contradiction.

\item[(a.3)]
Let $S=0$. $\alpha=\dim_F V-1$.

\item[(b)] Let $S(e)\neq0$. By applying an automorphism to $L$,
\begin{align*}
e':= f+\lambda h-\lambda^2e, \quad\quad\quad\quad& f':=e, & h':=2\lambda e-h.
\end{align*}
where $\lambda$ is a solution of the equation $\lambda^2S(e)-\lambda S(h)-S(f)=0$. Then $S(e')=0$. Then the case (b) is dealt in the same way as case (a).
\end{enumerate}

Determine the number of isomorphism classes in each cases.

\item[(a)] Let $S(e)=0$.

\item[(a'.1)] Let $S(h)\neq0$. Then $\ker \rho(e)=Fv$ for some $v$. The eigenvalue $\alpha$ of $\rho(h)$ on $\ker \rho(e)$ distinguishes the isomorphism classes. The equation $X^p-X-{S(h)}^p=0$ has precisely $p$ solutions. This shows that there are $p$ nonisomorphic classes and each class is completely parametrised by $(S,\alpha)$.

\item[(a'.2)] Let $S(h)=0$.

\item[(a'.2.1)] Let $S(f)\neq0$. $\ker \rho(e)=Fv \oplus F{\rho(f)}^{\alpha+1}(v)$. Recall that $S(h)=0$ implies $\alpha \in \GF(p)$. The determinant of the restriction of $\rho(h)$ to $\ker\rho(e)$ is given by $\alpha(\alpha-2(\alpha+1))=-\alpha^2-2\alpha$. This scalar determines the isomorphism class completely. If $\alpha$ is a solution to the equation $X^2+2X+Y=0$ where $Y\in \GF(p)$, then $(-\alpha-2)$ is also a solution. The mapping ${\rho_1(f)}^i(v) \mapsto {\rho_2}^{i-\alpha-1}(w)$ is an isomorphism for $V$ corresponding to $\alpha$ and $W$ corresponding to $-\alpha-2$, so $V\cong W$. If $\alpha\neq -1$, then $X^2+2X+Y=0$ has 2 solutions. This implies that there are $(p+1)/2$ isomorphism classes.

\item[(a'.2.2)] Let $S=0$. Then there are $p$ isomorphism classes, determined by $\alpha=\dim_FV-1$.

\item[(b)] Let $S(e)\neq0$. Apply to the basis $\{e', f', h'\}$ and proceed as the above.

Consequently,
\item[(1)] If $S=0$ or ${S(h)}^2+4S(e)S(f)\neq0$, then there are exactly $p$ isomorphism classes.

\item[(2)] If $S\neq0$ and ${S(h)}^2+4S(e)S(f)=0$, then there are exactly $(p+1)/2$ isomorphism classes.

By considering the case-by-case analysis based on conditions concerning $S$, one can completely determine the irreducible representations.
\end{Example}

By Theorem \ref{Spxpx[p]=S}, the linear forms $S\in L^*$ are invariants of the isomorphism classes of irreducible $L$-modules. The following definition is generalisation of the cases we dealt previously. Let $(L,[p])$ be a restricted Lie algebra over $F$ and $S\in L^*$ be a linear form. A representation $\rho:L\rightarrow \gl(V)$ is called an \textit{$S$-representation}\index{$S$-representation} if ${\rho(x)}^p-\rho(x^{[p]})={S(x)}^p \id_V$ for all $x\in L$. $S$ is called \textit{the character of the representation}\index{character of the representation} or \textit{of the corresponding module}. By using linear forms $S \in L^*$, finite-dimensional of the universal enveloping algebra work with only finite-dimensional associative algebras instead of $U(L)$. Any $p$-representation is $S$-representation. More specifically, it is $S$-representation when $S=0$.

\section{Reduced Enveloping Algebras} \chaptermark{Reduced Enveloping Algebras}

By using the concept Reduced enveloping algebras, we study a new type of representations called $S$-representations in perspective of associative theory. Let $(L, [p])$ be a restricted Lie algebra and $S\in L^*$. A pair $(u(L,S), \iota)$ consisting of an associative $F$-algebra $u(L,S)$ with unity and a homomorphism $\iota: L \rightarrow u(L,S)^-$ such that ${\iota(x)}^p-\iota(x^{[p]})=S(x)^p1$ for all $x\in L$ is called an \textit{$S$-reduced universal enveloping algebra}\index{$S$-reduced universal enveloping algebra} if, for any associative $F$-algebra $A$ with unity and any homomorphism $f:L \rightarrow \overline{A}$ such that ${f(x)}^p-f(x^{[p]})={S(x)}^p1$ for all $x\in L$, there exists a unique homomorphism $\overline{f}: u(L,S) \rightarrow A$ such that $\overline{f}\circ \iota=f$. By Theorem \ref{universalenvelopingalgebraunique}, any two $S$-reduced universal enveloping algebras are isomorphisms.

\begin{Theorem}\label{existSredunienvalganditsbasis}
Let $(L, [p])$ be a restricted Lie algebra, $S\in L^*$, and $(e_i)_{i\in I}$ be an ordered basis of $L$. Define $\mathbb{N}(I):= \{f: I\rightarrow \mathbb{N} \mid f(i)=0 \text{   for all but finitely many }i\in I\}$. Then the $S$-reduced universal enveloping algebra $(u(L,S), \iota)$ exists and $\{{\iota(e)}^n\mid n\in \mathbb{N}(I), 0 \leq n(i) \leq p \text{ for all } i\in I\}$ is a basis of $u(L,S)$.
\end{Theorem}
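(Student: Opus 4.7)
The plan is to mirror the construction used earlier for the restricted universal enveloping algebra. I will realise $u(L,S)$ as a quotient of the ordinary universal enveloping algebra $U(L)$ by a suitable two-sided ideal, then invoke Lemma \ref{basisofU(L)} to obtain the claimed basis, and finally verify the universal property.

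Concretely, for each $j\in I$ set
\[
z_j := e_j^p - e_j^{[p]} - S(e_j)^p\cdot 1 \;\in\; U(L),\qquad v_j := e_j^{[p]} + S(e_j)^p\cdot 1 \;\in\; U_{(1)}\subset U_{(p-1)},
\]
so that $e_j^p = v_j + z_j$. The first step is to show that each $z_j$ lies in $C(U(L))$: since $\ad e_j^{[p]} = (\ad e_j)^p$ on $L$, the element $e_j^p - e_j^{[p]}$ commutes with all of $L$ and hence with all of $U(L)$, while $S(e_j)^p\cdot 1$ is central trivially. Consequently the left ideal $I := \sum_{j\in I} z_j U(L)$ is two-sided. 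Applying Lemma \ref{basisofU(L)} with $k(j)=p$ and the above data, the set $\{z^r e^s \mid r,s\in \mathbb{N}(I),\; 0\le s(i)<p\}$ is a basis of $U(L)$, and those monomials with $\sum_i r(i)\ge 1$ form a basis of $I$. Therefore the images of $\{e^s \mid 0\le s(i)<p\}$ form a basis of $u(L,S) := U(L)/I$, which establishes the dimension/basis claim for the homomorphism $\iota:L\to u(L,S)^-$ defined by $\iota(x) := x+I$.

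Next I verify that $(u(L,S),\iota)$ has the required $S$-restricted structure. By construction, for each basis element $e_j$ one has $\iota(e_j)^p - \iota(e_j^{[p]}) = (e_j^p - e_j^{[p]}) + I = S(e_j)^p\cdot 1 + I$. To extend this from basis vectors to all $x\in L$, I appeal to Proposition \ref{p1pmappingpsemilinear}: the map $L\to U(L)$ sending $x\mapsto x^p - x^{[p]}$ is $p$-semilinear, and $x\mapsto S(x)^p\cdot 1$ is $p$-semilinear as well (since $S$ is linear), so their difference $x\mapsto x^p - x^{[p]} - S(x)^p\cdot 1$ is $p$-semilinear. Because this $p$-semilinear map vanishes modulo $I$ on each basis element $e_j$, it vanishes modulo $I$ on all of $L$, giving $\iota(x)^p - \iota(x^{[p]}) = S(x)^p\cdot 1$ in $u(L,S)$ for every $x\in L$.

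For the universal property, let $A$ be an associative $F$-algebra with unity and $f:L\to A^-$ a Lie algebra homomorphism with $f(x)^p - f(x^{[p]}) = S(x)^p\cdot 1$ for all $x\in L$. The universal property of $U(L)$ yields a unique associative extension $g:U(L)\to A$ of $f$; evaluating $g$ on the generators of $I$ gives
\[
g(z_j) = g(e_j)^p - g(e_j^{[p]}) - S(e_j)^p\cdot 1 = f(e_j)^p - f(e_j^{[p]}) - S(e_j)^p\cdot 1 = 0,
\]
so $g(I)=0$ and $g$ descends to a homomorphism $\overline{f}:u(L,S)\to A$ with $\overline{f}\circ\iota=f$. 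Uniqueness is immediate from the fact that $u(L,S)$ is generated as an algebra by $\iota(L)$.

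There is no substantial obstacle in this argument; the only point requiring care is the use of Proposition \ref{p1pmappingpsemilinear} to extend the $S$-relation from the basis to all of $L$, which is the reason the $p$-semilinear viewpoint developed in Chapter 2 is indispensable here. (I note in passing that the bound $0\le n(i)\le p$ in the statement is a typographical slip — the correct range, as produced by Lemma \ref{basisofU(L)} and the construction above, is $0\le n(i)<p$.)
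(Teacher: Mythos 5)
Your proposal is correct and follows essentially the same route as the paper: realise $u(L,S)$ as $U(L)$ modulo the central elements $e_j^p-e_j^{[p]}-S(e_j)^p 1$, apply Lemma \ref{basisofU(L)} with $k(j)=p$ for the basis, use the $p$-semilinearity of $x\mapsto x^p-x^{[p]}-S(x)^p1$ to pass from basis vectors to all of $L$, and deduce the universal property from that of $U(L)$. Your remark that the exponent range should read $0\le n(i)<p$ is also consistent with what the paper's own proof actually produces.
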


\begin{proof}
Let $J$ be the ideal of $U(L)$ generated by $\{x^p-x^{[p]}-{S(x)}^p1 \mid x\in L \}$. Define $u(L,S)$ as $U(L)/J$ and $\iota: L\rightarrow u(L,S)$ as the restriction of the canonical projection $U(L)\rightarrow U(L)/J$. Let $A$ be an associative algebra with unity and $f:L\rightarrow A^-$ be a homomorphism such that ${f(x)}^p-f(x^{[p]})={S(x)}^p1$ for all $x\in L$. The mapping $f$ has a unique extension $U(f):U(L)\rightarrow A$ preserving unities. Then $x^p-x^{[p]}-{S(x)}^p1\in \ker(U(f))$ for all $x\in L$. This implies that there exists a mapping $\overline{f}:U(K)/J\rightarrow A$ such that the diagram
\begin{align*}
\begin{matrix}
L& \hookrightarrow &U(L) &\rightarrow& U(L)/J=u(L,S)\\
&f \searrow&\downarrow U(F)&\swarrow \overline{f}&\\
&&A&&
\end{matrix}
\end{align*}
commutes. $U(L)/J$ is generated by $\iota(L)$. Thus, $\overline{f}$ is uniquely determined by the equation $\overline{f}\circ \iota=f$.

To apply Lemma \ref{basisofU(L)}, set $k_i=p$, $z_i:= {e_i}^p-{e_i}^{[p]}-{S(e_i)}^p1\in C(U(L))$, and $v_i := {e_i}^{[p]}+{S(e_i)}^p 1\in U_{(1)}\subset U_{(p-1)}$. The $p$-semilinearity of the mapping $x\mapsto x^p-x^{[p]}-{S(x)}^p1$ implies that $J=\sum_{0\leq s(i)<p, 1 \leq |r|}Fz^re^s$ and $\{{\iota(e)}^n \mid n\in \mathbb{N}(I), 0 \leq n(i) <p\}$ is a basis of $u(L,S)$.
\end{proof}

Here, $L$ is identified with its image $\iota(L)$ in $u(L,S)$. The enveloping algebra $u(L)$ is $u(L,0)$, specifically, $u(L)$ is an $S$-reduced universal enveloping algebra when $S=0$. Note that $(u(L, S),\iota)$ depends on the $p$-map $[p]$ as the basis elements of $u(L, S)$ defined in the proof of the above theorem depends on $[p]$.

\begin{Corollary}
For every $S\in L^*$, there exists an irreducible $S$-representation of $L$.
\end{Corollary}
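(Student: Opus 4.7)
The plan is to exploit the correspondence between $S$-representations of $L$ and modules over the $S$-reduced universal enveloping algebra $u(L,S)$ constructed in Theorem \ref{existSredunienvalganditsbasis}.

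First I would observe that $u(L,S)$ is nonzero: the basis exhibited in Theorem \ref{existSredunienvalganditsbasis} contains the element corresponding to the zero function, namely the unit $1$, so $u(L,S)$ is a nonzero associative $F$-algebra with identity. By a standard application of Zorn's lemma (the fact that $1$ avoids every proper left ideal keeps chains of proper left ideals proper), $u(L,S)$ possesses a maximal left ideal $M$, and hence the quotient $V := u(L,S)/M$ is a simple left $u(L,S)$-module.

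Next, I would transport this module structure back to $L$. Define $\rho: L \rightarrow \gl(V)$ by $\rho(x)(v) := \iota(x)\cdot v$ for $v \in V$. Since $\iota$ is a homomorphism of Lie algebras, $\rho$ is a Lie algebra representation. Moreover, because $\iota$ satisfies $\iota(x)^p - \iota(x^{[p]}) = S(x)^p\cdot 1$ in $u(L,S)$, applying both sides to any $v \in V$ yields
\begin{equation*}
\rho(x)^p(v) - \rho(x^{[p]})(v) = S(x)^p\, v,
\end{equation*}
so $\rho(x)^p - \rho(x^{[p]}) = S(x)^p \id_V$; that is, $\rho$ is an $S$-representation.

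Finally, I would check irreducibility. Any $L$-submodule $W \subset V$ is stable under $\iota(L)$, hence under the associative subalgebra of $u(L,S)$ generated by $\iota(L)$; but by the construction of $u(L,S)$ as a quotient of $U(L)$, this subalgebra is all of $u(L,S)$. Therefore every $L$-submodule of $V$ is in fact a $u(L,S)$-submodule, and simplicity of $V$ over $u(L,S)$ forces $W = 0$ or $W = V$. This completes the proof. The only step that is not entirely routine is the verification that $u(L,S)$ admits a simple module, which reduces to the nonvanishing of $u(L,S)$ — and that is handed to us directly by the explicit basis of Theorem \ref{existSredunienvalganditsbasis}. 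So there is no real obstacle; the corollary is essentially a formal consequence of the existence theorem for $u(L,S)$ combined with the standard construction of a simple quotient of an associative algebra with unity.
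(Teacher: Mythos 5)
Your proof is correct and follows essentially the same route as the paper: both take a maximal left ideal $I$ of $u(L,S)$, form the simple quotient module $u(L,S)/I$, and restrict the action along $\iota$ to obtain an irreducible $S$-representation of $L$. You simply spell out the details (nonvanishing of $u(L,S)$, Zorn's lemma, and the fact that $\iota(L)$ generates $u(L,S)$ so that $L$-submodules are $u(L,S)$-submodules) that the paper leaves implicit.
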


\begin{proof}
Let $I$ be a maximal left ideal of $u(L,S)$. For $x\in u(L,S)$, define $\rho(x)(u+I):=xu+I$ for all $u\in u(L,S)$. Then $\rho\mid_L: L \rightarrow \gl(u(L,S)/I)$ is an irreducible $S$-representation.
\end{proof}

The classical representation theory of associative algebras are used to study $S$-representations.

\begin{Example}
The irreducible representations of $\Sl(2)$. By Example \ref{sl2isomorphismclasses} (1), for the case when ${S(h)}^2+4S(e)S(f)\neq 0$, there are $p$ nonisomorphic irreducible $S$-representations and all such representations are of dimension $p$. Then by Theorem \ref{existSredunienvalganditsbasis}, $\dim_F u(\Sl(2), S)=p^3$, so $u(\Sl(2), S)$ is semisimple. It follows that every $S$-representations of $\Sl(2)$ is completely reducible.
\end{Example}

Let $A$ be an associative $F$-algebra. Suppose that $z, x_1,\cdots, x_n \in A$. For $t\in {\mathbb{N}_0}^n$, define $\{z,x;0\}:=z$ and $\{z,x;t\}:=[\cdots[z,x_1],\cdots, x_1], x_2],\cdots, x_2],\cdots x_n], x_n]$ where $[\cdots[z,x_1],\cdots, x_1]$ is multiplied $t_1$ times and $[\cdots[, \cdots, x_2],\cdots, x_2]$ is multiplied $t_2$ times and so on until $[\cdots[,\cdots x_n], x_n]$ multiplied $t_n$ times.

\begin{Lemma}\label{zxsexpension}
Let $z, x_1, \cdots, x_n$ be elements of an associative algebra $A$. Then $zx^s=\sum_{0\leq t\leq s}\binom{s}{t}x^{s-t}\{z,x;t\}$.

\end{Lemma}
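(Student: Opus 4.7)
The plan is a two-tier induction: an outer induction on $n$ (the number of $x_i$) reduces everything to the scalar case $n=1$, which is itself proved by an inner induction on the exponent $s$. Throughout I read $x^s$ and $\binom{s}{t}$ in the obvious multi-index way, so $x^s=x_1^{s_1}\cdots x_n^{s_n}$ and $\binom{s}{t}=\prod_{i=1}^{n}\binom{s_i}{t_i}$, and $s-t$ denotes componentwise subtraction.

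For the base case $n=1$, I would induct on $s\in\mathbb{N}_0$. The case $s=0$ is immediate, both sides being $z$. For the step, multiplying the inductive identity on the right by $x$ and using the elementary fact $ax=xa+[a,x]$ with $a=\{z,x;t\}$, which gives $\{z,x;t\}\,x = x\,\{z,x;t\}+\{z,x;t+1\}$, produces
$$zx^{s+1}=\sum_{t=0}^{s}\binom{s}{t}x^{s+1-t}\{z,x;t\}+\sum_{t=0}^{s}\binom{s}{t}x^{s-t}\{z,x;t+1\}.$$
Shifting the second sum by $t\mapsto t-1$ and applying Pascal's identity $\binom{s}{t}+\binom{s}{t-1}=\binom{s+1}{t}$ closes the inner induction. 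For the outer step, assume the formula for $n-1$ variables and write $s'=(s_1,\ldots,s_{n-1})$, $t'=(t_1,\ldots,t_{n-1})$, $x'=(x_1,\ldots,x_{n-1})$. Applying the inductive hypothesis to $zx_1^{s_1}\cdots x_{n-1}^{s_{n-1}}$, right-multiplying by $x_n^{s_n}$, and then applying the already-established $n=1$ case with $\{z,x';t'\}$ in place of $z$ and $x_n$ in place of $x$ expands $\{z,x';t'\}\,x_n^{s_n}$ as $\sum_{t_n=0}^{s_n}\binom{s_n}{t_n}x_n^{s_n-t_n}\{\{z,x';t'\},x_n;t_n\}$. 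The nested bracket $\{\{z,x';t'\},x_n;t_n\}$ is by definition $\{z,x;t\}$ with $t=(t_1,\ldots,t_n)$, and collecting binomial coefficients into $\binom{s}{t}$ and powers of the $x_i$ into $x^{s-t}$ recovers the desired right-hand side.

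The main obstacle is purely combinatorial bookkeeping rather than algebraic depth, since brackets with distinct $x_i$ do not commute in general. The definition of $\{z,x;t\}$ fixes the order of brackets—first $t_1$ times with $x_1$, then $t_2$ times with $x_2$, and so on up to $t_n$ times with $x_n$—so one has to verify that the nested construction produced by the outer induction (where the $n=1$ case is applied \emph{after} the $(n-1)$-variable expansion, so that the $x_n$-brackets are taken last) lines up with that prescribed ordering. This matching is precisely what the definition is set up to guarantee; once it is noted, the rest reduces to Pascal's identity and re-indexing.
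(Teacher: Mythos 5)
Your proof is correct and follows essentially the same route as the paper: an outer induction on $n$ whose step expands $\{z,x';t'\}\,x_n^{s_n}$ via the single-variable case and re-assembles the multi-index binomial coefficients. The only difference is cosmetic: for $n=1$ you give an explicit induction on $s$ using $\{z,x;t\}\,x=x\{z,x;t\}+\{z,x;t+1\}$ and Pascal's identity, whereas the paper invokes the binomial theorem for the commuting operators $L_x$ and $\ad x$ (indeed leaving the reference blank), so your version actually fills in that base case more completely.
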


\begin{proof}
We prove by induction on $n$. Let $R_y$ be right multiplication by $y$ in $A$ and $L_y$ be left multiplication by $y$ in $A$. Then $R_y=L_y-\ad y$ and $[L_y, \ad y]=0$. When $n=1$, it is true by    . For $n>1$ define $z':=z{x_1}^{s_1}\cdots {x_{n-1}}^{s_{n-1}}$ and $s':=(s_1,\cdots, s_{n-1})$. Assume $z'{x'}^{s'}=\sum_{0\leq t'\leq s'}\binom{s'}{t'}{x'}^{s'-t'}\{z, x'; t'\}$. Then 
\begin{align*}
zx^s=z'{x_n}^{s_n}&=\sum_{0\leq t'\leq s'}\binom{s'}{t'}{x'}^{s'-t'}\{z,x';t'\}{x_n}^{s_n}\\
&=\sum_{0\leq t'\leq s'}\binom{s'}{t'}{x'}^{s'-t'}\sum_{0\leq t_n\leq s_n}\binom{s_n}{t_n}{x_n}^{s_n-t_n}\{\{z,x';t'\}, x_n ; t_n\}\\
&=\sum_{0\leq t'\leq s'}\sum_{0\leq t_n\leq s_n}\binom{s'}{t'}\binom{s_n}{t_n}{x'}^{s'-t'}{x_n}^{s_n-t_n}\{z, x ; (t', t_n)\}\\
&=\sum_{0\leq t\leq s}\binom{s}{t}x^{s-t}\{z,x;t\}.
\end{align*}
\end{proof}

Note that $L^{(1)}$ is the derived subalgebra of $L$, that is, $LL$.

\begin{Lemma}\label{derivedsubalgebraandlinear}
Let $L$ be a linear Lie algebra of a finite-dimensional vector space $V$. Suppose $A\subset L$ is a Lie subalgebra.

\begin{enumerate}
\item[(1)]
If $A^{(1)}$ consists of nilpotent transformations and $F$ contains all eigenvalues for every $x\in A$, then there exists a common eigenvector $v\neq0$ such that $x(v)=\lambda(x)v$ for all $x\in A$.

\item[(2)]
Let $\lambda:A\rightarrow F$ be an eigenvalue function, i.e., $x-\lambda(x)\id_V$ is nilpotent for all $x\in A$. Assume that $\lambda(y)=0$ for all $y\in A^{(1)}$. Then $\lambda$ is linear.

\item[(3)]
If $A^{(1)}$ consists of nilpotent transformations, $F$ contains all eigenvalues for every $x\in A$, $A$ is an ideal of $L$ and $V$ is $L$-irreducible, then $A^{(1)}=0$, any $x\in A$ has a unique eigenvalue $\lambda(x)$ on $V$ and $\lambda:A \rightarrow F$ is linear.

\end{enumerate}
\end{Lemma}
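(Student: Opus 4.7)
For (1), I would apply Engel's theorem to $A^{(1)}$, which by hypothesis is a Lie subalgebra of $\gl(V)$ consisting of nilpotent operators. This gives a nonzero subspace $V_0 := \{v \in V : y(v) = 0 \text{ for all } y \in A^{(1)}\}$, and since $[A, A^{(1)}] \subset A^{(1)}$ a routine check shows $V_0$ is $A$-invariant. The action of $A$ on $V_0$ factors through the abelian quotient $A/A^{(1)}$, so I have a commuting family of endomorphisms on a finite-dimensional $F$-space with all eigenvalues in $F$; a standard simultaneous triangularization produces the desired common eigenvector.

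For (2), the vanishing $\lambda|_{A^{(1)}} = 0$ means each $y \in A^{(1)}$ satisfies $y = y - \lambda(y)\id_V$ and is therefore nilpotent, placing us in the setting of (1). Iterating (1) on successive quotients yields a complete $A$-stable flag $V = V_0 \supsetneq V_1 \supsetneq \cdots \supsetneq V_n = 0$ with one-dimensional factors. On each factor $V_i/V_{i+1}$ the algebra $A$ acts by a linear functional $\chi_i \colon A \to F$, and since $\lambda(x)$ is the unique eigenvalue of $x$ on $V$, every $\chi_i(x)$ must equal $\lambda(x)$. Hence $\lambda$ coincides with the linear functional $\chi_i$.

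For (3), I would first deduce $A^{(1)} = 0$. By Jacobi, $[\ell, [a, b]] = [[\ell, a], b] + [a, [\ell, b]] \in [A, A]$ for $\ell \in L$ and $a, b \in A$, so $A^{(1)}$ is an $L$-ideal. Consequently $V_0 := \{v : A^{(1)}(v) = 0\}$, nonzero by Engel, is $L$-invariant, and $L$-irreducibility forces $V_0 = V$, giving $A^{(1)} = 0$ inside $\gl(V)$. Now $A$ is abelian, so I decompose $V = \bigoplus_\mu V_\mu$ into its generalized $A$-weight spaces, each indexed by an automatically linear $\mu \colon A \to F$. For the $L$-invariance of each $V_\mu$, the key observation is that $[x, \ell] \in A$ and $A$ is now abelian, so $(\ad x)^2(\ell) = 0$; the commutator expansion from Lemma \ref{zxsexpension} collapses to
\[(x - \mu(x))^N \ell = \ell (x - \mu(x))^N + N\,[x, \ell]\,(x - \mu(x))^{N-1},\]
and both terms kill any $v \in V_\mu$ once $N$ exceeds the nilpotency index of $x - \mu(x)$. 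Irreducibility then forces $V = V_\lambda$ for a single $\lambda$, which is therefore the unique eigenvalue of every $x \in A$; linearity of $\lambda$ follows from part (2) applied to $V$, or directly from the action on any common eigenvector produced by (1).

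The step I anticipate being delicate is the $L$-invariance of the weight spaces in (3). The classical characteristic-zero proof via the trace on a cyclic $\ell$-subspace breaks down when $\operatorname{char} F$ divides that subspace's dimension; the plan sidesteps this by first reducing to $A^{(1)} = 0$, after which $(\ad x)^2$ annihilates $\ell$ and the problematic higher commutator terms in the binomial expansion simply vanish.
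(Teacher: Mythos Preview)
Your argument is correct throughout. Parts (1) and (2) match the paper's route in spirit: the paper also invokes Engel on $A^{(1)}$ and then reads linearity off a single common eigenvector (your flag construction in (2) works but is more than needed---one eigenvector already gives $x(u)=\lambda(x)u$, and linearity of the left side finishes it).

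Part (3) is where you and the paper genuinely diverge. After the common step $A^{(1)}=0$, the paper exploits characteristic $p$: since $(\ad x)^{2}(L)\subset A^{(1)}=0$, certainly $(\ad x)^{p}(L)=0$, so $x^{p}$ is central in $L$; then $\{v:x^{p}v=\lambda^{p}v\}$ is a nonzero $L$-submodule, hence all of $V$, and in characteristic $p$ this says $(x-\lambda\,\id_V)^{p}=0$. Your route---generalized weight decomposition plus the two-term expansion coming from $(\ad x)^{2}(\ell)=0$---avoids any appeal to the Frobenius identity and is therefore characteristic-free. What the paper's trick buys is brevity (no weight-space bookkeeping); what yours buys is generality and a transparent reason why the usual trace obstruction in positive characteristic never arises once $A$ has been made abelian.
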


\begin{proof}
\begin{enumerate}
\item[(1)]
Choose an $A$-irreducible subspace $W$ of $V$. By    , $x(v)=0$ for all $x\in A^{(1)}$ and $v\in W$. Let $y\in A$ be an arbitrary element. As $F$ contains all eigenvalues for every $x\subset A$, there is an eigenvector $w\neq 0$ in $W$ for $y$, that is, $y(w)=\lambda w$. Then $\{v\in W \mid y(v)=\lambda v\}\neq (0)$ is an $A$-submodule of $W$ as $A^{(1)}\mid_W=0$. By the $A$-irreducibility of $W$, $\{v\in W \mid y(v)=\lambda v\}=W$. This shows that every element of $W$ is an eigenvector for any $y\in A$.

\item[(2)]
As the assumptions of (1) are satisfied, let $u\neq 0$ be a common eigenvector, i.e., $x(u)=\lambda(x)u$ for all $x\in A$. As the left hand side is linear in $x$, the right hand side is also linear in $x$.

\item[(3)]
By   , $A^{(1)}=0$. Let $x\in A$ and $\lambda \in F$ be any eigenvalue of $x$. Then $[x^p, L]={(\ad x)}^p(L) \subset A^{(1)}=0$ and $\{v\in V \mid s^p(v)=\lambda^p v \}$ is a nonzero $L$-invariant subspace. By the irreducibility of $V$, $\{v\in V \mid s^p(v)=\lambda^p v \}=V$, i.e., $x-\lambda\id_V$ is nilpotent. Then $\lambda$ is the only eigenvalue of $x$ so by (2), $\lambda$ is linear.
\end{enumerate}
\end{proof}

Let $I$ be an ideal of a finite-dimensional restricted Lie algebra $L$ and $\lambda\in I^*$ such that $\lambda(I^{(1)})=0$. Define $L^{\lambda}:= \{x\in L \mid \lambda(xy)=0, \text{ for all } y\in I\}=I^{\perp}$. Then $L^{\lambda}$ is a $p$-subalgebra of $L$ by simply checking that $L^{\lambda}$ satisfies the condition of being $p$-subalgebra. Suppose $\{e_1,\cdots, e_m\}$ is a cobasis of $L^{\lambda}$. Then $L=L^{\lambda}\bigoplus \oplus_{i=1}^m Fe_i$. Given $S\in L^*$ and a finite-dimensional $L^{\lambda}$-module $M$ with the character $S\mid_{L^{\lambda}}$ such that $x\cdot m= \lambda(x)m$ for all $x\in I$ and $m\in M$, define $V:=\Ind^L_{L^{\lambda}}(M,S)$ and $V_{(j)}:=\sum_{0\leq s\leq \tau, |s|\leq j}Fe^s \otimes M$ for $\tau=(p-1, \cdots, p-1)$.

\begin{Lemma}\label{equationaboutyes}
\begin{enumerate}
\item[(1)]
There are $y_1,\cdots, y_m \in I$ such that $\lambda(y_ie_j)=\delta_{ij}$.

\item[(2)] For all $v\in M$, $(y_i-\lambda(y_i)1)\cdot e^s\otimes v \equiv s_i e^{s-\epsilon}\otimes v \mod V_{(|s|-1)}$.
\end{enumerate}
\end{Lemma}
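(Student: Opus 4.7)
For (1), my approach is a duality argument applied to the pairing $\beta : L \times I \to F$, $\beta(x,y) := \lambda(xy)$. By the very definition of $L^\lambda$, the left radical of $\beta$ is $L^\lambda$, so $\beta$ descends to a pairing $L/L^\lambda \times I \to F$ whose left radical is trivial. Since $\dim_F L/L^\lambda = m$, the induced linear map $\psi : I \to (L/L^\lambda)^*$ sending $y$ to $(\bar x \mapsto \lambda(yx))$ has image of dimension exactly $m$ and is therefore surjective. Choosing preimages $y_i \in I$ of the basis dual to $\{\bar e_1, \ldots, \bar e_m\}$ produces elements with $\lambda(y_i e_j) = \pm \delta_{ij}$, and the antisymmetry of the Lie bracket lets me absorb the sign into the $y_i$ to get precisely $\lambda(y_i e_j) = \delta_{ij}$.

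The key preliminary for (2) is that every iterated commutator $\{y_i, e; t\}$ lies in $I$, hence in $L^\lambda$. It lies in $I$ because $y_i \in I$ and $I$ is an ideal of $L$. It lies in $L^\lambda$ because for any $y \in I$ the Lie product $yz$ with $z \in I$ sits in $I^{(1)}$, on which $\lambda$ vanishes, so $I \subset L^\lambda$. Consequently the image of each $\{y_i, e; t\}$ in $u(L,S)$ lies in the subalgebra $u(L^\lambda, S|_{L^\lambda})$, so it may be transported across the tensor in $V = \Ind^L_{L^\lambda}(M, S)$; and since it lies in $I$, its action on $v \in M$ is by the scalar $\lambda(\{y_i, e; t\})$.

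With this in place, Lemma \ref{zxsexpension} applied to $y_i \cdot (e^s \otimes v) = (y_i e^s) \otimes v$ gives
$$y_i \cdot (e^s \otimes v) \;=\; \sum_{0 \le t \le s} \binom{s}{t}\,\lambda(\{y_i, e; t\})\,e^{s-t} \otimes v.$$
The $t = 0$ term is $\lambda(y_i)\,e^s \otimes v$, which is exactly cancelled by subtracting $\lambda(y_i)\,1$. For each $|t| = 1$ term with $t = \epsilon_j$, the commutator $\{y_i, e; \epsilon_j\}$ equals $y_i e_j$, and by (1) its $\lambda$-value is $\delta_{ij}$; combined with $\binom{s}{\epsilon_j} = s_j$, the total $|t|=1$ contribution is $s_i\,e^{s - \epsilon_i} \otimes v$. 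Every term with $|t| \ge 2$ lies in $V_{(|s| - |t|)} \subset V_{(|s|-2)} \subset V_{(|s|-1)}$ and so vanishes modulo $V_{(|s|-1)}$, yielding the claimed congruence.

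The real obstacle is not a computation but the bookkeeping: one must check at each step that $\{y_i, e; t\}$ sits in $L^\lambda$, acts on $M$ by the $\lambda$-scalar, and can be moved across the tensor, and that the sign conventions in (1) are tracked consistently through the antisymmetry of the bracket. Once those structural points are settled, (1) is pure linear algebra and the rest of (2) is clean filtration bookkeeping on top of Lemma \ref{zxsexpension}.
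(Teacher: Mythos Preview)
Your proof is correct and follows essentially the same approach as the paper. For (1) the paper runs the duality in the other direction---it maps the cobasis span $U=\sum Fe_i$ into $I^*$ via $x\mapsto\lambda(\,\cdot\,,x)$, shows injectivity from $L^\lambda=I^\perp$, and then picks $y_i$ dual to the images---but this is the same rank argument you give for $\psi:I\to(L/L^\lambda)^*$ viewed from the opposite side of the pairing (and in fact no sign adjustment is needed, since $\psi(y)(\bar e_j)=\lambda(ye_j)$ directly). For (2) the paper applies Lemma~\ref{zxsexpension} with $z=y_i-\lambda(y_i)1$ rather than with $z=y_i$, so the $t=0$ term vanishes on the module immediately instead of being cancelled afterwards; the identification of the $|t|=1$ and $|t|\ge 2$ contributions is then identical to yours, and both arguments actually produce the congruence modulo the sharper $V_{(|s|-2)}$.
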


\begin{proof}
\begin{enumerate}
\item[(1)] $U:=\sum_{i=1}^m Fe_i$. Define the bilinear form $B_{\lambda}(y,x):= \lambda(yx)$. Consider a linear mapping $\varphi: U \rightarrow I^*$ defined by $\varphi(x)=B_{\lambda}(\cdot, X)$. As $L^{\lambda}=I^{\perp}$, $\varphi$ is injective and thus the linear functionals $\varphi(e_j)$ are linearly independent. This implies that there are $y_1,\cdots, y_m\in I$ with $\varphi(e_j)(y_i)=\delta_{ij}$ for $1\leq i,j \leq m$.

\item[(2)]
Let $v\in M$. By Lemma \ref{zxsexpension}, $(y_i-\lambda(y_i)1)e^s=\sum_{0\leq t\leq s}\binom{s}{t}e^{s-t}\{y_i-\lambda(y_i)1, e; t\}$. For $t\neq 0$, $\{y_i-\lambda(y_i)1, e;t\}=\{y_i, e;t\}$ and $\{y_i, e;t\}\in I$. This implies that $\{y_i, e ; t\}\otimes v \in 1\otimes M=V_{(0)}$. 
\begin{align*}
(y_i-\lambda(y_i)1)e^s\otimes v&=\sum_{|t|\leq 1}\binom{s}{t}e^{s-t}\{y_i-\lambda(y_i)1, e; t\}\otimes v&\\
&\equiv e^s(y_i-\lambda(y_i)1)\otimes v+\sum_{j=1}^m s_je^{s-\epsilon_j}[y_i, e_j]\otimes v& \mod V_{(|s|-2)}\\
&\equiv s_ie^{s-\epsilon_i}\otimes v& \mod V_{(|s|-2)}
\end{align*}
\end{enumerate}
\end{proof}

Recall that $u(L,S)$ is a free right $u(L^{\lambda}, S\mid_{L^{\lambda}})$-module. For any $L^{\lambda}$-submodule $N\subset M$, there is a canonical embedding $\Ind^L_{L^{\lambda}}(N, S) \hookrightarrow \Ind^L_{L^{\lambda}}(M, S)$. 

\begin{Theorem}\label{submoduleisomorphictoinducedodule}
Let $W$ be an $L$-submodule of $\Ind^L_{L^{\lambda}}(M, S)$. Then there exists an $L^{\lambda}$-submodule $N$ of $M$ such that $W\cap(1\otimes M)=1\otimes N$ and $W\cong \Ind^L_{L^{\lambda}}(N, S)$. 

\end{Theorem}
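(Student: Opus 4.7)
The plan is to define $N := \{m \in M : 1 \otimes m \in W\}$ and show this subspace does all the work. Three things fall out cheaply: $N$ is an $L^\lambda$-submodule of $M$, because for $x \in L^\lambda$ we have $x \cdot (1 \otimes m) = 1 \otimes (xm)$ by the balanced relation defining the tensor product; the identity $W \cap (1 \otimes M) = 1 \otimes N$ is immediate from the definition; and the canonical embedding $\Ind^L_{L^\lambda}(N,S) \hookrightarrow V$ has image $\bigoplus_{0 \leq s \leq \tau} e^s \otimes N$ sitting inside $W$, since $1 \otimes n \in W$ for $n \in N$ and $W$ is stable under the left action of each $e_i \in L$. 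The real content of the theorem is the reverse inclusion $W \subseteq \bigoplus_{s} e^s \otimes N$, which then gives the claimed isomorphism.

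I would establish this reverse inclusion by induction on the filtration degree $j$, showing $W \cap V_{(j)} \subseteq \bigoplus_s e^s \otimes N$. The base $j=0$ is exactly the definition of $N$. For the inductive step, expand $w \in W \cap V_{(j)}$ as $w = \sum_{|s| \leq j} e^s \otimes m_s$; once I know $m_s \in N$ for every top-degree index $s$ with $|s|=j$, I may subtract $\sum_{|s|=j} e^s \otimes m_s \in \bigoplus_s e^s \otimes N \subseteq W$ to land in $W \cap V_{(j-1)}$ and invoke the inductive hypothesis.

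To extract each top-degree component $m_{s^*}$ I would use the elements $y_1, \ldots, y_m \in I$ supplied by Lemma \ref{equationaboutyes}(1). The crucial point is that $I \subseteq L^\lambda$ (since $\lambda$ vanishes on $I^{(1)}$), so each $y_i$ acts on $M$ as the scalar $\lambda(y_i)$, whence the operator $T_i := y_i - \lambda(y_i) \cdot 1$ annihilates $V_{(0)}$ and, by Lemma \ref{equationaboutyes}(2), satisfies $T_i(e^s \otimes v) \equiv s_i \, e^{s - \epsilon_i} \otimes v \pmod{V_{(|s|-2)}}$. Thus each $T_i$ lowers the filtration degree by one and behaves on the associated graded like the partial derivative $\partial/\partial e_i$ in commuting variables. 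Since $y_i \in L$, the composite $T := \prod_i T_i^{s^*_i}$ preserves $W$; applying it to $w$ drops the filtration by $|s^*| = j$ and so lands in $V_{(0)} = 1 \otimes M$. Computing on the top-degree layer, $T$ acts as $\prod_i (\partial/\partial e_i)^{s^*_i}$, which annihilates every monomial $e^s$ with $|s| = j$, $s \neq s^*$, and sends $e^{s^*}$ to $\prod_i s^*_i!$; so the image equals $\prod_i s^*_i! \cdot (1 \otimes m_{s^*})$. Since $0 \leq s^*_i \leq p-1$ makes $s^*_i!$ a unit modulo $p$, I conclude $1 \otimes m_{s^*} \in W$, i.e.\ $m_{s^*} \in N$.

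The main obstacle is the graded computation: one must carefully verify that the lower-order terms in $T_i(e^s \otimes v)$, which come from iterated brackets $[y_i, e_j] \in I \subseteq L^\lambda$ acting as scalars, really contribute only at strictly lower filtration degree, so that when $T$ is iterated $j$ times against an element of $V_{(j)}$ every piece originating from $V_{(j-1)}$ dies and only the claimed leading term survives in $V_{(0)}$. Once this graded picture is in place the induction closes, we obtain $W = \bigoplus_s e^s \otimes N$, and restricting the canonical isomorphism $\Ind^L_{L^\lambda}(N,S) \cong \bigoplus_s e^s \otimes N$ yields $W \cong \Ind^L_{L^\lambda}(N,S)$ as $L$-modules.
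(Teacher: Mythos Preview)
Your proposal is correct and follows the same strategy as the paper: define $N=\{m\in M:1\otimes m\in W\}$, prove $W\cap V_{(j)}\subseteq\sum_{|s|\le j}e^s\otimes N$ by induction on $j$, and use the operators $T_i=y_i-\lambda(y_i)1$ from Lemma~\ref{equationaboutyes} to strip off top-degree coefficients. The one difference is in the inductive step: instead of applying the full product $\prod_i T_i^{s^*_i}$ and tracking the graded computation, the paper fixes a complement $M=N\oplus\bigoplus_k Fm_k$, applies a \emph{single} $T_i$ to $w\in W\cap V_{(j)}$, lands in $W\cap V_{(j-1)}\subseteq W_{(j-1)}$ by the induction hypothesis, and reads off from the leading symbol that every degree-$j$ coefficient along the $m_k$ must vanish (since for $|s|=j\ge 1$ some $s_i$ is a unit mod $p$). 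This buys exactly what you flagged as the main obstacle: the iterated associated-graded calculation disappears, replaced by one application of $T_i$ plus the full strength of the induction hypothesis.
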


\begin{proof}
Define $N$ as $\{m\in M \mid 1\otimes m \in W\}$. Then $N$ is an $L^{\lambda}$-submodule of $M$. $\Ind^L_{L^{\lambda}}(M, S)=\bigoplus_{0\leq s \leq \tau}Fe^s\otimes M$. This implies that $1\otimes N=W\cap (1\otimes M)$. Define $W_{(j)}:= \sum_{0\leq s\leq \tau, |s|\leq j}Fe^s\otimes N\subset \Ind^L_{L^{\lambda}}(M, S)$. Then $W_{(0)}=W\cap V_{(0)}$. Our claim is that $W\cap V_{(j)} \subset W_{(j)}$. We prove this by induction on $j$. Let $j\geq 1$ and assume $W\cap V_{(j-1)}\subset W_{(j-1)}$. Let $v\in W\cap V_{(j)}$. If $M=N\bigoplus \oplus^t_{k=1}Fm_k$, then $v=\sum^t_{k=1}\sum_{s\leq\tau, |s|\leq j}\alpha(k,s)e^s\otimes m_k$. Multiply $y_i-\lambda(y_i)1$ to $v$. Then, by Lemma \ref{equationaboutyes}, $(y_i-\lambda(y_i)1)\cdot v \equiv \sum_k\sum_{|s|\leq j}\alpha(k,s)s_ie^{s-\epsilon_i}\otimes_{1\leq i\leq m}m_k \mod V_{(j-2)}$. This implies that $(y_i-\lambda(y_i)1)\cdot v \in W\cap V_{(j-1)}\subset W_{(j-1)}$. By the definition of $W_{(j-1)}$, $\alpha(k,s)=0$ whenever $|s|=j$ for $j\geq1$, so $v=0$. $W\cap V_{(j)}\subset W_{(j)}$ for all $j\geq 0$, so $W= V_{(j)}\subset W_{(j)}$ for all $j\geq 0$. This implies that $W$ is the image of $ \Ind^L_{L^{\lambda}}(N, S)$ in $V$.
\end{proof}

\begin{Corollary}\label{indirredifforgirred}
$ \Ind^L_{L^{\lambda}}(M, S)$ is $L$-irreducible if and only if $M$ is $L^{\lambda}$-irreducible.

\end{Corollary}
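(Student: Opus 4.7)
The plan is to deduce this directly from Theorem \ref{submoduleisomorphictoinducedodule}, which gives a bijection (up to isomorphism) between $L$-submodules of $\Ind^L_{L^{\lambda}}(M,S)$ and $L^{\lambda}$-submodules of $M$. I would split the proof into the two implications.

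For the ``only if'' direction, I would argue contrapositively: assume $M$ is not $L^{\lambda}$-irreducible, so that there exists a proper nonzero $L^{\lambda}$-submodule $N \subset M$. By the remark preceding Theorem \ref{submoduleisomorphictoinducedodule}, the canonical embedding $\Ind^L_{L^{\lambda}}(N,S) \hookrightarrow \Ind^L_{L^{\lambda}}(M,S)$ realises $\Ind^L_{L^{\lambda}}(N,S)$ as an $L$-submodule. To see that this submodule is both nonzero and proper, I would apply the dimension formula of Proposition \ref{dimind}: the induced module has dimension $p^{\dim_F L/L^{\lambda}}\cdot \dim_F N$, which is strictly between $0$ and $\dim_F \Ind^L_{L^{\lambda}}(M,S)$ since $0 < \dim_F N < \dim_F M$. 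Hence $\Ind^L_{L^{\lambda}}(M,S)$ is not $L$-irreducible.

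For the ``if'' direction, suppose $M$ is $L^{\lambda}$-irreducible, and let $W$ be a nonzero $L$-submodule of $\Ind^L_{L^{\lambda}}(M,S)$. By Theorem \ref{submoduleisomorphictoinducedodule} there is an $L^{\lambda}$-submodule $N \subseteq M$ with $W \cong \Ind^L_{L^{\lambda}}(N,S)$. The irreducibility of $M$ forces $N = 0$ or $N = M$. The case $N = 0$ is ruled out by the dimension count of Proposition \ref{dimind}, since that would give $\dim_F W = 0$, contradicting $W \neq 0$. Therefore $N = M$, which yields $W = \Ind^L_{L^{\lambda}}(M,S)$, establishing $L$-irreducibility.

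There is essentially no obstacle here: the genuine work has already been done in Theorem \ref{submoduleisomorphictoinducedodule}, and the only points to check are the nonvanishing and properness of the induced submodules, both of which follow from the explicit dimension formula in Proposition \ref{dimind}.
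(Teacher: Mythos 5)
Your proposal is correct and follows essentially the same route as the paper: both directions rest on Theorem \ref{submoduleisomorphictoinducedodule}, with the forward implication handled via the canonical embedding of $\Ind^L_{L^{\lambda}}(N,S)$. The only difference is cosmetic --- the paper dismisses the ``only if'' direction as clear, whereas you justify nonvanishing and properness explicitly with the dimension formula of Proposition \ref{dimind}, which is a reasonable (if slightly heavier than necessary) way to make that step precise.
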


\begin{proof}
It is clear that the $L$-irreducibility of $\Ind^L_{L^{\lambda}}(M, S)$ ensures that $M$ is $L^{\lambda}$-irreducible. Conversely, suppose that $M$ is irreducible. Let $W$ be an $L$-submodule of $\Ind^L_{L^{\lambda}}(M, S)$. By Theorem \ref{submoduleisomorphictoinducedodule}, there is an $L$-submodule $N$ of $M$ such that $W\cong \Ind^L_{L^{\lambda}}(N, S)$. By the irreducibility of $M$, $N=0$ or $N=M$. It follows that $W=0$ or $W=\Ind^L_{L^{\lambda}}(M, S)$. In other words, $\Ind^L_{L^{\lambda}}(M, S)$ is $L$-irreducible.
\end{proof}

For any $L$-module $V$, define $V^{\lambda}:=\{v\in V \mid y\cdot v = \lambda(y)v, \text{ for all } y\in I\}$. Note that $V^{\lambda}$ is an $L^{\lambda}$-submodule of $V$.

\begin{Corollary}\label{VinducVlambisoandchar}
Let $\rho: L \rightarrow \gl(V)$ be an irreducible representation of a finite-dimensional restricted Lie algebra. Suppose $I \lhd L$ is an ideal.

\begin{enumerate}
\item[(1)]
Assume $\rho$ is an $S$-representation and there exists a linear form $\lambda \in I^*$, $\lambda(I^{(1)})=0$ such that $V^{\lambda}\neq 0$. Then $V \cong \Ind^L_{L^{\lambda}}(V^{\lambda}, S)$ and $V^{\lambda}$ is an irreducible $L^{\lambda}$-module.

\item[(2)] If $F$ is algebraically closed and $I^{(1)}$ operates nilpotently on $V$, then there exists a character $S\in L^*$ and $\lambda\in I^*$ where $\lambda(I^{(1)})=0$ such that $V\cong \Ind^L_{L^{\lambda}}(V^{\lambda}, S)$.

\end{enumerate}
\end{Corollary}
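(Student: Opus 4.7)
The plan is to handle the two parts in order, reducing (2) to (1) once the data $S$ and $\lambda$ have been manufactured.

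For (1) I would first check that $V^{\lambda}$ is an $L^{\lambda}$-submodule of $V$ with character $S\vert_{L^{\lambda}}$. For $x\in L^{\lambda}$, $v\in V^{\lambda}$, and $y\in I$, the identity $y\cdot(x\cdot v)=x\cdot(y\cdot v)+[y,x]\cdot v=\lambda(y)(x\cdot v)+\lambda([y,x])v$ collapses, via $\lambda([x,y])=0$ for $x\in L^{\lambda}$, to $\lambda(y)(x\cdot v)$, so $x\cdot v\in V^{\lambda}$; the character condition is inherited from $V$. The inclusion $\psi:V^{\lambda}\hookrightarrow V$ is then an $L^{\lambda}$-module map, and Theorem \ref{IndandVhm} produces $\varphi:\Ind^L_{L^{\lambda}}(V^{\lambda},S)\to V$ with $\varphi(x\otimes m)=x\cdot m$. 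Since $V$ is irreducible and $\psi\neq 0$, $\varphi$ is surjective. For injectivity I would apply Theorem \ref{submoduleisomorphictoinducedodule} to $W=\ker(\varphi)$: the associated $L^{\lambda}$-submodule $N=\{m\in V^{\lambda}\mid 1\otimes m\in\ker(\varphi)\}$ equals $\{m\in V^{\lambda}\mid m=0\}=0$, so $\ker(\varphi)\cong\Ind^L_{L^{\lambda}}(0,S)=0$. Irreducibility of $V^{\lambda}$ as an $L^{\lambda}$-module then follows from Corollary \ref{indirredifforgirred}: a proper $L^{\lambda}$-submodule $N\subsetneq V^{\lambda}$ would induce the proper $L$-submodule $\Ind^L_{L^{\lambda}}(N,S)\subsetneq V$.

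For (2) I would manufacture $S$ and $\lambda$ and then reduce to (1). The character $S\in L^*$ is delivered by Theorem \ref{Spxpx[p]=S}, using that $F$ is algebraically closed and $V$ is $L$-irreducible. To produce $\lambda$, apply Lemma \ref{derivedsubalgebraandlinear}(3) to the image of $I$ in $\gl(V)$: the hypotheses that $I^{(1)}$ acts nilpotently, $F$ is algebraically closed, $I$ is an ideal of $L$, and $V$ is $L$-irreducible deliver a linear $\lambda:I\to F$ sending each $y\in I$ to its unique eigenvalue on $V$, together with $\rho(I^{(1)})=0$, whence $\lambda(I^{(1)})=0$. Lemma \ref{derivedsubalgebraandlinear}(1) then provides a common eigenvector $0\neq v\in V$ with eigenvalue function $\lambda$, so $V^{\lambda}\neq 0$ and part (1) finishes the argument.

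The step I expect to be the main technical obstacle is the injectivity of $\varphi$ in part (1). Although it reads almost tautologically once Theorem \ref{submoduleisomorphictoinducedodule} is on hand, the substance of the argument is really packaged inside that theorem, which in turn rests on the commutator identity of Lemma \ref{equationaboutyes} and an induction on the filtration degree. Everything else in both parts is either a one-line quotation of an earlier result or a short bracket computation.
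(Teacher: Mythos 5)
Your proposal is correct and follows essentially the same route as the paper: Theorem \ref{IndandVhm} gives the surjection $\varphi$, Theorem \ref{submoduleisomorphictoinducedodule} applied to $\ker(\varphi)$ (which meets $1\otimes V^{\lambda}$ trivially) gives injectivity, Corollary \ref{indirredifforgirred} gives irreducibility of $V^{\lambda}$, and part (2) is reduced to part (1) via Theorem \ref{Spxpx[p]=S} and Lemma \ref{derivedsubalgebraandlinear}. You actually supply more detail than the paper does, in particular the bracket computation showing $V^{\lambda}$ is an $L^{\lambda}$-submodule, which the paper only asserts.
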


\begin{proof}
\begin{enumerate}
\item[(1)]
By Theorem \ref{IndandVhm}, there is a homomorphism $\varphi: \Ind^L_{L^{\lambda}}(V^{\lambda}, S) \rightarrow V$ of $L$-modules such that $\varphi\neq0$ and $V$ is irreducible. Since $V$ is irreducibel and $\varphi \neq 0$, $\varphi$ is surjective and $\ker \varphi$ is an $L$-submodule of $\Ind^L_{L^{\lambda}}(V^{\lambda}, S)$ which intersects $1\otimes V^{\lambda}$ trivially. By Theorem \ref{submoduleisomorphictoinducedodule}, $\ker\varphi=0$. Thus, $\varphi$ is isomorphism. By Corollary \ref{indirredifforgirred}, $V^{\lambda}$ is irreducible.

\item[(2)]
By Theorem \ref{Spxpx[p]=S}, Lemma \ref{derivedsubalgebraandlinear}, and (1) that $\varphi$ is an isomorphism, (2) holds.
\end{enumerate}
\end{proof}

If $J\lhd L$ is an abelian ideal, then $I:= J+C(L)$ is an abelian $p$-ideal. By Corollary \ref{VinducVlambisoandchar}, $\lambda(y^p)-\lambda(y^{[p]})={S(y)}^p$ for all $y\in I$. If the $p$-mapping is trivial on the center of $L$, then $y^{[p]^2}=0$ for all $y\in I$ and $\lambda(y)=S(y)+S(y^{[p]})^{1/p}$ for all $y\in L$. Let $I\lhd L$ be an ideal. A linear mapping $\lambda \in I^*$ where $\lambda(I^{(1)})=0$ is called \textit{an eigenvalue function for an $L$-module $V$}\index{an eigenvalue function for an $L$-module $V$} if $V^{\lambda}\neq 0$. Let $S\in L^*$ and $\lambda\in I^*$ where $\lambda(I^{(1)})=0$. Define $A^{\lambda}_S$ as the set of isomorphism classes of irreducible $L$-modules with the character $S$ and the eigenvalue function $\lambda$ and $B^{\lambda}_S$ as the set of isomorphism classes of irreducible $L^{\lambda}$-modules with the character $S\mid_{L^{\lambda}}$ and the eigenvalue function $\lambda$.

\begin{Theorem}\label{ASlambisoBSlamb}
The mapping $\Gamma: A^{\lambda}_S \rightarrow B^{\lambda}_S$ defined by $[V] \mapsto [V^{\lambda}]$ is bijective.
\end{Theorem}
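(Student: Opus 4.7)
The plan is to construct an explicit two-sided inverse $\Psi \colon B^{\lambda}_S \to A^{\lambda}_S$ given by $[M] \mapsto [\Ind^L_{L^{\lambda}}(M, S)]$. The essential ingredient is Corollary \ref{VinducVlambisoandchar}(1), which reconstructs any irreducible $L$-module of character $S$ and eigenvalue function $\lambda$ from its $\lambda$-weight space via induction; together with Corollary \ref{indirredifforgirred}, this should pin both directions down.

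First I would check that $\Gamma$ and $\Psi$ are well-defined on isomorphism classes. For $\Gamma$: given $[V] \in A^{\lambda}_S$, Corollary \ref{VinducVlambisoandchar}(1) yields that $V^{\lambda}$ is an irreducible $L^{\lambda}$-module; it has character $S|_{L^{\lambda}}$ because $V$ has character $S$, and eigenvalue function $\lambda$ because every element of $V^{\lambda}$ tautologically satisfies the $\lambda$-eigenvalue condition. For $\Psi$: given $[M] \in B^{\lambda}_S$, Corollary \ref{indirredifforgirred} forces $V := \Ind^L_{L^{\lambda}}(M, S)$ to be $L$-irreducible, and $V$ is automatically an $S$-representation because it is a $u(L,S)$-module. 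Since $\lambda(I^{(1)}) = 0$ implies $\lambda([y,z]) = 0$ for all $y,z \in I$, we have $I \subset L^{\lambda}$, so for any $y \in I$ and $m \in M$,
\[
y \cdot (1 \otimes m) = y \otimes m = 1 \otimes (y \cdot m) = \lambda(y)\,(1 \otimes m),
\]
which shows $1 \otimes M \subset V^{\lambda}$ and hence $V^{\lambda} \neq 0$, so $[V] \in A^{\lambda}_S$.

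Now the two identities. That $\Psi \circ \Gamma = \id$ is immediate: Corollary \ref{VinducVlambisoandchar}(1) says $V \cong \Ind^L_{L^{\lambda}}(V^{\lambda}, S)$ whenever $[V] \in A^{\lambda}_S$. For $\Gamma \circ \Psi = \id$, set $V := \Ind^L_{L^{\lambda}}(M, S)$; applying Corollary \ref{VinducVlambisoandchar}(1) to $V$ itself yields that $V^{\lambda}$ is an irreducible $L^{\lambda}$-module. The map $m \mapsto 1 \otimes m$ is an $L^{\lambda}$-equivariant injection $M \hookrightarrow V^{\lambda}$ (injectivity because $M$ embeds into $\Ind^L_{L^{\lambda}}(M, S)$ by Proposition \ref{dimind}), so its image is a nonzero $L^{\lambda}$-submodule of the irreducible $V^{\lambda}$. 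Hence $1 \otimes M = V^{\lambda}$, and therefore $M \cong V^{\lambda} = \Gamma(\Psi([M]))$.

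The real technical content is not in this theorem but is already absorbed by Corollary \ref{VinducVlambisoandchar}(1) (which extracts the inverse induction) and Theorem \ref{submoduleisomorphictoinducedodule} (which, through Lemma \ref{equationaboutyes}, controls submodules of $\Ind^L_{L^{\lambda}}(M,S)$). Given those, the present statement reduces to formal bookkeeping, with only the verification that $I \subset L^{\lambda}$ and the $L^{\lambda}$-equivariance of $m \mapsto 1 \otimes m$ requiring explicit attention.
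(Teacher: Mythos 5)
Your proposal is correct and follows essentially the same route as the paper: both define the inverse $[M]\mapsto[\Ind^L_{L^{\lambda}}(M,S)]$, check well-definedness via Corollary \ref{indirredifforgirred} and Corollary \ref{VinducVlambisoandchar}(1), and verify the two compositions using $1\otimes M\subset V^{\lambda}$ together with $V\cong\Ind^L_{L^{\lambda}}(V^{\lambda},S)$. The only cosmetic difference is that you deduce $1\otimes M=V^{\lambda}$ from the irreducibility of $V^{\lambda}$, whereas the paper reads it off from the isomorphism $V\cong\Ind^L_{L^{\lambda}}(V^{\lambda},S)$; both are sound.
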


\begin{proof}
First, we want to prove that $\Gamma$ is well-defined. If $V$ is irreducible with the character $S$, then by Corollary \ref{VinducVlambisoandchar}, $V^{\lambda}$ is irreducible with the character $S\mid_{L^{\lambda}}$, so $\Gamma$ is well-defined. Define an inverse mapping $\theta: B^{\lambda}_S \rightarrow A^{\lambda}_S$ such that $[M]\mapsto [\Ind^L_{L^{\lambda}}(M, S)]$. By Corollary \ref{indirredifforgirred}, $\theta$ is well-defined. Let $M$ be irreducible $L^{\lambda}$-module with character $S\mid_{L^{\lambda}}$. Define $V:=\Ind^L_{L^{\lambda}}(M, S)$. Then $V$ is irreducible and $1\otimes M \subset V^{\lambda}$. By Corollary \ref{VinducVlambisoandchar}, $V \cong \Ind^L_{L^{\lambda}}(V^{\lambda}, S)$. This implies that $V^{\lambda}=1\otimes M$ and thus $\Gamma \circ \theta([M])=[1\otimes M]=[M]$. Conversely, assume that $V$ is an irreducible $L$-module with character $S$. By Corollary \ref{VinducVlambisoandchar}, $V\cong \Ind^L_{L^{\lambda}}(V^{\lambda}, S)$. This shows that $[V]=\theta(\Gamma([V]))$.
\end{proof}

\chapter{Irreducible $L$-Modules}
We want to determine irreducible representations of a modular Lie algebra $L$. The following examples refer to \cite[Section 9, Chapter 5]{SF}.

\begin{Example}
Let $L=Fe\oplus Fh\oplus Ff\oplus Fu\oplus Fv$ where $he=2e$, $hf=-2f$, $hu=u$, $hv=-v$, $ef=h$, $eu=0$, $ev=u$, $fu=v$, $fv=0$, and $uv=0$. $L$ is built by $\Sl(2)$ and its standard two-dimensional module $I=Fu\oplus Fv$. Note that $I$ is an abelian ideal of $L$. Let $V$ be an irreducible $L$-module and $F$ be an algebraically closed. By Lemma \ref{derivedsubalgebraandlinear}, there exists a linear eigenvalue function $\lambda: I \rightarrow F$ such that $\rho(x)-\lambda(x)\id_V$ is nilpotent for all $x\in I$.

Assume that $\lambda=0$. Then, by Lemma \ref{equationaboutyes}, $L^{\lambda}=L$. $V^{\lambda}\neq0$ is an $L$-submodule. By the irreducibility of $V$, $V=V^{\lambda}$. This implies $\rho(x)=0$ for all $x\in I$ and thus $\rho$ is an irreducible representation of $\Sl(2,F)$.

If $\lambda\neq0$, then $L^{\lambda}=F({\lambda(v)}^2e+\lambda(u)\lambda(v)h-{\lambda(u)}^2f)\oplus Fu\oplus Fv$. $(L^{\lambda})^{(1)}\subset Fu+Fv$. As $I=Fu+Fv$ is an abelian ideal, $L^{\lambda}$ is solvable.
\end{Example}

By Corollary \ref{VinducVlambisoandchar} and Theorem \ref{ASlambisoBSlamb}, the irreducible $L$-modules biject to the irreducible $L^{\lambda}$-modules. Hence, all irreducible $L$-modules are determined by $\Sl(2,F)$ and a certain solvable subalgebra of $L$.

The above example is for observation. With the above theoretical observations, the following section focuses on finding all irreducible $L$-modules for given modular Lie algebras. 

\section{The Isomorphism Classes of $S$-representations}

Every irreducible representation $\rho:L \rightarrow \gl(V)$ is uniquely determines a character $S\in L^*$. Every $S\in L^*$ has a character of some irreducible representation. From this, we describe the isomorphism classes of $S$-representations. For given $S$, determine that an ideal $I$ is maximal among all those ideals $J$ such that $J^{(1)}$ operates nilpotently on $V$ and find all those $\lambda\in I^*$ such that $V^{\lambda}\neq0$. By the above results, any irreducible $L$-module $V$ is induced by $V^{\lambda}$ and $V^{\lambda}$ is an irreducible $L^{\lambda}$-module.

\begin{Example}\label{hxisoclasses}
Consider $L:= Fh\oplus Fx$ where $hx=x$, $h^{[p]}=h$ and $x^{[p]}=0$, the unique two-dimensional restricted Lie algebra. Let $\rho:L \rightarrow \gl(V)$ be an irreducible representation. Note that any irreducible representations of $L$ are in bijection to characters $S\in L^*$. Thus, there is a unique character $S\in L^*$.

\begin{enumerate}
\item[(a)] Suppose $S(x)=0$. Note that ${\rho(x)}^p=\rho(x^{[p]})+{S(x)}^p\id_V=0$ as $x^{[p]}=0$ and $S(x)=0$. Then $I:= Fx$ is an ideal of $L$ which acts nilpotently on $V$. By the irreducibility of $V$, $\rho(x)=0$. This shows that $V$ is an irreducible $L/Fx$-module. $V$ is one-dimensional. $V=Fv$ where $x \cdot v=0$ and $h\cdot v=\alpha v$ for some $\alpha\in I$. To determine $\alpha$, note that $\rho(h)=\alpha\id_V$. ${S(h)}^p\id_V={\rho(h)}^p-\rho(h^{[p]})={\rho(h)}^p-\rho(h)=(\alpha^p-\alpha)\id_V$. This implies $\alpha$ is a solution of the equation $X^p-X={S(h)}^p$ in $F$. For $S$, there are $p$ nonisomorphic irreducible modules determined by the eigenvalue of $\rho(h)$ and they are all one-dimensional.
\begin{align*}
V=Fv,\;\;\quad\text{where} \; \quad& x \cdot v=0,\quad h\cdot v=\alpha v \;\text{  for some }\alpha\in I\\
&\text{where $\alpha$ is a solution of $X^p-X={S(h)}^p$.}
\end{align*}

\item[(b)]
Suppose $S(x)\neq 0$. Then ${\rho(x)}^p=\rho(x^{[p]})+{S(x)}^p\id_V={S(x)}^p\id_V$ as $x^{[p]}=0$. ${(\rho(x)-S(x)\id_V)}^p={\rho(x)}^p-{S(x)}^p\id_V=0$. This shows that $S(x)$ is the only eigenvalue of $\rho(x)$. The linear map $\lambda=S|_{Fx}\in {(Fx)}^*$ is the only eigenvalue function for which $V^{\lambda}\neq(0)$. $L^{\lambda}=Fx$ and $V^{\lambda}$ is $L^{\lambda}$-irreducible. As any irreducible module of an abelian ideal is one-dimensional, this implies that the dimension of $V^{\lambda}$ is one. For a given $S$, $V^{\lambda}$ is uniquely determined by
\begin{align*}
V^{\lambda}=Fv, \;\;\quad\text{where} \;\quad x \cdot v=S(x)v.
\end{align*}
By Theorem \ref{ASlambisoBSlamb}, $V$ is uniquely determined by $S$, that is,
\begin{align*}
V=\Ind^L_{L^{\lambda}}(V^{\lambda}, S) \quad\text{where $L^{\lambda}= Fx$, $V^{\lambda}=Fv$ such that $x \cdot v=S(x)v$}.
\end{align*}

By Proposition \ref{dimind}, $\dim_FV=p$.
\end{enumerate}
\end{Example}

\begin{Example}
Let $L:= Ft\oplus Fx\oplus Fy \oplus Fz$ where $tx=x$, $ty=-y$, $xy=z$, $zL=0$, $t^{[p]}=t$, $x^{[p]}=y^{[p]}=z^{[p]}=0$. By a direct computation, the only ideals of $L$ are $\{0\}$, $Fz$, $Fz+Fx$, $Fz+Fy$, $Fz+Fx+Fy$ and $L$.

\begin{enumerate}
\item[(a)] Suppose $S(z)=0$. Then $\rho(z)$ is nilpotent. Let $I=Fz+Fx+Fy$. Then $I$ is an ideal such that $I^{(1)}$ operates nilpotently on $V$. As $x^{[p]}=y^{[p]}=z^{[p]}=0$, $S(x)$, $S(y)$, $S(z)$ is the only eigenvalue for $\rho(x)$, $\rho(y)$, $\rho(z)$, respectively. This shows that $\lambda=S\mid_I$ is the only eigenvalue function of $I$.

\item[(a.1)] Suppose $S(I)=0$. This implies $I$ acts nilpotently on $V$ i.e., $I\cdot V=0$. By the irreducibility of $V$, $\rho|_I=0$. It follows that $V$ is an irreducible $L/I$-module. Thus, $V$ is one-dimensional. $V=Fv$ where $x \cdot v=0$, $y\cdot v=0$, $z\cdot v=0$, $h\cdot v=\alpha v$ for some $\alpha\in I$. This is exactly the same case as Example \ref{hxisoclasses}(a). For $S$, there are $p$ nonisomorphic irreducible modules determined by the eigenvalue of $\rho(h)$ and they are all one-dimensional.
\begin{align*}
\text{$V=Fv$, where $x \cdot v=0$, $y\cdot v=0$, $z\cdot v=0$, $h\cdot v=\alpha v$ for some $\alpha\in I$}\\
\text{where $\alpha$ is a solution of $X^p-X={S(h)}^p$}.
\end{align*}

\item[(a.2)] Suppose $S(I)\neq 0$. Then $L^{\lambda}\neq L$. Then $L^{\lambda}$ is a subalgebra containing a maximal ideal $I$ and thus $L^{\lambda}=I$. As $V^{\lambda}$ is $I$-irreducible, $\dim_FV^{\lambda}=1$. 
\begin{align*}
\text{$V^{\lambda}=Fv$, where $z\cdot v=0$, $x \cdot v=S(x)v$, $y\cdot v =S(y)v$.}
\end{align*}
Note that $V^{\lambda}$ is uniquely determined by $S$. By Corollary \ref{VinducVlambisoandchar}, $V$ is induced by $V^{\lambda}$, so $V$ is also uniquely determined by $S$. By Proposition \ref{dimind}, $\dim_FV=p$.
\begin{align*}
\text{$V=\Ind^L_{L^{\lambda}}(V^{\lambda}, S)$}& \text{ where $L^{\lambda}=Fx+Fy+Fz$},\\
&\text{$V^{\lambda}=Fv$ such that $z\cdot v=0$, $x \cdot v=S(x)v$, $y\cdot v =S(y)v$.}
\end{align*}

\item[(b)] Suppose $S(z)\neq0$. Let $I=Fz+Fx$. Then $I$ is an ideal such that $I^{(1)}$ operates nilpotently on $V$. As $x^{[p]}=z^{[p]}=0$, $S(x)$ and $S(z)$ is the only eigenvalue for $\rho(x)$ and $\rho(z)$, respectively. This shows that $\lambda=S\mid_I$ is the only eigenvalue function of $I$. $L^{\lambda}=Fz+Fx+F(t+S(x)+{S(z)}^{-1}y)$. By Corollary \ref{VinducVlambisoandchar} and Theorem \ref{ASlambisoBSlamb}, any irreducible $L$-module $V$ with character $S$ has a corresponding irreducible $L^{\lambda}$-module with character $S\mid_{L^{\lambda}}$ and vice versa. Moreover, $\dim_FV=p\dim_FV^{\lambda}$. To find all irreducible $L$-module $V$, we first find all possible $V^{\lambda}$. Define $h:=t+S(x){S(z)}^{-1}y$ and $x':=x-S(x){S(z)}^{-1}z$. Note that $S(x')=0$. $L^{\lambda}=Fh\oplus Fx'\oplus Fx$ such that $hx'=x'$. Since $z$ is central by the equation $zL=0$, $V^{\lambda}$ is irreducible and also is irreducible for $Fh\oplus Fx'$. Define $L'^{\lambda}:= Fh\oplus Fx'$ where $hx'=x'$, $h^{[p]}=h$ and $x'^{[p]}=0$. Then this is exactly the case (a) in Example \ref{hxisoclasses}. As $x'^{[p]}=0$ and $S(x')=0$, ${\rho(x')}^p=\rho(x'^{[p]})+{S(x')}^p\id_V=0$. $I:=Fx'$ is an ideal of $L$ which acts nilpotently on $V$. By the irreducibility of $V'$, $\rho(x')=0$. Then $V'$ is an irreducible $L/Fx'$-module and thus $V'$ is one-dimensional. $V'=Fv'$ where $x' \cdot v'=0$ and $h\cdot v'=\alpha v'$ for some $\alpha\in I$.

To determine $\alpha$, note that $\rho(h)=\alpha\id_V$. ${S(h)}^p\id_V={\rho(h)}^p-\rho(h^{[p]})={\rho(h)}^p-\rho(h)=(\alpha^p-\alpha)\id_V$. This implies $\alpha$ is a solution of the equation $X^p-X={S(h)}^p$ in $F$. For $S$, there are $p$ nonisomorphic irreducible $L'^{\lambda}$-modules of $V'$ determined by the eigenvalue of $\rho(h)$ and they are all one-dimensional.
\begin{align*}
V'=Fv,\;\;\quad\text{where} \; \quad &x' \cdot v=0,\quad h\cdot v=\alpha v \;\text{  for some }\alpha\in I\\
&\text{where $\alpha$ is a solution of $X^p-X={S(h)}^p$.}
\end{align*}
 For $S$, there are $p$ nonisomorphic irreducible $L^{\lambda}$-modules $V^{\lambda}$ determined by the eigenvalue of $\rho(h)$ and they are all one-dimensional.
\begin{align*}
V^{\lambda}=Fv,\;\;\quad\text{where} \; \quad x' \cdot v=0,\quad &h\cdot v=\alpha v  \quad z\cdot v=S(z)v\;\text{  for some }\alpha\in I\\
&\text{where $\alpha$ is a solution of $X^p-X={S(h)}^p$.}
\end{align*}
By Theorem \ref{ASlambisoBSlamb}, there are exactly $p$ nonisomorphic irreducible $L$-modules V. By Proposition \ref{dimind}, $\dim_FV=p$.
\begin{align*}
V=\Ind^L_{L^{\lambda}}&(V_{\alpha}, S) \\
&\text{where $L^{\lambda}=Fh\oplus Fx'\oplus Fx$, $V_{\alpha}=Fv$ such that $z\cdot v =S(z)v$,}\\
&\text{$x' \cdot v=S(x')v$, $(t+S(x){S(z)}^{-1}y)\cdot v =\alpha v$ }\\
&\text{where $\alpha$ is a solution of $X^p-X={S(h)}^p$. }
\end{align*}

\begin{Example}
Let $L=Fh\oplus Fx\oplus Fy$ where $hx=x$, $hy=\alpha y$ and $xy=0$. Need additional condition to make $L$ restricted, or more weakly, restrictable. 

\begin{enumerate}
\item[(a)] If $\alpha^p=\alpha$, then $L$ is restrictable. Then $h^{[p]}=h$ and $x^{[p]}=y^{[p]}=0$. Choose $I=Fx+Fy$. Then $I$ is an ideal such that $I^{(1)}$ operates nilpotently on $V$. As $x^{[p]}=y^{[p]}=0$, ${\rho(x)}^p=\rho(x^{[p]})+{S(x)}^p\id_V={S(x)}^p\id_V$. ${(\rho(x)-S(x)\id_V)}^p={\rho(x)}^p-{S(x)}^p\id_V=0$. This shows that $S(x)$ is the only eigenvalue of $\rho(x)$. Similarly, $S(y)$ is the only eigenvalue for $\rho(y)$. This implies that $\lambda=S|_I$ is the only eigenvalue function of $I$.
\item[(a.1)] Assume that $S(I)=0$. Then $I$ acts nilpotently on $V$, i.e., $I\cdot V=0$. By the irreducibility of $V$, $\rho|_I=0$. This shows that $V$ is an irreducible $L/I$-module. Since any irreducible abelian module is one-dimensional, $V$ is one-dimensional. Thus, $V=Fv$ where $x \cdot v=0$ and $h\cdot v=\beta v$ for some $\beta\in I$. By the equation $h\cdot v=\beta v$, $\rho(h)=\beta \id_V$. Note that $h^{[p]}=h$. By the definition of character, ${S(h)}^p\id_V={\rho(h)}^p-\rho(h^{[p]})={\rho(h)}^p-\rho(h)=(\beta^p-\beta)\id_V$ and this implies that $\beta$ is a solution of $X^p-X={S(h)}^p$. For $S$, there are $p$ nonisomorphic irreducible modules determined by the eigenvalue of $\rho(h)$ and they are all one-dimensional as $\dim_FV=1$.
\begin{align*}
V=Fv,\;\;\quad\text{where} \; \quad& x \cdot v=0,\quad h\cdot v=\beta v \;\text{  for some }\beta\in I\\
&\text{where $\beta$ is a solution of $X^p-X={S(h)}^p$.}
\end{align*}

\item[(a.2)] Assume that $S(I)\neq 0$. Then $L^{\lambda}\neq L$. As $L^{\lambda}$ is a subalgebra containing $I$, $L^{\lambda}=I$. Then $V^{\lambda}$ is $I$-irreducible, so $\dim_FV^{\lambda}=1$.
\begin{align*}
\text{$V^{\lambda}=Fv$ where $x\cdot v=S(x)v$, $y \cdot v=S(y)v$.}
\end{align*}
By Corollary \ref{VinducVlambisoandchar}, $V$ is induced by $V^{\lambda}$. Then $V$ is uniquely determined by $S$. By Proposition \ref{dimind}, $\dim_FV=p$.
\begin{align*}
\text{$V=\Ind^L_{L^{\lambda}}(V^{\lambda}, S)$}& \text{ where $L^{\lambda}=Fx+Fy$},\\
&\text{$V^{\lambda}=Fv$ such that $x \cdot v=S(x)v$, $y\cdot v =S(y)v$.}
\end{align*}

\item[(b)]
Assume that $L$ is nonrestrictable. Then $\alpha^p\neq \alpha$. Embed $L$ to a four-dimensional algebra $L':=L\oplus Ft$ where $hx=x$, $hy=\alpha y$, $xy=0$, $ht=0$, $tx=x$, and $ty=\alpha^py$, $x^{[p]}:=y^{[p]}:=0$, $h^{[p]}:=t$ and $(h-t)^{[p]}:={(\alpha-\alpha^p)}^{p-1}(h-t)$. Then $L'$ is restricted. Moreover, $L'$ is the minimal $p$-envelope. As $ht=0$, $t^{[p]}=h^{[p]}-{(\alpha-\alpha^p)}^{p-1}(h-t)=[{(\alpha-\alpha^p)}^{p-1}+1]t-{(\alpha-\alpha^p)}^{p-1}h$.

Let $\rho: L \rightarrow \gl(V)$ be an irreducible representation of $L$. By Theorem \ref{existextendreplsubmoduleGsubmodule}, there exists an irreducible representation $\rho': L' \rightarrow \gl(V)$ of $L'$. Let $S'$ be a character of $\rho'$. As ${\rho'(h)}^p-\rho'(h^{[p]})={S'(h)}^p\id_V$, ${\rho'(h)}^p-\rho'(t)={S'(h)}^p\id_V$. Define $\rho''$ such that $\rho''|_L=\rho$ and $\rho''(t):=\rho'(t)+{S'(h)}^p\id_V$. Then $\rho''$ is another extension of $\rho$ whose character $S''$ vanishes on $h$. Then the irreducible representations of $L$ correspond uniquely to the irreducible $S''$-representations of $L'$ which satisfy $S''(h)=0$. Therefore, we determine the irreducible $S''$-representations of $L'$ such that $S''(h)=0$.

\item[(b.1)] Assume $S''(x)=S''(y)=0$. Let $I=Fx+Fy$. Then $I$ is an ideal such that $I^{(1)}$ operates nilpotently on $V$. Note that $S''(I)=0$. This implies $I$ acts nilpotently on $V$, i.e., $I\cdot V=0$. By the irreducibility of $V$, $\rho''|_I=0$. This implies that $V$ is an irreducible $L/I$-module. As $L/I$ is abelian and $V$ is irreducible, $V$ is one-dimensional.
\begin{align*}
\text{$V=Fv$, where $x \cdot v=0$, $y\cdot v=0$, $h\cdot v=0$, $t\cdot v=\beta v$ for some $\beta\in I$}.
\end{align*}
${S''(t)}^p\id_V={\rho''(t)}^p-\rho''(t^{[p]})={\rho''(t)}^p-\rho''(t)=(\beta^p-\beta)\id_V$. This implies $\beta$ is a solution of the equation $X^p-X={S''(t)}^p$. For $S''$, there are $p$ nonisomorphic irreducible $L$-modules of $V$ determined by the eigenvalue of $\rho''(t)$ and they are all one-dimensional.
\begin{align*}
\text{$V=Fv$, where $x \cdot v=0$, $y\cdot v=0$, $h\cdot v=0$, $t\cdot v=\beta v$ for some $\beta\in I$}.\\
\text{where $\beta$ is a solution of $X^p-X={S''(t)}^p$.}
\end{align*}

\item[(b.2)]
Assume $S''(x)\neq0$ and $S''(y)=0$. Then $L^{\lambda}\neq L$. $L^{\lambda}=Fx\oplus Fy\oplus F(h-t)$. Noe that $V^{\lambda}$ is irreducible. As $x$ is central by the equations $xy=0$ and $x(h-t)=0$, $V^{\lambda}$ is irreducible for $Fy\oplus F(h-t)$.

Define $L'^{\lambda}:= Fy\oplus F(h-t)$. Let $I:=Fy$. Then $I$ is an ideal of $L$ which acts nilpotently on $V'^{\lambda}$. By the irreducibility of $V'^{\lambda}$, $\rho''(y)=0$. This implies $V'^{\lambda}$ is an irreducible $L/Fy$-module, so $V'^{\lambda}$ is one-dimensional.
\begin{align*}
\text{$V'^{\lambda}=Fv$, where $y\cdot v=0$, $(h-t)\cdot v=\gamma v$ for some $\gamma \in I$}.
\end{align*}
We want to specify the condition of $\gamma$. $S''(h-t)^p\id_{V'}=\rho''(h-t)^p-\rho''((h-t)^{[p]})=\rho(h-t)^p-\rho((\alpha-\alpha^p)^{p-1}(h-t))=\rho(h-t)^p-(\alpha-\alpha^p)^{p-1}\rho(h-t)=(\gamma^p-(\alpha-\alpha^p)^{p-1}\gamma)\id_{V'}$. This implies $\gamma$ is a solution of $X^p-(\alpha-\alpha^p)^{p-1}X=S''(h-t)^p$. For $S''$, there are $p$ nonisomorphic irreducible $L'^{\lambda}$-modules of $V'$ determined by the eigenvalue of $\rho''(h-t)$ and they are all one-dimensional.
\begin{align*}
\text{$V'=Fv$, where $y \cdot v=0$, $(h-t)\cdot v=\gamma v$ for some $\gamma \in I$}\\
\text{where $\gamma$ is a solution of $X^p-(\alpha-\alpha^p)^{p-1}X={S''(h-t)}^p$.}
\end{align*}
For $S'$, there are $p$ nonisomorphic irreducible $L^{\lambda}$-modules of $V^{\lambda}$ determined by the eigenvalue of $\rho''(h-t)$ and they are all one-dimensional.
\begin{align*}
\text{$V^{\lambda}=Fv$, where $x \cdot v=S''(x)v$, $y \cdot v=0$, $(h-t)\cdot v=\gamma v$ for some $\gamma \in I$}\\
\text{where $\gamma$ is a solution of $X^p-(\alpha-\alpha^p)^{p-1}X={S''(h-t)}^p$.}
\end{align*}
Then the irreducible $L'$-modules $V$ are
\begin{align*}
\text{$V=\Ind^{L'}_{L^{\lambda}}(V^{\lambda}, S'')$ where $L^{\lambda}=Fx+Fy+F(h-t)$},\\
\text{$V^{\lambda}=Fv$ such that $x \cdot v=S''(x)v$, $y \cdot v=0$, $(h-t)\cdot v=\gamma v$ for some $\gamma \in I$}\\
\text{where $\gamma$ is a solution of $X^p-(\alpha-\alpha^p)^{p-1}X={S''(h-t)}^p$.}
\end{align*}

\item[(c.3)]Assume $S''(x)=0$ and $S''(y)\neq0$. This is similar to the case (b.2).
\begin{align*}
\text{$V^{\lambda}=Fv$, where $x \cdot v=0$, $y \cdot v=S''(y) v$, $(h-t)\cdot v=\gamma v$ for some $\gamma \in I$}\\
\text{where $\gamma$ is a solution of $X^p-(\alpha-\alpha^p)^{p-1}X={S''(h-t)}^p$.}
\end{align*}

Then the irreducible $L'$-modules $V$ are
\begin{align*}
\text{$V=\Ind^{L'}_{L^{\lambda}}(V^{\lambda}, S'')$ where $L^{\lambda}=Fx+Fy+F(h-t)$},\\
\text{$V^{\lambda}=Fv$ such that $x \cdot v=0$, $y \cdot v=S''(y) v$, $(h-t)\cdot v=\gamma v$ for some $\gamma \in I$}\\
\text{where $\gamma$ is a solution of $X^p-(\alpha-\alpha^p)^{p-1}X={S''(h-t)}^p$.}
\end{align*}
\item[(b.4)] Assume $S''(x)\neq 0$ and $S''(y)\neq0$. Let $I=Fx+Fy$. Then $L^{\lambda}\neq L$. As $L^{\lambda}$ is a subalgebra containing $I$, $L^{\lambda}=I$. Then $V^{\lambda}$ is $I$-irreducible, and thus $\dim_FV^{\lambda}=1$.
 \begin{align*}
\text{$V^{\lambda}=Fv$ where $x\cdot v=S''(x)v$, $y \cdot v=S''(y)v$.}
\end{align*}
As $V=\Ind^L_{L^{\lambda}}(V^{\lambda}, S)$, $V$ is uniquely determined by $S''$. In other words, there is exactly one irreducible $L$-module. Moreover, $\dim_FV=p^{\dim_FL'/{L'}^{\lambda}}\cdot \dim_FV^{\lambda}=p^2\cdot 1=p^2$.
\begin{align*}
\text{$V=\Ind^{L'}_{L'^{\lambda}}(V^{\lambda}, S'')$ where ${L'}^{\lambda}=Fx+Fy$},\\
\text{$V^{\lambda}=Fv$ such that $x \cdot v=S''(x)v$, $y \cdot v=S''(y)v$}\\
\end{align*}
\end{enumerate}
\end{Example}

\end{enumerate}
\end{Example}

\chapter{Conclusions}

Lie algebras over fields of positive characteristic behave differently from those over fields of characteristic zero. With the introduction of the $p$-mapping, modular Lie algebras possess $p$-envelopes and restricted enveloping algebras. In particular, the structure of a restricted enveloping algebra depends on the $p$-mapping of the given Lie algebra. Modular representations admit an invariant, the \emph{character} $S$, and the set of induced modules with a given character corresponds to a set of submodules of the original modules.

If a given modular Lie algebra is not restricted, we consider its minimal $p$-envelope. By determining all irreducible modules of this minimal $p$-envelope, and using the one-to-one correspondence between induced modules and the original modules, we can obtain all irreducible modules of the given Lie algebra.

The central question arising from this project is how one might develop a more general notion of character that applies to a broader class of modular Lie algebras.

\bibliography{mainproject}{}

\begin{thebibliography}{1}

\bibitem{F}
Rolf Farnsteiner.
\newblock Restricted {L}ie algebras with semilinear {$p$}-mappings.
\newblock {\em Proc. Amer. Math. Soc.}, 91(1):41--45, 1984.

\bibitem{H}
James~E. Humphreys.
\newblock {\em Introduction to {L}ie algebras and representation theory},
  volume Vol. 9 of {\em Graduate Texts in Mathematics}.
\newblock Springer-Verlag, New York-Berlin, 1972.

\bibitem{N}
Nathan Jacobson.
\newblock {\em Lie algebras}.
\newblock Dover Publications, Inc., New York, 1979.
\newblock Republication of the 1962 original.

\bibitem{SF}
Helmut Strade and Rolf Farnsteiner.
\newblock {\em Modular {L}ie algebras and their representations}, volume 116 of
  {\em Monographs and Textbooks in Pure and Applied Mathematics}.
\newblock Marcel Dekker, Inc., New York, 1988.

\end{thebibliography}
\bibliographystyle{plain}

\appendix

\chapter{Multiplication Tables for Given Examples}

\section{Multiplication Table of $L$}\label{chartabofC3}

\begin{table}[h]
\centering
\begin{tabular}{ c | c  c c  }
\hline
 \hline
  $$   & $e$ & $f$ & $h$\\
 \hline
 $e$   &     &$h$&   -2$e$\\
$f$&     &   &2$f$\\
$h$& &  &   \\
 \hline
 \hline
\end{tabular}
\caption{$\Sl(2,F)$}
\end{table}

\begin{table}[h]
\centering
\begin{tabular}{ c | c  c c c c  }
\hline
 \hline
  $$   & $e$& $f$ & $h$ & $u$ & $v$\\
 \hline
 $e$   & &$h$   & $-2e$&$u$& $-v$  \\
$f$&     & & 2$f$ & 0 & $u$ \\
$h$&&  &&$v$  & 0  \\
$u$&&&&&0 \\
$v$&&&&&\\
 \hline
 \hline
\end{tabular}
\caption{$L=Fe\oplus Fh\oplus Ff\oplus Fu\oplus Fv$}
\end{table}

\begin{table}[h]
\centering
\begin{tabular}{ c | c  c   }
\hline
 \hline
  $$   & $h$ & $x$ \\
 \hline
 $h$   &     &$x$\\
$x$     &    &\\
 \hline
 \hline
\end{tabular}
\caption{$L=Fh\oplus Fx$}
\end{table}

\begin{table}[H]
\centering
\begin{tabular}{ c | c  c c c   }
\hline
 \hline
  $$   & $t$& $x$ & $y$ & $z$ \\
 \hline
 $t$   & &$x$   & $-y$&0\\
$x$&     & & $z$ & 0 \\
$y$&&  &&0   \\
$z$&&&&\\
 \hline
 \hline
\end{tabular}
\caption{$L=Ft\oplus Fx\oplus Fy\oplus Fz$}
\end{table}

\begin{table}[h]
\centering
\begin{tabular}{ c | c  c c  }
\hline
 \hline
  $$   & $h$ & $x$ & $y$\\
 \hline
 $h$   &     &$x$&   $\alpha$\\
$x$&     &   &0\\
$y$& &  &   \\
 \hline
 \hline
\end{tabular}
\caption{$L=Fh\oplus Fx\oplus Fy$}
\end{table}

\begin{table}[h]
\centering
\begin{tabular}{ c | c  c c  }
\hline
 \hline
  $$   & $e$ & $f$ & $h$\\
 \hline
 $e$   &     &$h$&   $e$\\
$f$&     &   &$f$\\
$h$& &  &   \\
 \hline
 \hline
\end{tabular}
\caption{$\fsl(2,F)$}
\end{table}

\begin{table}[h]
\centering
\begin{tabular}{ c | c  c c c c  }
\hline
 \hline
  $$   & $e$& $f$ & $h$ & $u$ & $v$\\
 \hline
 $e$   & &$h$   & $e$&0& $f$  \\
$f$&     & & $f$ & $e$ & 0 \\
$h$&&  &&0  & 0  \\
$u$&&&&&0 \\
$v$&&&&&\\
 \hline
 \hline
\end{tabular}
\caption{The minimal $p$-envelope of $\fsl(2,F)$}
\end{table}

\newpage
\printindex

\end{document}